\newtheorem{theoremalph}{Theorem}
\newtheorem{corollary-main}[theoremalph]{Corollary}
\newtheorem{Theorem}{Theorem}[section]
\newtheorem*{Theorem A}{Theorem A}
\newtheorem*{Theorem A'}{Theorem A'}
\newtheorem*{Conj*}{Conjecture}
\newtheorem{Definition}[Theorem]{Definition}
\newtheorem{Proposition}[Theorem]{Proposition}
\newtheorem{Lemma}[Theorem]{Lemma}
\newtheorem*{Notation}{Notation}
\newtheorem*{Remark}{Remark}
\newtheorem{Remark-numbered}[Theorem]{Remark}
\newtheorem{Remarks-numbered}[Theorem]{Remarks}
\newtheorem{Corollary}[Theorem]{Corollary}
\newtheorem*{Claim}{Claim}
\newtheorem{claim}[Theorem]{Claim}
\newtheorem{Claim-numbered}{Claim}
\newtheorem*{Acknowledgements}{Acknowledgements}
 \def\BB{{\mathbb B}} 
\def\DD{{\mathbb D}}
  \def\GG{{\mathbb G}}
 \def\JJ{{\mathbb J}} 
\def\LL{{\mathbb L}}
 \def\NN{{\mathbb N}} 
\def\PP{{\mathbb P}}
 \def\RR{{\mathbb R}}
 \def\ZZ{{\mathbb Z}}
\def\cA{{\cal A}}  \def\cG{{\cal G}}
\def\cF{{\cal F}}   \def\cR{{\cal R}}
\def\dim{\operatorname{dim}}
\renewcommand{\l@section}{\@dottedtocline{2}{3.8em}{3.2em}}
\renewcommand{\l@subsection}{\@dottedtocline{3}{3.8em}{3.2em}}
\newcommand{\subsectionruninhead}{\@startsection{subsection}{2}{0mm}{-\baselineskip}{-0mm}{\bf\large}}
\newcommand{\subsubsectionruninhead}{\@startsection{subsubsection}{3}{0mm}{-\baselineskip}{-0mm}{\bf\normalsize}}
\begin{document}

\title{On the abundance of SRB measures}

\author{Yongluo Cao   \and Zeya Mi \and Dawei Yang\footnote{Y. Cao was partially supported by NSFC (11771317,11790274), Science and Technology Commission of Shanghai Municipality (18dz22710000). Z. Mi was  supported by The Startup Foundation for Introducing Talent of NUIST(Grant No. 2017r070). D.Yang  was partially supported by NSFC 11671288 and NSFC 11790274.}}

\date{\today}

\maketitle

\begin{abstract}
We prove the abundance of Sinai-Ruelle-Bowen measures for diffeomorphisms away from ones with a homoclinic tangency. This is motivated by conjectures of Palis on the existence of physical (Sinai-Ruelle-Bowen) measures for global dynamics. The main novelty in this paper is that we have to deeply study Gibbs $cu$-states in different levels. Note that we have to use random perturbations to give some upper bound of the level of Gibbs $cu$-states.
\end{abstract}

\tableofcontents

\section{Introduction}

The SRB theory was established by Sinai, Ruelle and Bowen in the last seventies to characterize chaotic properties of hyperbolic dynamics in a statistical way \cite{Sin72, Rue76, Bow75, BoR75}. It is a completely beautiful description such that after them, dynamicists want to use similar philosophy to understand dynamics beyond uniform hyperbolicity. In this work, we study the abundance of SRB measures for a large class of diffeomorphisms. This is related to the Palis program for physical (SRB) measures. 

The program of Palis \cite[Page 493]{Pal05} is to characterize global dynamics. As mentioned by Jean-Christophe Yoccoz \cite{Yoc94}: ``Boardly speaking, the goal of the theory of dynamical systems is, as it should be, to understand {\it most} of the dynamics of {\it most} systems''. In \cite[Section 2]{Pal05}, Palis has conjectured that most dissipative diffeomorphisms have finitely many physical (SRB) measures whose basins cover full Lebesgue measure set in the ambient manifold. See also \cite[Page 500]{ShW00}.

There are several definitions of SRB measures from different aspects of interests. We take the one as in Ruelle \cite[Page 8]{Rue96}.
\begin{Definition}\label{Def:SRB}
For a $C^1$ diffeomorphism $f$, an invariant measure $\mu$ of $f$ is said to satisfy the \emph{Pesin's entropy formula} if either $\mu$ has no positive Lyapunov exponents, or it has positive Lyapunov exponents and the entropy of $\mu$  equals to the integral of the sum of positive Lyapunov exponents of $\mu$; an invariant measure $\mu$ is an \emph{Sinai-Ruelle-Bowen measure} if it satisfies the Pesin's entropy formula and has positive metric entropy.
\end{Definition}

The SRB measures in Definition~\ref{Def:SRB} may not be physical. However, in many cases, for example in the setting of Theorem~\ref{maintheorem}, a physical measure is an SRB measure as in Definition~\ref{Def:SRB}. The two notions are very related, and some relationship was studied by Tsujii \cite{Tsu91}.

SRB measures are usually obtained for systems with some hyperbolicity. Newhouse phenomenon \cite{New70,New74,New79}, which is very related to a homoclinic tangency of a hyperbolic periodic orbit, can prevent global hyperbolicity in some robust way. A diffeomorphism $f$ is said to \emph{have a homoclinic tangency} if $f$ has a hyperbolic periodic orbit, whose stable manifolds and unstable manifolds have some non-transverse intersection. Homoclinic tangencies are usually involved in the conjectures of Palis, see \cite{Pal05,PuS00,CrP15,CSY15} for a partial list of references. Let ${\rm Diff}^r(M)$ be the space of $C^r$ diffeomorphisms of $M$. Our main theorem is the following:

\begin{theoremalph}\label{main}
In ${\rm Diff}^1(M)$, any diffeomorphism can be accumulated by one of the following three classes:
\begin{itemize}
\item [---]diffeomorphisms with a homoclinic  tangency;
\item [---]essentially Mores-Smale diffeomorphisms (there exist finitely many sinks such that the union of the basins of these sinks is an open dense set in $M$);
\item [---]diffeomorphisms with SRB measures.
\end{itemize}
\end{theoremalph}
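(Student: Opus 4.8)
\medskip
\noindent\emph{Proof strategy.} The plan is to isolate the ``far from homoclinic tangencies'' regime and then build an SRB measure on a quasi-attractor by a Gibbs $cu$-state construction. Let $\mathcal T\subset\diff^1(M)$ be the closure of the set of diffeomorphisms having a homoclinic tangency and $\mathcal U=\diff^1(M)\setminus\mathcal T$ its open complement. Every $f\in\mathcal T$ is accumulated by the first class, so Theorem~\ref{main} is equivalent to the assertion that the essentially Morse--Smale diffeomorphisms together with the diffeomorphisms carrying an SRB measure are dense in $\mathcal U$. As the $C^1$-generic diffeomorphisms of $\mathcal U$ are dense in it, it suffices --- and I would aim at this formally stronger statement --- to prove that a $C^1$-generic $g\in\mathcal U$ is essentially Morse--Smale or possesses an SRB measure.

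First I would run the generic reduction coming from the $C^1$ theory of diffeomorphisms far from homoclinic tangencies. For $C^1$-generic $g\in\mathcal U$ there are finitely many sinks; if the union of their basins is dense, $g$ is essentially Morse--Smale and we are done, so assume it is not, in which case $g$ has a nontrivial quasi-attractor $\Lambda$ which is a homoclinic class. Since $\Lambda$ is Lyapunov stable, the unstable manifold of every periodic orbit contained in $\Lambda$ lies in $\Lambda$, so $\Lambda$ carries unstable manifolds of positive dimension; and since $g$ is far from homoclinic tangencies, perturbations of the tangent cocycle (Ma\~n\'e/Gourmelon) cannot create a tangency, which forces $\Lambda$ to admit a partially hyperbolic splitting $T_\Lambda M=E^{ss}\oplus E^c_1\oplus\cdots\oplus E^c_k\oplus E^{uu}$ with $E^{ss}$ uniformly contracting (possibly trivial), one-dimensional center bundles $E^c_j$, and $E^{uu}$ uniformly expanding of dimension $\ge1$. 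The problem is thereby reduced to a purely ergodic one: construct an SRB measure on a quasi-attractor carrying such a flag of center bundles over a uniformly expanding strong-unstable bundle.

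The core is the study of Gibbs $cu$-states at different levels. For $0\le i\le k$ put $E^{cu}_i:=E^c_{k-i+1}\oplus\cdots\oplus E^c_k\oplus E^{uu}$, so that $E^{cu}_0=E^{uu}$ and $E^{cu}_k$ is the whole center-unstable bundle, and call an invariant probability $\mu$ on $\Lambda$ a \emph{Gibbs $cu$-state of level $i$} if all Lyapunov exponents of $\mu$ along $E^{cu}_i$ are positive and the conditional measures of $\mu$ on the Pesin unstable plaques tangent to $E^{cu}_i$ are absolutely continuous. For $i=0$ these are the classical Gibbs $u$-states of the uniformly expanding bundle $E^{uu}$, which exist and are carried by $\Lambda$. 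The key observation is that if $\mu$ is a Gibbs $cu$-state of level $i$ having \emph{no} positive Lyapunov exponent outside $E^{cu}_i$ --- by domination it suffices to check the center bundle $E^c_{k-i}$ sitting just below --- then $\mu$ is an SRB measure: by the Ledrappier--Young formula $h_\mu(g)=\int\log|\det Dg|_{E^{cu}_i}|\,d\mu=\int\sum_{\lambda_j(x)>0}\lambda_j(x)\,d\mu(x)$, which is the equality case of Ruelle's inequality, so $\mu$ satisfies Pesin's entropy formula; and $h_\mu(g)>0$ since $E^{uu}\subseteq E^{cu}_i$ is uniformly expanding. Everything thus reduces to producing a single Gibbs $cu$-state of this ``optimal'' type.

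I expect this last step to be the main obstacle, and it is where random perturbations enter. If the level-$0$ Gibbs $u$-state $\mu_0$ has no positive exponent along $E^c_k$ we are already done; otherwise one wants a Gibbs $cu$-state of level $1$, but $E^{uu}\oplus E^c_k$ need not be uniformly expanding, so the classical construction (averaging forward iterates of Lebesgue measure on unstable plaques) is unavailable. Instead I would add a small absolutely continuous random perturbation of amplitude $\e$, take stationary measures $\mu_\e$ --- whose disintegrations are partially absolutely continuous for free --- and pass to a zero-noise limit $\mu_*=\lim_{\e\to0}\mu_\e$. The technical heart, and the hardest part, is to show that $\mu_*$ is again a Gibbs $cu$-state whose level does not exceed its number of positive center Lyapunov exponents: this needs a $C^1$ substitute for the Ledrappier--Young/Pesin machinery (existence of Pesin unstable plaques tangent to $E^{cu}_i$, absolute continuity of the associated holonomies, Rokhlin disintegrations), exploiting the partial hyperbolicity to compensate for the mere $C^1$ regularity, together with an entropy-production estimate for the noisy cocycle guaranteeing that absolute continuity of conditionals is inherited in the zero-noise limit precisely along the directions that stay expanded --- this is what ``random perturbations bound the level'' means. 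One then obtains a Gibbs $cu$-state with no positive exponent outside its center-unstable bundle (if necessary by iterating the construction at most $k$ times, the level increasing at each step), hence an SRB measure, which completes the argument. In outline: (i) the generic reduction far from tangencies, largely citable; (ii) the $C^1$ theory of Gibbs $cu$-states and the implication ``no extra positive exponent $\Rightarrow$ SRB'', careful but of standard type; (iii) the random-perturbation construction controlling the level --- the genuinely new ingredient and where I expect the real work.
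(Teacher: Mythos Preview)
Your outline captures the right ingredients --- the generic reduction far from tangencies, the partially hyperbolic splitting on a nontrivial quasi-attractor with one-dimensional centers and nontrivial strong-unstable bundle, Gibbs $cu$-states at various levels, and random perturbations to bound the level --- and these are indeed the components of the paper's argument. However, there is a genuine gap in your reduction step. You aim at the ``formally stronger statement'' that a $C^1$-generic $g\in\mathcal U$ \emph{itself} carries an SRB measure, and you correctly flag that this would require ``a $C^1$ substitute for the Ledrappier--Young/Pesin machinery''. No such substitute is available: Pesin unstable manifolds for the center directions, absolute continuity of the associated laminations, the density formula for conditionals, and the entropy formula all rely on $C^{1+\alpha}$ regularity, and partial hyperbolicity with one-dimensional centers does not compensate for this. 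The paper does \emph{not} prove your stronger statement. Instead, once the generic $f$ has a nontrivial quasi-attractor with the required splitting, one uses robustness of dominated splittings (and the fact that a quasi-attractor sits inside attracting neighborhoods) to find a $C^2$ diffeomorphism $g$ arbitrarily $C^1$-close to $f$ with an attracting set $\Lambda_g$ carrying the same type of splitting, and then applies Theorem~\ref{maintheorem} --- which is a $C^2$ statement --- to $g$. Thus $f$ is accumulated by $C^2$ diffeomorphisms with SRB measures, which is all Theorem~\ref{main} asks. Your entire step (iii) should therefore be carried out in the $C^2$ category, where the Ledrappier--Young and Pesin tools are available as stated.

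A secondary remark on step (iii) itself: your plan suggests iterating the random-perturbation construction up to $k$ times, raising the level by one at each stage. The paper's mechanism is different and somewhat cleaner. A single zero-noise limit $\mu$ of ergodic stationary measures is shown (Theorem~\ref{Thm:random-limit-G}) either to have an SRB ergodic component or to lie in one of the ``boundary'' sets $\cG_i^0$: Gibbs $cu$-states of level $i$ all of whose ergodic components have $\lambda^c_{i+1}>0$, but with infimum zero over those components. One then takes the \emph{minimal} $i$ with $\cG_i^0\neq\emptyset$ and, by a purely deterministic limit argument along ergodic components $\nu_n$ with $\lambda^c_{i+1}(\nu_n)\to 0$, extracts an SRB measure (Theorem~\ref{Thm:minimal-obtain-srb}); the minimality of $i$, together with the criterion Theorem~\ref{Thm:criterion-in-i}, is precisely what forces the limit measure to stay at level $i$ rather than drop. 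So the random perturbation is used only once, to guarantee $\cG_i^0\neq\emptyset$ for some $i$, and no iteration is needed.
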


Note that the measure supported on a sink satisfies the Pesin's entropy formula automatically, one has the following corollary:

\begin{corollary-main}\label{entropy-formula}
In ${\rm Diff}^1(M)$, any diffeomorphism can be accumulated by one of the following two classes:
\begin{itemize}
\item [---]diffeomorphisms with a homoclinic  tangency;
\item [---]diffeomorphisms with measures satisfying the Pesin's entropy formula.
\end{itemize}
\end{corollary-main}

For understanding diffeomorphisms away from ones with a homoclinic tangency,  one has to consider a weak form of hyperbolicity, which is called a ``dominated splitting". Let $\Lambda$ be a compact invariant set of a $C^1$ diffeomorphism $f$. For two $Df$-invariant bundles $E,F\subset TM|_{\Lambda}$, we say that \emph{$E$ dominates $F$} or \emph{$F$ is dominated by $E$} if there are constants $C>0$ and $\lambda\in(0,1)$ such that for any point $x\in\Lambda$, we have $\|Df^n|_{F(x)}\|.\|Df^{-n}|_{E(f^n(x))}\|\le C\lambda^n$. Denote the fact that $E$ dominates $F$ by $E\oplus_{\succ} F$. We  say that a compact invariant set $\Lambda$ admits a dominated splitting if there is a $Df$-invariant splitting $TM|_\Lambda=E\oplus_\succ F$ such that $E$ dominates $F$.

For a compact invariant set $\Lambda$, a $Df$-invariant bundle $F$ is \emph{contracted} (by $Df$) if there are constants $C>0$ and $\lambda\in(0,1)$ such that for any point $x$, we have $\|Df^n|_{F(x)}\|\le C\lambda^n$; a $Df$-invariant bundle $F$ is \emph{expanded} (by $Df$) if it is contracted for $f^{-1}$. We say a compact invariant set $\Lambda$ is \emph{partially hyperbolic} if there is a $Df$-invariant splitting $TM|_\Lambda=E^u\oplus_{\succ} E_1^c\oplus_{\succ} \cdots \oplus_{\succ} E_k^c\oplus_{\succ} E^s$ such that $E^u$ is expanded and $E^s$ is contracted. Among partially hyperbolic dynamics, we are more interested in a special type: one requires that each center bundle is one-dimensional. A diffeomorphism $f$ is \emph{partially hyperbolic} if the chain recurrence set of $f$ can be split into finite compact invariant sets such that each set admits a partially hyperbolic splitting whose center bundles are one-dimensional. It has been proved by Crovisier, Sambarino and Yang \cite{CSY15} that any diffeomorphism can be either accumulated by ones with a homoclinc tangency, or accumulated by partially hyperbolic diffeomorphisms.

We will manage to prove the existence of Sinai-Ruelle-Bowen measures on a partially hyperbolic attracting set with one-dimensional dominated center bundles of a $C^2$ diffeomorphism. Note that a compact invariant set $\Lambda$ is \emph{attracting} if there is a neighborhood $U$ of $\Lambda$ {such that $f(\overline{U})\subset U$ and }$\cap_{n\in\NN}f^n(U)=\Lambda$.

\begin{theoremalph}\label{maintheorem}
Assume that $\Lambda$ is an attracting set of a $C^2$ diffeomorphism $f$. If $\Lambda$ admits a partially hyperbolic splitting  $TM|_{\Lambda}=E^u\oplus_{\succ} E_1^c\oplus_{\succ} \cdots \oplus_{\succ} E_k^c\oplus_{\succ} E^s$, where ${\rm dim}E_i^c=1$, for every $1\le i\le k$, $k\ge 1$, then there exists some ergodic SRB measure supported on $\Lambda$.
\end{theoremalph}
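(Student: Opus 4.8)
The plan is to construct an SRB measure on $\Lambda$ as a so-called Gibbs $cu$-state, i.e.\ an invariant measure whose conditional measures along strong-unstable manifolds are absolutely continuous with respect to the Lebesgue measure on those manifolds, and then to verify that such a measure has positive entropy and satisfies Pesin's entropy formula. First I would fix a strong-unstable disk $D$ tangent to $E^u$ inside the attracting neighborhood $U$, push it forward by $f$, and average the normalized Lebesgue measure on $f^n(D)$ over time. Because $f(\overline U)\subset U$ and $E^u$ is uniformly expanded, the forward iterates $f^n(D)$ are again strong-unstable disks whose internal geometry (curvature, size, density of the pushed-forward Lebesgue measure) is controlled by the classical bounded-distortion estimate for $C^2$ maps along the expanding bundle. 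Any weak-$*$ limit $\mu$ of the Cesàro averages is then an $f$-invariant probability measure supported on $\Lambda$ whose disintegration along strong-unstable plaques is absolutely continuous, with densities bounded above and below; this is the standard Gibbs $cu$-state construction going back to Pesin--Sinai and Bonatti--Viana, and it uses only the uniform expansion of $E^u$, not any property of the one-dimensional centers.

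The substantive part is to show that this Gibbs $cu$-state $\mu$ — or at least one of its ergodic components — is an SRB measure in the sense of Definition~\ref{Def:SRB}, namely that it has positive entropy and realizes Pesin's entropy formula. Absolute continuity along $E^u$ already forces the entropy to be at least the integral of $\sum_{i}\log\|Df|_{E^u}\|$, which is strictly positive since $E^u$ is uniformly expanded and nontrivial (here $k\ge 1$ is what guarantees $E^u$ is a genuine, nonzero bundle after splitting off $E^u$ from the rest); so positivity of entropy is automatic. For the entropy formula one must rule out that the center exponents along $E_1^c,\dots,E_k^c$ contribute extra unstable directions that are not "seen" by the disintegration. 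The mechanism is a dichotomy on each center bundle, level by level from $E_1^c$ down to $E_k^c$: either the integrated center exponent $\int \log\|Df|_{E_i^c}\|\,d\mu$ is nonpositive for the ergodic component under consideration — in which case that center direction carries no positive exponent and the Ledrappier--Young / Ledrappier--Strelcyn machinery gives the entropy formula with the unstable bundle being exactly $E^u$ — or it is positive, in which case one must upgrade the Gibbs $cu$-state to have absolute continuity also along $E^u\oplus E_1^c\oplus\cdots\oplus E_i^c$, i.e.\ pass to a higher-level Gibbs $cu$-state. Iterating this, the ergodic component becomes a Gibbs $cu$-state of some maximal level $i_0$, its conditionals are absolutely continuous along the full unstable manifold (dimension $\dim E^u + i_0$), the remaining center exponents $\int\log\|Df|_{E_{i_0+1}^c}\|,\dots$ are all $\le 0$, and then Pesin's entropy formula holds by the Ledrappier--Young characterization; taking an ergodic component preserves all of this and yields an \emph{ergodic} SRB measure.

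The main obstacle — and, judging from the abstract, the real novelty — is controlling the \emph{level} of the Gibbs $cu$-state, i.e.\ showing the upgrade procedure terminates and that at the terminal level the center exponents really are nonpositive rather than oscillating in sign across different parts of the measure. Naively, a Gibbs $cu$-state that is "a.c.\ along $E^u$" could have an ergodic component with a positive $E_1^c$-exponent without being "a.c.\ along $E^u\oplus E_1^c$", and there is no free lunch that converts positivity of an integrated exponent into absolute continuity of conditionals. The strategy I would use to break this, following the hint in the abstract, is random perturbations: one considers the random dynamical system obtained by composing $f$ with small i.i.d.\ perturbations, whose stationary measures are automatically absolutely continuous-type objects along \emph{all} expanding directions (random SRB theory à la Ledrappier--Young and Liu--Qian), and then takes the zero-noise limit. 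The zero-noise limit of the stationary measures is an invariant measure of $f$ whose level as a Gibbs $cu$-state is at least as large as the number of a.s.-positive random center exponents; conversely, a semicontinuity argument bounds the random exponents in terms of the deterministic ones near $f$. Matching these bounds pins down the level and forces the leftover center exponents to be $\le 0$ on every ergodic component, which is exactly what the entropy formula needs. Making the zero-noise limit and the exponent-semicontinuity estimates precise — in particular keeping absolute continuity of conditionals in the limit, which is not weak-$*$ closed in general — is where the hard technical work lies, and is presumably the bulk of the paper.
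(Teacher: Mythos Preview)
Your broad architecture---hierarchy of Gibbs $cu$-states indexed by how many center bundles are absorbed into the ``unstable'' part, random perturbations to control the level, Ledrappier--Young to certify the entropy formula---matches the paper's. Two differences are worth noting. First, the paper does not start from a Pesin--Sinai $u$-disk construction at all; the initial measure is the zero-noise limit of ergodic stationary measures of a regular random perturbation (a ``randomly ergodic limit''), and random SRB theory (Liu--Qian) is what makes that limit a Gibbs $E$-state at the correct level. Your first two paragraphs are therefore essentially discarded by the actual proof. Second, and minor: $k\ge 1$ does not guarantee $E^u$ is nontrivial; that nontriviality is an extra input in the application to Theorem~A, not a consequence of the hypotheses of Theorem~B.

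The substantive gap is in your ``upgrade'' dichotomy. Positivity of the \emph{integrated} exponent $\int\log\|Df|_{E_{i+1}^c}\|\,d\mu$ does not let you pass from $\cG_i$ to $\cG_{i+1}$; what is needed (the paper's Theorem~\ref{Thm:criterion-in-i}) is a \emph{uniform} lower bound $\lambda_{i+1}^c(\nu)\ge\alpha>0$ over \emph{all} ergodic components $\nu$. The obstruction is precisely the case where every component has $\lambda_{i+1}^c(\nu)>0$ but $\inf_\nu\lambda_{i+1}^c(\nu)=0$; the paper isolates this as the set $\cG_i^0$. Random perturbations do not eliminate this case: the zero-noise limit can land in $\cG_i^0$ rather than directly producing an SRB component (this is the content of Theorem~\ref{Thm:random-limit-G}). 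The missing idea is the paper's deterministic \emph{minimal-$i$} argument (Theorem~\ref{Thm:minimal-obtain-srb}): take the smallest $i$ with $\cG_i^0\neq\emptyset$, pick ergodic components $\nu_n$ of some $\mu\in\cG_i^0$ with $\lambda_{i+1}^c(\nu_n)\to 0$, and pass to $\nu=\lim\nu_n$. Minimality of $i$ combined with Theorem~\ref{Thm:criterion-in-i} forces $I(\nu)=i$, and since $\int\log\|Df|_{E_{i+1}^c}\|\,d\nu=0$, some ergodic component of $\nu$ has nonpositive $(i{+}1)$-st center exponent and is SRB. Your sentence ``matching these bounds pins down the level and forces the leftover center exponents to be $\le 0$ on every ergodic component'' is exactly where this two-step mechanism (random to reach $\cG_i^0$, then deterministic minimal-$i$ to extract SRB) is being compressed into a single step that does not go through as stated.
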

The proof of Theorem~\ref{main} is mainly based on Theorem~\ref{maintheorem}.
The main tool to prove Theorem~\ref{maintheorem} is to study Gibbs $cu$-states. Gibbs $u$-states were defined and studied for partially hyperbolic attractors from Pesin and Sinai \cite{PeS82}. It turns out that Gibbs $u$-states have many good properties \cite{PeS82,BDV05}. In contrast to Gibbs $u$-states, Gibbs $cu$-states are defined in the non-uniform case, thus lose some compact property. Moreover, in Theorem~\ref{maintheorem}, there are many center sub-bundles. We have to study Gibbs $cu$-states in different levels. We remark that we have to use random perturbation to give some upper bound of the level of some Gibbs $cu$-states.

Note that the case $k=1$ of Theorem~\ref{maintheorem} has been proved in \cite{CoY05} by using random perturbation and the entropy formula. Liu and Lu \cite{LiL15} obtained SRB measures in a similar philosophy as in \cite{CoY05}.

\begin{Acknowledgements}

We are grateful to J. Buzzi, S. Crovisier, S. Gan, H. Hu, P. Liu, L. Wen, X. Wen and J. Xie for their suggestions and discussions. J. Buzzi and S. Crovisier helped us to check and improve the proof carefully.

\end{Acknowledgements}

\section{Typical dynamics in the $C^1$ topology}

In this section, we will manage to prove Theorem~\ref{main} by using Theorem~\ref{maintheorem}.  Usually one can obtain  SRB measures on some sets with attracting properties. Chain transitivity is a weak form of recurrence. A compact invariant set $\Lambda$ of $f$ is \emph{chain transitive}, if for any $\varepsilon>0$, for any $x,y\in\Lambda$, there are points $x=x_0,x_1,\cdots,x_n=y$ such that $d(f(x_i),x_{i+1})<\varepsilon$ for any $0\le i\le n-1$.
A chain-transitive set $\Lambda$ is a \emph{quasi attractor} if there is a decreasing sequence of attracting set $\{\Lambda_n\}$ such that $\Lambda=\lim_{n\to\infty}\Lambda_n$. For generic diffeomorphisms, we have the following result for quasi attractors, see \cite[Proposition 1.7]{BoC04} and \cite{MoP02}.

\begin{Lemma}\label{Lem:quasi-attractor}
There is a dense $G_\delta$ set $\cR\subset{\rm Diff}^1(M)$ such that for any $f\in\cR$, there is a residual set $R\subset M$ such that for any $x\in R$, the omega-limit set of $x$ w.r.t. $f$ is a quasi attractor.

\end{Lemma}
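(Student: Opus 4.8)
The plan is to derive Lemma~\ref{Lem:quasi-attractor} from two well-known generic properties of $C^1$ diffeomorphisms and a Baire-category argument. First I would recall that there is a dense $G_\delta$ set $\cR_1\subset{\rm Diff}^1(M)$ on which every chain-recurrence class that is a quasi attractor is exactly an intersection of a nested sequence of attracting sets; more importantly, by Bonatti--Crovisier (\cite{BoC04}, Proposition~1.7) and Morales--Pacifico \cite{MoP02}, there is a dense $G_\delta$ set $\cR_2$ such that for $f\in\cR_2$ the quasi attractors of $f$ are precisely the minimal elements (with respect to the partial order given by the relation ``is chained to'') among the chain-recurrence classes, and every chain-recurrence class $C$ is ``attracted to'' some quasi attractor, meaning there is a pseudo-orbit from $C$ into an arbitrarily small neighborhood of a quasi attractor. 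I would set $\cR=\cR_1\cap\cR_2$, still a dense $G_\delta$.

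Next I would fix $f\in\cR$ and produce the residual subset $R\subset M$. The key topological fact is that for a generic $f$ the omega-limit set $\omega(x)$ of a point $x$ is always a chain-transitive set, indeed a chain-recurrence class, and it depends ``upper semicontinuously'' on $x$ in the Hausdorff topology on a residual set of $x$'s. Let $\{\Lambda_j\}_{j\in\NN}$ be an enumeration (it is countable because the $\Lambda_j$ can be taken from a countable basis of attracting sets) of the quasi attractors of $f$, each $\Lambda_j$ realized as $\bigcap_n U_{j,n}$ for a decreasing sequence of attracting neighborhoods $U_{j,n}$ with $f(\overline{U_{j,n}})\subset U_{j,n}$. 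For each $j$ and each $n$, the set $\{x\in M: \omega(x)\subset U_{j,n}\}$ contains the open set $\bigcup_{m\ge 0} f^{-m}(U_{j,n})$ (a point entering a forward-invariant neighborhood has its whole forward orbit, hence its omega-limit set, trapped inside), so $W^s(\Lambda_j):=\{x:\omega(x)\subset\Lambda_j\}=\bigcap_n\bigcup_m f^{-m}(U_{j,n})$ is a $G_\delta$ set. I would then argue that $\bigcup_j W^s(\Lambda_j)$ is residual in $M$: its complement is the set of points whose omega-limit set meets infinitely many distinct classes or lands in a class that is not a quasi attractor, and using the genericity of ``every chain class is attracted to a quasi attractor'' together with the fact that $\omega(x)$ is itself a chain class for a residual set of $x$, one shows the complement is meager. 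Intersecting with the residual set on which $\omega(x)$ is a chain-transitive class gives the desired residual $R$, on which $\omega(x)$ is a quasi attractor.

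The main obstacle is the last step: controlling, in a Baire-category sense, the set of points whose omega-limit set fails to be a quasi attractor. Pointwise, $\omega(x)$ could a priori be a chain class that is not minimal for the attracting order. The resolution is the generic statement (this is exactly the content imported from \cite{BoC04} and \cite{MoP02}) that the union of the basins $W^s(\Lambda_j)$ of the quasi attractors is dense in $M$ and, being $G_\delta$ by the argument above, therefore residual; combined with the residual set where $\omega(x)$ is a chain class and the observation that $\omega(x)\subset\Lambda_j$ forces $\omega(x)$ to \emph{be} $\Lambda_j$ when $\Lambda_j$ is a minimal (quasi-attracting) class and $\omega(x)$ is chain transitive and internally chain transitive, this closes the argument. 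I would phrase the density of $\bigcup_j W^s(\Lambda_j)$ as the substantive input cited from the literature, so that the proof here is just the assembly of these pieces into a single residual set $R$ working for the given generic $f$, and then quantifying over $f\in\cR$.
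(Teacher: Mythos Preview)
The paper does not give its own proof of this lemma: it is stated as a known fact with the citation ``see \cite[Proposition 1.7]{BoC04} and \cite{MoP02}'' and no argument is supplied. So there is nothing in the paper to compare your sketch against beyond noting that the authors treat the result as imported wholesale from the literature.

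That said, your sketch has a genuine gap. You enumerate the quasi attractors of $f$ as a countable family $\{\Lambda_j\}_{j\in\NN}$, justifying this by saying they ``can be taken from a countable basis of attracting sets.'' This is not correct: for a $C^1$-generic diffeomorphism the set of chain-recurrence classes, and in particular the set of quasi attractors, can be uncountable. Your subsequent argument uses the countability when you form $\bigcup_j W^s(\Lambda_j)$ and claim it is residual as a countable union of $G_\delta$ sets; with uncountably many $\Lambda_j$ this union need not even be Borel, and the Baire argument collapses. The final implication ``$\omega(x)\subset\Lambda_j$ forces $\omega(x)=\Lambda_j$'' is fine once you know $\omega(x)$ is a chain-recurrence class, but you never reach that point soundly.

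The route actually taken in the cited references avoids any enumeration of quasi attractors. Morales--Pacifico use a semicontinuity argument: the map $x\mapsto\overline{\{f^n(x):n\ge 0\}}$ is lower semicontinuous in the Hausdorff topology, hence continuous on a residual set $R_0\subset M$, and they show that at any continuity point $x$ the set $\omega(x)$ is Lyapunov stable. Bonatti--Crovisier show that for generic $f$ and residually many $x$, $\omega(x)$ coincides with a chain-recurrence class. Intersecting the two residual sets, $\omega(x)$ is a Lyapunov-stable chain-recurrence class, which is exactly a quasi attractor. If you want to supply a proof rather than a citation, this is the line to follow; it sidesteps the countability issue entirely.
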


Crovisier, Sambarino and Yang \cite{CSY15} has proved that for generic diffeomorphisms away from ones with a homoclinic tangency, any chain recurrent class admits a partially hyperbolic splitting whose center bundle can be split into one-dimensional dominated sub-bundles. For quasi attractors, they have more precise information:

\begin{Theorem}\label{Thm:CSY-quasiattractor}
There is a dense $G_\delta$ set $\cR\subset {\rm Diff}^1(M)$ such that for any $f\in\cR$, if $f$ is away from ones with a homoclinic tangency, then for any quasi attractor $\Lambda$ of $f$, when $\Lambda$ is not reduced to be a single periodic orbit, we have that $\Lambda$ admits a partially hyperbolic splitting $TM|_{\Lambda}=E^u\oplus_{\succ} E_1^c\oplus_{\succ} \cdots \oplus_{\succ} E_k^c\oplus_{\succ} E^s$, where $E^u$ is non-trivial and ${\rm dim}E_i^c=1$, for every $1\le i\le k$.

\end{Theorem}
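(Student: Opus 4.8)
The plan is to derive this from the general structure theorem of Crovisier--Sambarino--Yang quoted above together with the Lyapunov stability enjoyed by quasi attractors; the only genuinely new point to establish is the non-triviality of $E^u$. First I would set $\cR$ to be the intersection of the dense $G_\delta$ set provided by that structure theorem with the one of Lemma~\ref{Lem:quasi-attractor}, fix $f\in\cR$ away from diffeomorphisms with a homoclinic tangency, and take a quasi attractor $\Lambda$ that is not reduced to a single periodic orbit. Since $\Lambda$ is a chain recurrence class, the structure theorem already yields a partially hyperbolic splitting $TM|_{\Lambda}=E^u\oplus_{\succ}E_1^c\oplus_{\succ}\cdots\oplus_{\succ}E_k^c\oplus_{\succ}E^s$ with $\dim E_i^c=1$, where $E^u$ (resp. $E^s$) denotes the extremal uniformly expanded (resp. contracted) bundle of the finest dominated splitting and may a priori be trivial.

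Next I would argue by contradiction that $E^u\neq 0$. If $E^u=0$ and moreover $k=0$, then $TM|_{\Lambda}=E^s$ is uniformly contracted, so $\Lambda$ is an attracting hyperbolic set with purely contracting tangent bundle, hence a single periodic sink --- excluded by hypothesis. Thus $k\geq 1$ and the top bundle of the finest dominated splitting is the one-dimensional $F:=E_k^c$, which is not uniformly expanded (otherwise it would be absorbed into $E^u$). Here I would invoke the central-model analysis of \cite{CSY15} attached to $F$ over $\Lambda$: being a decreasing limit of attracting sets, $\Lambda$ is Lyapunov stable, and this stability forbids the central model from escaping $\Lambda$, forcing its unstable-type strands to return to $\Lambda$. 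Combining this with the ergodic closing lemma --- which, starting from an invariant measure on $\Lambda$ with non-positive exponent along $F$, produces periodic orbits approaching $\Lambda$ with exponent along $F$ arbitrarily close to $0$ --- and with the $C^1$ connecting and selecting lemmas of \cite{CSY15}, one should be able to perturb such an orbit either into a homoclinic tangency, which is impossible because $f$ is away from diffeomorphisms with a homoclinic tangency and this is a robust condition, or into an attracting periodic orbit inside a trapping region of $\Lambda$, contradicting the Lyapunov stability of $\Lambda$ together with the hypothesis that $\Lambda$ is not a single periodic orbit. Either way $E^u\neq 0$.

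Once $E^u\neq 0$, the splitting $TM|_{\Lambda}=E^u\oplus_{\succ}E_1^c\oplus_{\succ}\cdots\oplus_{\succ}E_k^c\oplus_{\succ}E^s$ with $\dim E_i^c=1$ (where $k=0$ is allowed and corresponds to $\Lambda$ being a non-trivial hyperbolic attractor) is exactly the asserted one. The hard part lies entirely in the second paragraph: the dichotomy ``top bundle uniformly expanded versus exploitable weak periodic orbit'' and the ensuing perturbations are precisely the deep content of \cite{CSY15}, resting on central models and $C^1$ connecting-lemma technology; the Lyapunov stability of quasi attractors is the extra ingredient that converts a weak periodic orbit into a genuine contradiction with being away from homoclinic tangencies.
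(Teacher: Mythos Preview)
Your strategy is the same as the paper's second argument (the sketch via the techniques of \cite{CSY15}): get the splitting from \cite[Theorem~1.1]{CSY15}, assume $E^u=\{0\}$, and derive a contradiction from the Lyapunov stability of the quasi attractor. Two points need fixing.

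First, a notational slip: with the ordering $E^u\oplus_\succ E_1^c\oplus_\succ\cdots\oplus_\succ E_k^c\oplus_\succ E^s$, the extremal (most expanding) center bundle is $E_1^c$, not $E_k^c$; your $F$ should be $E_1^c$.

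Second, and more substantively, your final contradiction is not quite nailed down. Perturbing to create a sink inside a trapping region of $\Lambda$ happens for a nearby diffeomorphism $g$, and does not by itself contradict anything about $f$; even if one argues generically that $f$ already has such a sink, the mere presence of a sink near $\Lambda$ is compatible with $\Lambda$ being a non-trivial quasi attractor. The paper closes the argument differently: using \cite[Corollary~1.6]{CSY15} one has $\Lambda=H(p)$, and the index analysis of \cite{CSY15} gives (for generic $f$) a periodic point $q\in H(p)$ of unstable index $1$ whose unstable exponent along $E_1^c$ is weak enough that $W^u(q)$ meets the basin of a sink. Since the sink is its own chain class, it is not in $\Lambda$; hence $W^u(q)\not\subset\Lambda$, contradicting the basic fact that a quasi attractor contains the unstable set of each of its points. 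This is the clean way to use Lyapunov stability, and it is what your phrase ``forcing its unstable-type strands to return to $\Lambda$'' should become.

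It is also worth noting that the paper gives a shorter first route: combine \cite[Theorem~1.1]{CSY15} with Theorem~\ref{Thm:CPS-extremalbundle} (Crovisier--Pujals--Sambarino), which directly asserts that for generic $f$ the one-dimensional extremal bundle of any non-trivial chain-transitive set is uniformly expanded; this immediately gives $E^u\neq\{0\}$ without any further perturbative argument.
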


In fact, the main theorem of Crovisier, Pujals and Sambarino \cite{CPS17} gives some information of one-dimensional bundle in a dominated splitting.

\begin{Theorem}\label{Thm:CPS-extremalbundle}[Crovisier-Pujals-Sambarino]
There is a dense $G_\delta$ set $\cR\subset {\rm Diff}^1(M)$ such that for any $f\in\cR$, if a chain transitive set $\Lambda$ of $f$ admits a dominated splitting $TM|_{\Lambda}=E\oplus_\succ F$ satisfying $\dim E=1$, and if $\Lambda$ is not reduced to be a singular periodic orbit, then $E$ is uniformly expanded. Moreover, if $f$ cannot be accumulated by ones with a homoclinic tangency, then $f$ has only finitely many sinks and sources.
\end{Theorem}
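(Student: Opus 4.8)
The plan is to obtain both assertions from $C^1$-perturbation techniques together with a Baire-category argument, realizing the desired set $\cR$ as a countable intersection of open and dense subsets of $\diff^1(M)$. The perturbation toolkit I would use consists of Mañé's ergodic closing lemma, the Bonatti--Crovisier connecting lemma for pseudo-orbits, the generic convergence of the Lyapunov exponents of periodic orbits (Abdenur--Bonatti--Crovisier), Gourmelon's refinement of Franks' lemma (modifying the derivative along a periodic orbit without destroying a prescribed dominated splitting), and the weak-eigenvalue mechanism for creating tangencies (Wen; Gan--Wen): a non-hyperbolic periodic orbit, or a hyperbolic one with an eigenvalue close to the unit circle, lying in a non-trivial homoclinic class can be $C^1$-perturbed to exhibit a homoclinic tangency. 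For the first assertion one argues by contradiction: assume $f\in\cR$ possesses a chain transitive set $\Lambda$, not reduced to a periodic orbit, with a dominated splitting $TM|_{\Lambda}=E\oplus_\succ F$, $\dim E=1$, for which $E$ is not uniformly expanded, and the goal is to produce perturbations forbidden by genericity.

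The first step is non-perturbative. Since $E$ is one-dimensional, $\varphi(x)=\log\|Df|_{E(x)}\|$ is continuous and $\|Df^n|_{E(x)}\|=\exp\big(\varphi(x)+\cdots+\varphi(f^{n-1}x)\big)$, so by the standard ergodic-optimization duality for additive cocycles over a compact invariant set, $E$ is uniformly expanded over $\Lambda$ if and only if $\inf_\nu\int\varphi\,d\nu>0$, the infimum being over the $f$-invariant probabilities supported on $\Lambda$. Hence there is an ergodic $\mu$ on $\Lambda$ with $\int\varphi\,d\mu\le 0$. The domination $E\oplus_\succ F$ then forces, for $\mu$-almost every $x$, the top Lyapunov exponent of $Df$ along $F$ to be bounded above by $\int\varphi\,d\mu+\log\lambda<0$; thus $\mu$ has no positive Lyapunov exponent and at most one vanishing exponent, namely the one along $E$ when $\int\varphi\,d\mu=0$.

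The crux is to realize $\mu$ by periodic orbits while retaining control of the splitting. Combining ergodic closing with the connecting lemma for pseudo-orbits and the generic convergence of periodic exponents, one finds, in every $C^1$-neighbourhood of $f$, a diffeomorphism $g$ carrying a periodic orbit $\gamma$ whose orbit is Hausdorff-close to $\supp(\mu)\subseteq\Lambda$, over which the dominated splitting $E\oplus_\succ F$ persists (it is an open condition and passes to nearby orbits), with $F$ uniformly contracted along $\gamma$ and $\lambda_E(\gamma)$ arbitrarily close to $\int\varphi\,d\mu\le 0$. If $\int\varphi\,d\mu<0$, then $\gamma$ is a sink, and since $\Lambda$ is not a single periodic orbit its invariant measures --- and the periodic orbits approximating them --- are abundant, so, running the construction on pairwise disjoint orbits and applying Gourmelon's Franks lemma with disjoint supports, one obtains diffeomorphisms $C^1$-close to $f$ with arbitrarily many sinks. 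If instead $\int\varphi\,d\mu=0$, then $\gamma$ carries an eigenvalue arbitrarily close to the unit circle in its extremal direction while being contracted along $F$; because $\Lambda$ is non-trivial, a further connecting perturbation places $\gamma$ in a non-trivial homoclinic class, and Gourmelon's Franks lemma together with the Wen/Gan--Wen mechanism then generates a homoclinic tangency. Encoding ``no non-trivial chain transitive set carries a non-uniformly-expanded one-dimensional extremal dominated bundle'' as a countable intersection of open and dense conditions --- using robust, quantitative versions of the statement so that openness holds --- completes the argument; this packaging is where I expect the real difficulty to lie, together with the care needed to localize all perturbations near $\Lambda$, to preserve the dominated splitting through the closing and through Franks' lemma, and to make the supports of the several perturbations non-interfering. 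The exclusion of the singular-periodic-orbit case enters precisely here: it is the only situation in which the closing step returns $\Lambda$ itself rather than genuinely new periodic dynamics.

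For the ``Moreover'', assume additionally that $f\in\cR$ is not accumulated by diffeomorphisms with a homoclinic tangency --- an open condition, so one works on an open subset of $\cR$ --- and suppose $f$ had infinitely many sinks. By Kupka--Smale there are finitely many periodic orbits of each period, so after passing to a subsequence with periods $\pi_n\to\infty$ these sinks $\gamma_n$ accumulate, in the Hausdorff metric, on a chain transitive set $\Lambda$; $\Lambda$ is not a periodic orbit, since distinct sinks cannot accumulate on one. As $f$ lies away from homoclinic tangencies, $\Lambda$ is contained in a chain-recurrence class admitting a partially hyperbolic splitting whose center decomposes into one-dimensional dominated sub-bundles (Crovisier--Sambarino--Yang). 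If the strong unstable bundle of this class is trivial, its topmost bundle is a one-dimensional extremal dominated bundle, hence uniformly expanded by the first assertion; in all cases $\Lambda$ carries a $Df$-invariant sub-bundle $G$ that is uniformly expanded. But uniform expansion of $G$ over $\Lambda$ is witnessed by an expanding cone field on a neighbourhood of $\Lambda$, which --- since each $\gamma_n$ of large period eventually has its whole orbit in that neighbourhood --- is preserved and expanded by $Df^{\pi_n}$ along $\gamma_n$; hence $Df^{\pi_n}$ along $\gamma_n$ has an eigenvector in the cone with eigenvalue of modulus $>1$, contradicting that $\gamma_n$ is a sink. Therefore $f$ has only finitely many sinks, and applying the same reasoning to $f^{-1}$, whose homoclinic tangencies are exactly those of $f$, yields finitely many sources. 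The main obstacle throughout remains the perturbation constructions of the third paragraph and their assembly into a Baire argument; the ``Moreover'' is then a comparatively soft corollary.
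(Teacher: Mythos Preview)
The paper does not prove Theorem~\ref{Thm:CPS-extremalbundle}; it is quoted as a result of Crovisier, Pujals and Sambarino \cite{CPS17} (in preparation at the time), and only the statement is used, as input to Theorem~\ref{Thm:CSY-quasiattractor} and to the proof of Theorem~\ref{main}. There is therefore no proof in the paper against which to compare your proposal.

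That said, your argument for the first assertion contains a genuine gap. You correctly reach an ergodic measure $\mu$ on $\Lambda$ with $\int\varphi\,d\mu\le 0$ and all $F$-exponents negative, and you then produce, by closing and Franks--Gourmelon perturbations, diffeomorphisms $g$ that are $C^1$-close to $f$ and carry either arbitrarily many sinks or a homoclinic tangency. But neither conclusion contradicts $f\in\cR$ for any residual $\cR$ you can hope to construct: a $C^1$-generic diffeomorphism may very well be accumulated by diffeomorphisms with arbitrarily many sinks (this is precisely the Newhouse phenomenon), and the first assertion makes no hypothesis that $f$ be far from tangencies. Your proposed Baire packaging therefore has no density input --- you have shown that bad $f$'s are close to other bad diffeomorphisms, not that they form a meager set. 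The difficulty is not merely the ``packaging'' you flag; the perturbative dichotomy itself is pointed in the wrong direction.

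The mechanism that actually works operates inside the dynamics of the generic $f$, not via perturbations of $f$. For $C^1$-generic $f$ the ergodic closing of \cite{ABC11} yields hyperbolic periodic orbits of $f$ itself that converge to $\supp(\mu)\subset\Lambda$ with exponents converging to those of $\mu$, and (using the connecting lemma for pseudo-orbits) these orbits lie in the chain-recurrence class containing $\Lambda$. If their $E$-exponent is negative they are sinks sitting inside a non-trivial chain class, which is impossible since a sink is its own chain-recurrence class; the borderline case $\int\varphi\,d\mu=0$ is handled by showing that, generically, the approximating orbits cannot all have $E$-exponent bounded away from zero on the positive side (else $E$ would be uniformly expanded), and a weak periodic orbit in a non-trivial class again forces a contradiction with the generic structure of the class. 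This is where the actual work lies. Your treatment of the ``Moreover'' part is essentially correct once the first assertion is in hand.
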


Theorem~\ref{Thm:CSY-quasiattractor} can be deduced from Theorem~\ref{Thm:CPS-extremalbundle} and \cite[Theorem 1.1]{CSY15}. This is because by \cite[Theorem 1.1]{CSY15}, there is a dense $G_\delta$ set $\cR\subset {\rm Diff}^1(M)$ such that for any $f\in\cR$, if $f$ is away from ones with a homoclinic tangency, any chain transitive set $\Lambda$ admits a partially hyperbolic splitting $TM|_\Lambda=E^u\oplus_{\succ} E_1^c\oplus_{\succ} \cdots \oplus_{\succ} E_k^c\oplus_{\succ} E^s$ with $\dim E_i^c=1$ for $1\le i\le k$; then by Theorem~\ref{Thm:CPS-extremalbundle}, when $\Lambda$ is not reduced to be a single periodic orbit, we have that $E^u$ is not trivial.

One can also present a proof of Theorem~\ref{Thm:CSY-quasiattractor} from the techniques in \cite{CSY15}.
\begin{proof}[Sketch of the proof of Theorem~\ref{Thm:CSY-quasiattractor}.]
Under the assumptions of Theorem~\ref{Thm:CSY-quasiattractor}, from \cite[Corollary 1.6]{CSY15}, one knows that the quasi attractor $\Lambda$ is a homoclinic class $H(p)$. By \cite[Theorem 1.1]{CSY15}, $\Lambda=H(p)$ admits a partially hyperbolic splitting 
$$TM|_\Lambda=E^u\oplus_\succ E_1^c\oplus_\succ\cdots\oplus_\succ E_k^c\oplus_\succ E^s,~~~\dim E_i^c=1,~\forall 1\le i\le k,$$ 
and the minimal unstable dimension of periodic orbits in $H(p)$ is $\dim E^u$ or $\dim E^u+1$.

Now we argue by contradiction, and assume that $E^u=\{0\}$. Thus, the minimal unstable dimension of periodic orbits in $H(p)$ is  $0$ or $1$. Since $\Lambda=H(p)$ is not reduced to be a single periodic orbit, one knows that the minimal unstable dimension is $1$; moreover, there are periodic orbits in $H(p)$ such that they are weak along $E_1^c$, i.e., their Lyapunov exponents along $E_1^c$ are arbitrarily close to $0$. Thus under some generic assumptions, there is a period point $q$ in $H(p)$ such that the unstable dimension of $q$ is $1$ and its unstable manifold intersect the basin of a sink.  Since the sink cannot be contained in $\Lambda$, one has that the unstable manifold of $p$ cannot be completely contained in $\Lambda$. This gives a contradiction to the fact that $\Lambda$ is a quasi attractor because the unstable set of any point in a quasi attractor is always contained in the quasi attractor.\end{proof}

\smallskip

Now we are ready to prove Theorem~\ref{main}.

\begin{proof}[Proof of Theorem~\ref{main}]

Take a dense $G_\delta$ set $\cR\subset{\rm Diff}^1(M)$ having the properties as in Lemma~\ref{Lem:quasi-attractor}, Theorem~\ref{Thm:CPS-extremalbundle}, Theorem~\ref{Thm:CSY-quasiattractor}.

\smallskip

Since $\cR$ is dense in ${\rm Diff}^1(M)$, it suffices to prove that any $f\in\cR$ has the properties stated in the theorem. To conclude, one can assume that $f$ cannot be accumulated by ones with a homoclinic tangency, and $f$ is not essentially Morse-Smale. We will prove that in this case, $f$ can be accumulated by ones with an SRB measure.

By Lemma~\ref{Lem:quasi-attractor}, there is a dense $G_\delta$ set $R\subset M$ such that for any point $x\in R$, $\omega(x)$ is a quasi attractor. We have two cases:
\begin{itemize}

\item either, for any point $x\in R$, $\omega(x)$ is a trivial quasi-attractor, i.e., it is reduced to be a periodic orbit.

\item or, there is a point $x\in R$ such that $\omega(x)$ is not a trivial quasi attractor.

\end{itemize}

Now we consider the first case. Note that $\omega(x)$ is a periodic sink. By Theorem~\ref{Thm:CPS-extremalbundle}, $f$ has only finitely many sinks. We have that $\cup_{x\in R}\omega(x)$ contains finite sinks and $f$ is essentially Morse-Smale. We get a contradiction.

In the second case, $f$ has a non-trivial quasi attractor. By Theorem~\ref{Thm:CSY-quasiattractor}, the quasi attractor admits a partially hyperbolic splitting $E^u\oplus_\succ E_1^c\oplus_\succ\cdots\oplus_\succ E^c_k\oplus_\succ E^s$ with $\dim E_i^c=1$, where $E^u$ is non-trivial.
By the continuity of the dominated splitting, there is a $C^2$ diffeomropbhism $g$ arbitrarily close to $f$ and an attracting set $\Lambda$ of $g$ such that $TM|_\Lambda=E^u\oplus_\succ E_1^c\oplus_\succ\cdots\oplus_\succ E^c_k\oplus_\succ E^s$ with $\dim E_i^c=1$, where $E^u$ is non-trivial. By Theorem~\ref{maintheorem}, $g$ admits an SRB measure on $\Lambda$.

\end{proof}

\section{Gibbs $u$-states and Gibbs $cu$-states}\label{Sec:gibbs-cu}

In the setting of partial hyperbolicity, a powerful tool to study SRB measures is the \emph{Gibbs $u$-states} which were defined by Pesin-Sinai \cite{PeS82}. For a compact invariant set $\Lambda$ with a partially hyperbolic splitting $TM|_\Lambda=E^{uu}\oplus_\succ E^{cs}$, an invariant measure $\mu$, supported on $\Lambda$ is said to be a \emph{Gibbs $u$-state} (associated to this splitting) if the disintegration along the unstable foliation is absolutely continuous with respect to the Lebesgue measures of these sub-manifolds.

We give a list of properties of Gibbs $u$-states.

\begin{Proposition}\label{Pro:property-u-state}
Assume that $f$ is a $C^2$ diffeomorphism and $\Lambda$ is a compact invariant set of $f$ with a partially hyperbolic splitting $TM|_\Lambda=E^{uu}\oplus_\succ E^{cs}$. Then one has the following properties.
\begin{itemize}

\item The ergodic components of any Gibbs $u$-state are Gibbs $u$-states.

\item The set of Gibbs $u$-states is compact.
\end{itemize}

\end{Proposition}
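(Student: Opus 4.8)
The plan is to establish both items by exploiting the defining local structure of Gibbs $u$-states: they are invariant measures whose conditional measures on plaques of the strong-unstable foliation $\cW^{uu}$ have densities with respect to the induced Riemannian volume on those plaques, and moreover these densities are controlled by an explicit Jacobian formula. First I would recall the key analytic fact (due to Pesin--Sinai, and standard in this $C^2$, partially hyperbolic setting): if $\mu$ is a Gibbs $u$-state and $\{\rho_x\}$ denotes the family of conditional densities on local strong-unstable plaques, then for $y,z$ in the same plaque one has the uniform bound $\rho_x(y)/\rho_x(z)=\prod_{n\ge 1}\dfrac{\det(Df^{-1}|_{E^{uu}(f^{-n}y)})}{\det(Df^{-1}|_{E^{uu}(f^{-n}z)})}$, and this infinite product converges uniformly because $E^{uu}$ is uniformly expanded, $f$ is $C^2$, and the strong-unstable holonomies have bounded distortion. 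The upshot is a Harnack-type inequality: there is a constant $\kappa>0$, depending only on $f$ and the plaque size, such that $\kappa^{-1}\le\rho_x(y)/\rho_x(z)\le\kappa$ for all Gibbs $u$-states simultaneously. This uniform constant is what will make both the ergodic-decomposition statement and the compactness statement go through.

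For the first item, I would take a Gibbs $u$-state $\mu$ and its ergodic decomposition $\mu=\int\mu_\omega\,d\mu(\omega)$ given by the ergodic decomposition theorem. The point is that being a Gibbs $u$-state is a property of the disintegration along a fixed measurable partition $\xi$ subordinate to $\cW^{uu}$, and the ergodic decomposition refines such a partition compatibly: one can choose $\xi$ so that it is simultaneously subordinate to $\cW^{uu}$ and measurable with respect to the decomposition. Then the conditional measures of $\mu_\omega$ on $\xi$-elements coincide, for $\mu$-a.e.\ point, with those of $\mu$, hence are absolutely continuous with the same densities. One must check that a.e.\ ergodic component indeed "sees" full strong-unstable plaques rather than null subsets — this follows because the conditional measures of $\mu$ give positive mass to every subplaque (by the Harnack inequality the density is bounded below), so the same holds for $\mu_\omega$ almost surely. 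I would then conclude $\mu_\omega$ is a Gibbs $u$-state for $\mu$-a.e.\ $\omega$. The measurability/compatibility of the partition is the delicate bookkeeping point here, but it is by now routine (it appears in Bonatti--D\'iaz--Viana and in Pesin--Sinai).

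For the second item, let $\mu_n\to\mu$ weakly-$*$ with each $\mu_n$ a Gibbs $u$-state; I must show $\mu$ is one. Cover $\Lambda$ by finitely many foliation boxes for $\cW^{uu}$; in each box write $\mu_n=\int(\rho_{n,x}\,\mathrm{Leb}_{\cW^{uu}_{loc}(x)})\,d\hat\mu_n(x)$ for a transverse measure $\hat\mu_n$. Using the uniform Harnack bound the densities $\rho_{n,x}$ lie in a weak-$*$ compact set of probability densities on plaques; passing to a subsequence one extracts limiting densities and a limiting transverse measure, and one checks that the resulting disintegration represents the weak-$*$ limit $\mu$. Concretely I would test against continuous functions supported in a box, write the integral as an iterated integral over plaques and transversal, pass to the limit using the local product structure of the box and continuity of the plaques in the $C^1$ topology, and identify $\mu$ as an integral of absolutely continuous plaque measures — hence a Gibbs $u$-state. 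The main obstacle, and the step I would spend the most care on, is exactly this last point: that the weak-$*$ limit of the measures genuinely inherits the product-type disintegration (rather than, say, concentrating extra mass on the boundary of plaques or losing absolute continuity in the limit). This is controlled by the uniform lower and upper density bounds together with the fact that the strong-unstable plaques vary continuously, so no mass can escape to lower-dimensional transverse sets; invariance of $\mu$ (automatic from weak-$*$ convergence of invariant measures) then closes the argument.
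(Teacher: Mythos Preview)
Your proposal is correct and follows the standard Pesin--Sinai argument; the paper's own ``proof'' is simply a citation to \cite[Lemma 11.13 and Remark 11.15]{BDV05}, so you have supplied exactly the details that the authors defer to that reference. The key ingredients you identify---the explicit density formula, the resulting uniform Harnack-type bound $\kappa^{-1}\le\rho_x(y)/\rho_x(z)\le\kappa$, and its use both to propagate absolute continuity to ergodic components and to pass to weak-$*$ limits in foliation boxes---are precisely what underlies the cited result.
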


\begin{proof}
One can see \cite[Lemma 11.13 and Remark 11.15]{BDV05} for instance.
\end{proof}

In this paper, we also have to study a conception called \emph{Gibbs $cu$-states}. Since there are several sub-bundles in this paper, we will use the terminology \emph{Gibbs $E$-state}, for some invariant sub-bundle $E$. 

\begin{Definition}\label{Def:plaque-family}
Assume that $\Lambda$ is a compact invariant set of $f$ and $E\subset TM|_\Lambda$ is an invariant sub-bundle. A \emph{plaque family} of $E$, which is denoted by $\{W^E(x)\}_{x\in\Lambda}$, is a family of embedded sub-manifolds of dimension $\dim E$ satisfying that each sub-manifold is diffeomorphic to the unit ball in $\RR^{\dim E}$, and has the following properties:
\begin{itemize}

\item For any point $x\in\Lambda$, one has $T W^E(x)|_{x}=E(x)$;

\item For any neighborhood $U\subset W^E(f(x))$ of $f(x)$, there is a neighborhood $V$ of $x$ in $W^E(x)$ such that $f(V)\subset U$.

\end{itemize}
Denote by $W^E_\varepsilon(x)$ the $\varepsilon$-neighborhood of $x$ in $W^E(x)$. The second property can be represented as: for any $\varepsilon>0$, there is $\delta>0$ such that for any $x\in\Lambda$, one has $f(W^E_\delta(x))\subset W^E_\varepsilon(f(x))$.

\end{Definition}

For dominated splittings, one has the following plaque family theorem \cite[Theorem 5.5]{HPS77}: 
\begin{Theorem}\label{Thm:plaque-family}
Assume that $\Lambda$ is a compact invariant set with a dominated splitting $TM|_\Lambda =E\oplus_\succ F$. Then there are plaque families of $E$ and $F$. 
\end{Theorem}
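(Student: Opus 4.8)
The plan is to construct the plaque family of $E$ by the Hadamard--Perron graph transform, exactly as in \cite{HPS77}, and to obtain the one of $F$ by applying the same construction to $f^{-1}$. \emph{Reduction.} After replacing the Riemannian metric by an adapted one for the dominated splitting (which exists; alternatively one passes to an iterate $f^N$ and invokes the uniqueness statement below), we may assume $C=1$, i.e. $\|Df|_{F(x)}\|\le\lambda\, m(Df|_{E(x)})$ for all $x\in\Lambda$, where $m(\cdot)$ is the conorm and $\lambda\in(0,1)$. \emph{Local coordinates.} For each $x\in\Lambda$ identify a neighbourhood of $x$ in $M$ with a ball $B_r(0)\subset T_xM=E(x)\oplus F(x)$ via $\exp_x$, and set $f_x:=\exp_{f(x)}^{-1}\circ f\circ\exp_x$, so that $f_x(0)=0$ and $Df_x(0)=D_xf$ respects the splitting. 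In block form, $f_x(u,v)=(A_xu+\alpha_x(u,v),\,B_xv+\beta_x(u,v))$ with $A_x=Df|_{E(x)}$, $B_x=Df|_{F(x)}$, and $\alpha_x,\beta_x$ with vanishing first derivative at $0$; since $f$ is $C^1$ and $\Lambda$ is compact, $\|\alpha_x\|_{C^1},\|\beta_x\|_{C^1}$ on $B_r(0)$ are uniformly small once $r$ is small. (The angle between $E$ and $F$ is bounded below by continuity and compactness, which absorbs the non-orthogonality of the decomposition into these error terms.)

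Fix a small $\varepsilon>0$ and let $\Sigma$ be the complete metric space of families $\sigma=(\sigma_x)_{x\in\Lambda}$, continuous in $x$, where $\sigma_x\colon E_\varepsilon(x)\to F(x)$ is Lipschitz with $\sigma_x(0)=0$ and $\mathrm{Lip}(\sigma_x)\le1$, with the uniform distance. Define the graph transform $\Gamma\colon\Sigma\to\Sigma$ by declaring $\mathrm{graph}(\Gamma(\sigma)_{f(x)})$ to be the connected component of $0$ in $f_x(\mathrm{graph}(\sigma_x))\cap(E_\varepsilon(f(x))\times F(f(x)))$. The two facts to check, and precisely where the domination hypothesis is essential, are: \emph{(i)} $\Gamma$ is well defined --- the image $f_x(\mathrm{graph}(\sigma_x))$ overflows $E_\varepsilon(f(x))$ and the component through $0$ is a $1$-Lipschitz graph; this is the covering lemma, valid because $A_x$ expands $E$-directions by at least $m(A_x)$ while $B_x$ expands $F$-directions by at most $\|B_x\|\le\lambda\,m(A_x)$, the nonlinear terms being $C^1$-small; and \emph{(ii)} $\Gamma$ contracts the uniform distance, by the same gap of rates. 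Hence $\Gamma$ has a unique fixed point $\sigma^*\in\Sigma$.

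The fixed point $\sigma^*$ is a priori only Lipschitz. To upgrade it to $C^1$ one runs a second, fibered graph transform on the candidate derivatives: the formal derivatives $D\sigma^*_x$ must satisfy a linear cocycle fixed-point equation over $\Lambda$ which is again a contraction (same rate gap), so $\sigma^*$ is $C^1$ with $x\mapsto D\sigma^*_x$ continuous; evaluating the invariance relation at $0$ and using that $Df_x(0)$ is block-diagonal forces $D\sigma^*_x(0)=0$. Setting $W^E(x):=\exp_x(\mathrm{graph}(\sigma^*_x))$ (a $\dim E$-disc diffeomorphic to the unit ball after rescaling $E_\varepsilon(x)$), we get $T_xW^E(x)=E(x)$. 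The local invariance of Definition~\ref{Def:plaque-family} is read off from the construction: since $\mathrm{graph}(\sigma^*_{f(x)})$ is the component of $0$ in $f_x(\mathrm{graph}(\sigma^*_x))\cap(\text{ball})$, a sufficiently small sub-disc of $\mathrm{graph}(\sigma^*_x)$ around $0$ is connected, passes through $0$, stays in the ball, hence is mapped by $f_x$ into $\mathrm{graph}(\sigma^*_{f(x)})$; through the exponential charts this says $f(W^E_\delta(x))\subset W^E_\varepsilon(f(x))$ for a suitable $\delta$. Finally, for $F$: if $E$ dominates $F$ for $f$, then $F$ dominates $E$ for $f^{-1}$ (the domination inequality is symmetric under $n\mapsto-n$ after swapping the bundles), so the construction applied to $f^{-1}$ with the dominated splitting $F\oplus_\succ E$ produces a plaque family $\{W^F(x)\}$ tangent to $F$; it is locally invariant under $f^{-1}$, and rearranging the defining inclusion of its fixed point (apply $f$ and reindex) it is locally invariant under $f$ as well, as required.

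I expect the only genuinely delicate point to be the covering lemma behind the well-definedness of $\Gamma$, together with the uniformity in $x\in\Lambda$ of all the estimates: this is exactly where domination --- rather than mere $Df$-invariance of $E$ --- is indispensable, and where one must verify that truncating to $E_\varepsilon$-graphs is compatible with the dynamics. The passage from Lipschitz to $C^1$ regularity is standard but again uses the rate gap; everything else (charts, compactness bounds, non-orthogonality corrections) is routine.
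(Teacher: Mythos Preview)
The paper does not prove this theorem; it merely states it with the citation to \cite[Theorem~5.5]{HPS77}. Your sketch is the standard Hirsch--Pugh--Shub graph-transform argument, so in that sense you are reproducing exactly what the citation points to.

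There is, however, a genuine gap in your step~(i). Domination gives $\|B_x\|\le\lambda\,m(A_x)$ but says nothing about $m(A_x)\ge1$; the bundle $E$ need not be uniformly expanded, so the image of a graph over $E_\varepsilon(x)$ under $f_x$ need not cover $E_\varepsilon(f(x))$, and the ``overflowing'' you assert can simply fail. The HPS remedy --- and the step you are missing --- is to extend each local map $f_x$ to a globally defined fiber map on $E(x)\oplus F(x)$, equal to its linearisation $A_x\oplus B_x$ outside a small ball, via a smooth bump function. One then runs the graph transform on Lipschitz sections $\sigma_x\colon E(x)\to F(x)$ defined on all of $E(x)$; overflowing is now automatic because $A_x$ is a linear isomorphism, and the contraction estimate goes through exactly from the rate gap $\|B_x\|/m(A_x)\le\lambda<1$. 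The fixed-point graphs agree with the genuine dynamics only on the small ball where no modification was made, which is precisely why the conclusion is \emph{local} invariance rather than invariance --- and, incidentally, why plaque families are not unique. You rightly flag the covering lemma as the delicate point, but the justification you offer (an appeal to $m(A_x)$ alone) does not suffice; the bump-function extension is the missing idea.
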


One has the existence of unstable manifolds in the dominated case. 

\begin{Lemma}\label{Lem:dominated-unstable}
Assume that $\Lambda$ is a compact invariant set with a dominated splitting $TM|_{\Lambda}=E\oplus_\succ F$.
Given $\ell\in\NN$ and $\lambda\in(0,1)$, there is $\delta=\delta(\ell,\lambda)>0$ such that for any point $x\in\Lambda$, if 
$$\prod_{i=0}^{n-1}\|Df^{-\ell}|_{E(f^{-i\ell}(x))}\|\le\lambda^n,~~~\forall n\in\NN,$$
then $W^E_\delta(x)$ is contained in the unstable manifold of $x$.

\smallskip

Assume that $\mu$ is an ergodic measure supported on $\Lambda$. Assume that all Lyapunov exponents of $\mu$ along $E$ are positive. Then there is a positive $\mu$-measurable function $\delta(x)$ for $\mu$-almost every point $x$ such that $W^E_{\delta(x)}(x)$ is contained in the unstable manifold of $x$.
\end{Lemma}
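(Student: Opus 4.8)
The plan is to deduce both statements from a single quantitative graph-transform (Hadamard–Perron) argument applied to the plaque family $\{W^E(x)\}_{x\in\Lambda}$ provided by Theorem~\ref{Thm:plaque-family}. First I would fix a reference scale: by uniform continuity of $Df$ on the compact set $\Lambda$ and the domination $E\oplus_\succ F$, there is $\e_0>0$ so that inside each plaque $W^E_{\e_0}(x)$ the map $f^{-\ell}$ is, after identifying plaques with balls in $\RR^{\dim E}$ via the normal exponential chart, a small $C^1$-perturbation of the linear map $Df^{-\ell}|_{E(x)}$, with the perturbation controlled by a modulus $\rho(\e_0)\to 0$ as $\e_0\to 0$. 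The hypothesis $\prod_{i=0}^{n-1}\|Df^{-\ell}|_{E(f^{-i\ell}(x))}\|\le\lambda^n$ says precisely that the product of the operator norms of these linear maps contracts at rate $\lambda<1$ along the backward $\ell$-orbit of $x$. Choosing $\e_0$ small enough that $(\,\|Df^{-\ell}|_{E(\cdot)}\|+\rho(\e_0)\,)$ still gives a subexponential-with-ratio-$\lambda'$ bound for some $\lambda'\in(\lambda,1)$, a standard telescoping estimate shows: if $y\in W^E_\delta(x)$ with $\delta$ small (depending on $\ell,\lambda$ through $\e_0$ and a summability constant), then the backward iterates $f^{-n\ell}(y)$ stay in $W^E_{\e_0}(f^{-n\ell}(x))$ for all $n$ and $d(f^{-n\ell}(y),f^{-n\ell}(x))\to 0$ exponentially. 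Interpolating over the $\ell$ intermediate iterates (again using compactness to bound $\|Df^{\pm 1}\|$) upgrades this to $d(f^{-m}(y),f^{-m}(x))\to 0$ for all $m\in\NN$; hence $y$ lies in the unstable set of $x$, and since this holds for every $y\in W^E_\delta(x)$ and $W^E_\delta(x)$ is a $C^1$ disk tangent to $E(x)$ and backward-contracted, it is (a piece of) the unstable manifold of $x$. This proves the first statement with $\delta=\delta(\ell,\lambda)$.

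For the second statement I would reduce to the first by the Oseledets theorem together with the Pliss lemma. Since $\mu$ is ergodic and all Lyapunov exponents along $E$ are positive, pick $\chi>0$ strictly below the smallest exponent along $E$; then for $\mu$-a.e.\ $x$,
$$
\lim_{n\to\infty}\frac1n\log\prod_{i=0}^{n-1}\|Df^{-1}|_{E(f^{-i}(x))}\|= -\,\bigl(\text{average of exponents along }E\bigr)<-\chi,
$$
wait — more carefully: for $\mu$-a.e.\ $x$ one has $\tfrac1n\sum_{i=0}^{n-1}\log\|Df^{-1}|_{E(f^{-i}x)}\|\to \int \log\|Df^{-1}|_{E}\|\,d\mu$, which is negative because all exponents along $E$ are positive; fix $\lambda\in(0,1)$ with $\log\lambda$ larger than this integral. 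Choose $\ell$ so that $\tfrac1\ell\log\|Df^{-\ell}|_E\|$ has the same (negative) average, and apply the Pliss lemma along the backward orbit of $x$ to the sequence $a_i=\log\|Df^{-\ell}|_{E(f^{-i\ell}x)}\|$: for $\mu$-a.e.\ $x$ there are infinitely many "backward $\ell$-hyperbolic times", i.e.\ indices $k=k(x)$ with $\prod_{i=k}^{k+n-1}\|Df^{-\ell}|_{E(f^{-i\ell}x)}\|\le\lambda^n$ for all $n\ge 1$. At such a time the point $z=f^{-k\ell}(x)$ satisfies the hypothesis of the first part, so $W^E_\delta(z)$ lies in the unstable manifold of $z$ with $\delta=\delta(\ell,\lambda)$ uniform; pushing forward by $f^{k\ell}$ and using that $f$ expands along $E$ near $z$ (or simply invoking invariance of unstable manifolds under $f$), we get that $W^E_{\delta(x)}(x)$ lies in the unstable manifold of $x$ for some measurable $\delta(x)>0$ (the measurability is automatic from the measurable choice of $k(x)$ and the uniform $\delta$).

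The main obstacle I expect is the bookkeeping in the first part: making the passage from the discrete "every $\ell$ steps" contraction hypothesis to genuine exponential contraction at every single iterate precise, since the intermediate iterates $f^{-j}(y)$ with $0<j<\ell$ may temporarily leave the small scale $\e_0$ if one is not careful about constants. The remedy is to first run the argument entirely at the level of the induced map $g=f^{-\ell}$ with its induced plaque family $\{W^E(x)\}$ (which is still a plaque family for $g$, tangent to the same bundle), obtaining contraction and localization for $g$, and only at the end absorb the loss over the $\ell$ intermediate steps into the constant $C$ in the definition of unstable manifold — this is where we use that "unstable manifold of $x$" only requires $d(f^{-n}(y),f^{-n}(x))\to 0$, not a specified rate. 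A secondary technical point is verifying that the plaque $W^E_\delta(x)$, a priori only forward-almost-invariant, is actually backward-invariant under $g$ at the relevant scale; this follows from the overflowing/graph-transform property once the contraction estimate is in place, exactly as in the classical stable manifold theorem (\cite[Theorem 5.5]{HPS77}).
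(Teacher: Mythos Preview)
The paper does not actually prove this lemma: it notes that Lemma~\ref{Lem:dominated-unstable} is the deterministic special case of Lemma~\ref{Lem:dominated-unstable-extended}, and for the latter it simply refers to \cite[Section~8]{ABC11}. Your graph-transform argument for the first part is precisely the standard one carried out in that reference, and your identification of the two bookkeeping issues (controlling the intermediate iterates between multiples of $\ell$, and the backward local invariance of the $E$-plaques) is accurate and correctly resolved.

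There is, however, a genuine gap in your reduction for the second part when $\dim E>1$. You assert that $\int\log\|Df^{-1}|_{E}\|\,d\mu$ is negative ``because all exponents along $E$ are positive'', and then choose $\ell$ so that the time-$\ell$ version ``has the same (negative) average''. But the Birkhoff average of $\log\|Df^{-1}|_E\|$ is only an \emph{upper bound} for the top Lyapunov exponent of $Df^{-1}|_E$ (equivalently, minus the smallest exponent of $Df|_E$); when $\dim E>1$ this inequality can be strict, and $\int\log\|Df^{-1}|_{E}\|\,d\mu$ may well be positive even though all exponents along $E$ are. The correct statement comes from Kingman's subadditive ergodic theorem: one has
\[
\frac{1}{\ell}\int\log\|Df^{-\ell}|_E\|\,d\mu\ \searrow\ -\chi_{\min}^E(\mu)<0\qquad\text{as }\ell\to\infty,
\]
so for $\ell$ sufficiently large the integral \emph{becomes} negative, and only then does your Pliss argument applied to $a_i=\log\|Df^{-\ell}|_{E(f^{-i\ell}x)}\|$ go through. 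With this correction the rest of your reduction (backward hyperbolic time $k(x)$, uniform $\delta$ at $f^{-k\ell}(x)$ from the first part, push forward by $f^{k\ell}$ to get a measurable $\delta(x)$) is correct.
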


Lemma~\ref{Lem:dominated-unstable} is a special case of Lemma~\ref{Lem:dominated-unstable-extended} in Section~\ref{Sec:disintegration}.

\smallskip

Using Lemma~\ref{Lem:dominated-unstable}, one can define a measurable partition $\mu$-subordinate to $W^{E,u}$, where $W^{E,u}$ is the unstable manifold tangent to $E$, i.e., $W^{E,u}(x)=W^E(x)\cap W^u_{loc}(x)$.
\begin{Definition}\label{Def:subordinate}

Assume that $\Lambda$ is a compact invariant set with a dominated splitting $TM|_\Lambda=E\oplus_\succ F$. Assume that $\mu$ is an invariant measure satisfying the Lyapunov exponents along $E$ of $\mu$-almost every point $x$ are positive. A measurable partition $\xi$ is said to be \emph{$\mu$-subordinate to $W^{E,u}$} if for $\mu$-almost every point $x$, $\xi(x)$ is an open set contained in $W^{E}_{\delta(x)}(x)$, where $\delta$ is the measurable function as in Lemma~\ref{Lem:dominated-unstable}.

\end{Definition}

\begin{Definition}\label{Def:Gibbscu}
Assume that $f\in {\rm Diff}^2(M)$ has an attractor $\Lambda$ with dominated splitting $TM|_{\Lambda}=E\oplus_{\succ} F$. We say an $f$-invariant (not necessarily ergodic) measure $\mu$ supported on $\Lambda$ is a Gibbs $E$-state if 
\begin{enumerate}
\item
For $\mu$-almost every point, its Lyapunov exponents along $E$ are all positive.

\item the conditional measures of $\mu$  are absolutely continuous  with respect to Lebesgue measures for any measurable partition that is $\mu$-subordinate to $W^{E,u}$.
\end{enumerate}
\end{Definition}

\begin{Proposition}\label{cuproperty} Let $f\in {\rm Diff}^2(M)$ and $\Lambda$ is an attracting set with a dominated splitting $TM|_{\Lambda}=E\oplus_{\succ} F$.
If $\mu$ is a Gibbs $E$-state supported on $\Lambda$, then almost every ergodic component of $\mu$ is a Gibbs $E$-state.
\end{Proposition}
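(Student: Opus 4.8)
The plan is to establish the statement by combining the ergodic decomposition of $\mu$ with the characterization of absolute continuity of conditional measures on unstable plaques, following the classical template used for Gibbs $u$-states (cf. \cite[Lemma 11.13]{BDV05}) but being careful about the non-uniform hyperbolicity along $E$. First I would verify that the positivity of Lyapunov exponents along $E$ passes to almost every ergodic component: writing $\mu=\int \mu_\alpha\, d\tau(\alpha)$ for the ergodic decomposition, the set $G=\{x: \text{all Lyapunov exponents along } E \text{ at } x \text{ are positive}\}$ is $f$-invariant and has full $\mu$-measure by hypothesis, so $\mu_\alpha(G)=1$ for $\tau$-a.e.\ $\alpha$. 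Hence each such $\mu_\alpha$ satisfies condition (1) of Definition~\ref{Def:Gibbscu}, and in particular Lemma~\ref{Lem:dominated-unstable} applies to $\mu_\alpha$, furnishing a measurable function $\delta_\alpha(\cdot)$ so that the notion of a partition $\mu_\alpha$-subordinate to $W^{E,u}$ makes sense.

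The core of the argument is condition (2). The key point is that absolute continuity of conditional measures along $W^{E,u}$ can be detected on a single, carefully chosen measurable partition rather than on all subordinate partitions: if $\xi$ is a fixed measurable partition $\mu$-subordinate to $W^{E,u}$ whose elements are (relatively) open in the local unstable plaques and that refines under $f^{-1}$ so that $\bigvee_{n\ge 0} f^{-n}\xi$ generates the full unstable foliation, then $\mu$ being a Gibbs $E$-state is equivalent to the conditional measures $\mu^\xi_x$ being absolutely continuous with respect to Lebesgue on $\xi(x)$ for $\mu$-a.e.\ $x$ (this is the standard equivalence from Ledrappier--Young / Ledrappier--Strelcyn theory, valid because the holonomies inside unstable leaves are absolutely continuous). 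I would therefore fix such a $\xi$ that is simultaneously subordinate to $W^{E,u}$ for $\mu$ and for $\tau$-a.e.\ $\mu_\alpha$ (possible since the $\delta_\alpha$ can be taken measurable in $\alpha$ and one can shrink to $\inf$), and then relate the conditionals $\mu^\xi_x$ to $(\mu_\alpha)^\xi_x$.

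The decisive step is the disintegration identity: for a fixed measurable partition $\xi$, the conditional measures satisfy $\mu^\xi_x = (\mu_\alpha)^\xi_x$ for $\mu_\alpha$-a.e.\ $x$, for $\tau$-a.e.\ $\alpha$ — this is because conditioning on $\xi$ and conditioning on the $\sigma$-algebra of ergodic components commute in the appropriate sense (both are coarser than the point partition, and the ergodic component map is $f$-invariant hence $\bigvee_n f^{-n}\xi$-measurable modulo $\mu$). Concretely, integrating the family $x\mapsto (\mu_\alpha)^\xi_x$ against $d\tau(\alpha)$ produces a family of probability measures on the atoms of $\xi$ that disintegrates $\mu$, and by uniqueness of disintegration it must coincide with $x\mapsto\mu^\xi_x$ for $\mu$-a.e.\ $x$; by Fubini this forces $(\mu_\alpha)^\xi_x=\mu^\xi_x$ for $\tau$-a.e.\ $\alpha$ and $\mu_\alpha$-a.e.\ $x$. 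Since $\mu^\xi_x\ll \mathrm{Leb}_{\xi(x)}$ for $\mu$-a.e.\ $x$, we conclude $(\mu_\alpha)^\xi_x\ll \mathrm{Leb}_{\xi(x)}$ for $\tau$-a.e.\ $\alpha$ and $\mu_\alpha$-a.e.\ $x$, which by the equivalence above means $\mu_\alpha$ is a Gibbs $E$-state for $\tau$-a.e.\ $\alpha$.

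I expect the main obstacle to be the bookkeeping around the measurable function $\delta(x)$ from Lemma~\ref{Lem:dominated-unstable} in the non-uniform setting: one must ensure that a single partition $\xi$ can be chosen subordinate to $W^{E,u}$ simultaneously for $\mu$ and for $\tau$-a.e.\ ergodic component, and that the generating property $\bigvee_{n\ge0} f^{-n}\xi = $ (unstable partition) holds in a way compatible with all these measures at once. This is a standard but delicate point; it can be handled by taking $\xi$ built from a Lebesgue-density-point construction inside small foliation boxes, with the plaque radius dominated by $\inf_\alpha \delta_\alpha$, exactly as in the uniformly hyperbolic case where the analogous statement for Gibbs $u$-states is proved. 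Once that is in place, the rest is the soft disintegration argument sketched above.
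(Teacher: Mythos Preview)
Your overall strategy coincides with the paper's: positivity of the Lyapunov exponents along $E$ passes to almost every ergodic component, it suffices to verify absolute continuity on a single subordinate partition $\xi$ (the paper invokes \cite[Chapter~IV, Remark~2.1]{LiQ95} for exactly this reduction), and then one identifies $(\mu_\alpha)^\xi_x$ with $\mu^\xi_x$. The gap lies in your justification of this last identity. That identity is equivalent to the invariant $\sigma$-algebra $\mathcal{I}$ being contained in $\hat\xi$ modulo $\mu$, i.e., to the ergodic-component map $x\mapsto\alpha(x)$ being constant on $\xi$-atoms. Your stated reason, ``the ergodic component map is $f$-invariant hence $\bigvee_n f^{-n}\xi$-measurable'', does not establish this: for a partition subordinate to $W^{E,u}$ one has $f^{-1}\xi\ge\xi$, so $\bigvee_{n\ge 0} f^{-n}\xi$ is the point partition modulo $\mu$, and measurability with respect to it is vacuous. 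Your uniqueness-of-disintegration step has the same circularity: invoking uniqueness requires the candidate family $x\mapsto(\mu_{\alpha(x)})^\xi_x$ to already be $\hat\xi$-measurable, which is precisely the point at issue.

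What is actually needed, and what the paper makes explicit, is the Birkhoff argument: points on the same local unstable plaque $\xi(x)\subset W^{E,u}(x)$ have exponentially converging backward orbits and hence identical backward Birkhoff averages, while on a full-$\mu$-measure set $R$ the backward averages determine the ergodic component. Thus $\alpha(\cdot)$ is constant on $\xi(x)\cap R$ for $\mu$-a.e.\ $x$, and since $\mu^\xi_x(R)=1$ this yields $\mathcal{I}\subset\hat\xi$ modulo $\mu$. The identity $(\mu_\alpha)^\xi_x=\mu^\xi_x$ then follows, and absolute continuity of the right-hand side transfers to the left. This is the content of the sentence in the paper's proof invoking the Birkhoff ergodic theorem, together with the reference to \cite[Section~6]{LeY85}.
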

\begin{proof}
Since the Lyapunov exponents of $\mu$-almost every point along $E$ are all positive, one has that the Lyapunov exponents along $E$ of any ergodic component $\nu$ of $\mu$ are all positive. 

Consider an ergodic component $\nu$ of $\mu$. From \cite[Chapter IV, Remark 2.1]{LiQ95}, it suffices to prove that there is one measurable partition  $\nu$-subordinate to $W^{E,u}$ such that the conditional measures of $\nu$ are absolutely continuous with respect to Lebesgue measures. Any measurable partition $\mu$-subordinate to $W^{E,u}$ gives such kind of measurable partitions of $\nu$. Moreover, by the Birkhoff ergodic theorem, there is a set $R$ with full $\mu$-measure such that the intersection of $R$ with almost every unstable manifold $W^{E,u}$ is the set of typical points for one ergodic component of $\mu$. Thus, the conditional measures of $\nu$ are absolutely continuous with respect to Lebesgue measures. See also \cite[Section 6]{LeY85}.
\end{proof}

\begin{Notation}
Let $\Lambda$ be a compact invariant set with a partially hyperbolic splitting $TM|_\Lambda=E^u\oplus_\succ E_1^c\oplus_\succ \cdots \oplus_\succ E_k^c\oplus_\succ E^s$, $\dim E_i^c=1$ for $1\le i\le k$. For any ergodic measure $\mu$ supported on $\Lambda$, denote by $\lambda_i^c(\mu)$ the Lyapunov exponent of $\mu$ along $E_i^c$ for $1\le i\le k$.

\end{Notation}

For the splitting in Theorem~\ref{maintheorem}, one can define some index for Gibbs $cu$-states.
Assume that $\Lambda$ is an attracting set of a $C^2$ diffeomorphism $f$ with a partially hyperbolic splitting $TM|_\Lambda=E^u\oplus_\succ E_1^c\oplus_\succ \cdots \oplus_\succ E_k^c\oplus_\succ E^s$, $\dim E_i^c=1$ for $1\le i\le k$. Given $0\le i\le k$, denote by $\cG_i$ the set of Gibbs $E^u\oplus_\succ E_1^c\oplus_\succ\cdots\oplus_\succ E_i^c$-states. By convention, $\cG_0$ is the set of Gibbs $u$-states. 

As a direct consequence of Proposition~\ref{cuproperty}, one has the following corollary, whose proof is omitted.
\begin{Corollary}\label{Cor:ergodic-in-i}
Given $0\le i\le k$, if $\mu\in\cG_i$, then $\nu\in\cG_i$ for any ergodic component $\nu$ of $\mu$.

\end{Corollary}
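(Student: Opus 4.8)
The statement to prove is Corollary~\ref{Cor:ergodic-in-i}: Given $0\le i\le k$, if $\mu\in\cG_i$, then $\nu\in\cG_i$ for any ergodic component $\nu$ of $\mu$.

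The plan is straightforward: this is meant to be an immediate application of Proposition~\ref{cuproperty}, so the proof is essentially an unpacking of definitions. First I would recall that $\cG_i$ is by definition the set of Gibbs $E^u\oplus_\succ E_1^c\oplus_\succ\cdots\oplus_\succ E_i^c$-states supported on $\Lambda$. So writing $E=E^u\oplus_\succ E_1^c\oplus_\succ\cdots\oplus_\succ E_i^c$ and $F=E_{i+1}^c\oplus_\succ\cdots\oplus_\succ E_k^c\oplus_\succ E^s$, we have a dominated splitting $TM|_\Lambda=E\oplus_\succ F$ (the domination holds since it is a coarsening of the given fine dominated splitting), and $\mu\in\cG_i$ means precisely that $\mu$ is a Gibbs $E$-state in the sense of Definition~\ref{Def:Gibbscu}. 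Since $\Lambda$ is an attracting set of the $C^2$ diffeomorphism $f$, the hypotheses of Proposition~\ref{cuproperty} are met.

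Next I would simply invoke Proposition~\ref{cuproperty}: almost every ergodic component $\nu$ of $\mu$ is again a Gibbs $E$-state, i.e., $\nu\in\cG_i$. That is the whole content of the corollary. One small remark worth including is why the case $i=0$ is covered: there $\cG_0$ is the set of Gibbs $u$-states, and the conclusion follows from the first item of Proposition~\ref{Pro:property-u-state} (the ergodic components of a Gibbs $u$-state are Gibbs $u$-states) — or equally from Proposition~\ref{cuproperty} applied with $E=E^u$, since a Gibbs $u$-state has positive Lyapunov exponents along $E^u$ automatically. Either way the statement holds uniformly in $i$.

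There is no real obstacle here; the only thing to be careful about is the measure-theoretic phrasing ``for any ergodic component'' versus ``for almost every ergodic component''. The ergodic decomposition is defined up to a set of components of zero mass in the decomposition, so ``any ergodic component'' should be read as ``every ergodic component in the support of the decomposition,'' and Proposition~\ref{cuproperty} already delivers the conclusion for almost every one of these; the others are irrelevant. I would state the proof in two or three sentences, noting that it is a direct consequence of Proposition~\ref{cuproperty} after recognizing $\cG_i$ as a set of Gibbs $E$-states for the coarsened dominated splitting, which is exactly why the paper says ``whose proof is omitted.''

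\begin{proof}
Fix $0\le i\le k$ and set $E=E^u\oplus_\succ E_1^c\oplus_\succ\cdots\oplus_\succ E_i^c$ (with $E=E^u$ when $i=0$) and $F=E_{i+1}^c\oplus_\succ\cdots\oplus_\succ E_k^c\oplus_\succ E^s$. Since coarsening a dominated splitting preserves domination, $TM|_\Lambda=E\oplus_\succ F$ is a dominated splitting, and by definition $\mu\in\cG_i$ means exactly that $\mu$ is a Gibbs $E$-state supported on the attracting set $\Lambda$ in the sense of Definition~\ref{Def:Gibbscu}. As $f\in{\rm Diff}^2(M)$ and $\Lambda$ is attracting, Proposition~\ref{cuproperty} applies and gives that almost every ergodic component $\nu$ of $\mu$ is again a Gibbs $E$-state, i.e. $\nu\in\cG_i$. (When $i=0$ this also follows from the first item of Proposition~\ref{Pro:property-u-state}.)
\end{proof}
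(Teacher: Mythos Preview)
Your proof is correct and is exactly the approach the paper has in mind: it states the corollary ``as a direct consequence of Proposition~\ref{cuproperty}'' and omits the proof, so unpacking the definition of $\cG_i$ as Gibbs $E$-states for the coarsened bundle $E=E^u\oplus_\succ E_1^c\oplus_\succ\cdots\oplus_\succ E_i^c$ and invoking Proposition~\ref{cuproperty} is precisely what is intended. Your remark reconciling ``almost every'' with ``any'' ergodic component is also apt and consistent with how the paper uses the phrase.
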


By using some absolute continuity of unstable sub-foliation, one has the following result, whose proof is contained in Appendix~\ref{App:invariant-manifold}.

\begin{Proposition}\label{Proposition:several-G}
 We have that $\cG_0\supset \cG_1\supset\cdots\supset \cG_k$.
\end{Proposition}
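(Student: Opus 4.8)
The plan is to prove the inclusions $\cG_{i}\supset\cG_{i+1}$ for each $0\le i\le k-1$; chaining these gives the full chain $\cG_0\supset\cdots\supset\cG_k$. Fix $i$ and set $E=E^u\oplus_\succ E_1^c\oplus_\succ\cdots\oplus_\succ E_i^c$ and $\hat E=E\oplus_\succ E_{i+1}^c$, so that $E\oplus_\succ E_{i+1}^c$ is itself a dominated splitting of $\hat E$ and $\hat E\oplus_\succ(E_{i+2}^c\oplus_\succ\cdots\oplus_\succ E^s)$ is a dominated splitting of $TM|_\Lambda$. Let $\mu\in\cG_{i+1}$, i.e. $\mu$ is a Gibbs $\hat E$-state. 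The two things to verify are: (i) the Lyapunov exponents of $\mu$ along $E$ are all positive; and (ii) the conditional measures of $\mu$ along any measurable partition $\mu$-subordinate to $W^{E,u}$ are absolutely continuous with respect to Lebesgue measure on the corresponding unstable plaques. Condition (i) is immediate: since $\mu$ is a Gibbs $\hat E$-state its exponents along $\hat E=E\oplus E_{i+1}^c$ are all positive, and $E\subset\hat E$, so in particular the exponents along $E$ are positive. (In fact this is exactly where the nesting is monotone: enlarging the bundle only adds the positivity requirement on one more exponent, $\lambda_{i+1}^c(\mu)>0$.)

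The real content is (ii), and here I would argue by absolute continuity of the $W^{E,u}$-subfoliation inside the $\hat E$-unstable manifolds $W^{\hat E,u}$. By Lemma~\ref{Lem:dominated-unstable}, for $\mu$-a.e.\ $x$ the plaque $W^{\hat E}_{\delta(x)}(x)$ lies in the unstable manifold of $x$; since $E\oplus_\succ E_{i+1}^c$ is a dominated splitting of $\hat E$, the plaque family theorem (Theorem~\ref{Thm:plaque-family}) applied inside $W^{\hat E,u}$ produces a $W^E$-subfoliation of each $\hat E$-unstable plaque whose leaves are the $E$-unstable plaques $W^{E,u}(y)=W^E(y)\cap W^u_{loc}(y)$, and this subfoliation is a genuine (strong-)unstable lamination inside $W^{\hat E,u}$ hence absolutely continuous with bounded Jacobians — this is the standard absolute-continuity statement for invariant laminations sitting inside a larger unstable manifold (Pesin theory / Ledrappier--Young; see \cite{LiQ95,LeY85,BDV05}). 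Now take a measurable partition $\eta$ of $\Lambda$ that is $\mu$-subordinate to $W^{\hat E,u}$. Refining $\eta$ by the $W^{E,u}$-subfoliation produces a measurable partition $\xi$ that is $\mu$-subordinate to $W^{E,u}$. Since $\mu\in\cG_{i+1}$, the conditional measures $\mu^\eta_x$ of $\mu$ on the $\hat E$-unstable plaques are absolutely continuous with respect to Lebesgue on $W^{\hat E,u}(x)$. Disintegrating $\mu^\eta_x$ further along the $W^{E,u}$-subfoliation and using Fubini together with the absolute continuity (and bounded Jacobians) of that subfoliation, one gets that for $\mu^\eta_x$-a.e.\ $y$ the conditional measure $\mu^\xi_y$ is absolutely continuous with respect to Lebesgue on $W^{E,u}(y)$. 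Integrating over $x$ shows the conditionals of $\mu$ along $\xi$ are a.c., which by the uniqueness-type remark already used in the proof of Proposition~\ref{cuproperty} (\cite[Chapter IV, Remark 2.1]{LiQ95}) implies the conditionals along \emph{any} partition $\mu$-subordinate to $W^{E,u}$ are a.c. Hence $\mu\in\cG_i$.

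I would organize the write-up as: (1) reduce to a single step $\cG_{i+1}\subset\cG_i$; (2) dispose of the positivity of exponents along $E$; (3) invoke the plaque family theorem inside $W^{\hat E,u}$ to get the $W^{E,u}$-subfoliation and cite its absolute continuity; (4) refine a $W^{\hat E,u}$-subordinate partition to a $W^{E,u}$-subordinate one and push the a.c.\ of conditionals through the subfoliation via Fubini; (5) conclude using \cite[Chapter IV, Remark 2.1]{LiQ95}. The main obstacle — and the reason this is deferred to Appendix~\ref{App:invariant-manifold} rather than dispatched inline — is step (3)--(4): one must be careful that the $W^{E,u}$-leaves really do foliate each $\hat E$-unstable plaque with uniformly (or at least measurably) bounded holonomy Jacobians on the non-uniformly hyperbolic (Pesin-block) set where $\delta(x)$ is only measurable, and that the measurable partitions can be chosen compatibly so that the disintegration identity $\mu^\xi = (\mu^\eta)^{\text{subfoliation}}$ holds $\mu$-a.e.; making the transfinite bookkeeping of plaque sizes $\delta(x)$ match up across the two subordinate partitions is exactly the delicate point.
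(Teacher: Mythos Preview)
Your proposal is correct and follows essentially the same approach as the paper: reduce to a single step $\cG_{i+1}\subset\cG_i$, observe that positivity of exponents along the smaller bundle is automatic, and then use the absolute continuity of the strong-unstable subfoliation inside the larger $\hat E$-unstable plaques (the paper's Lemma~\ref{Lem:absC}) together with Lemma~\ref{Lem:absolutely-cont} to transfer the a.c.\ property of conditionals from $W^{\hat E,u}$ down to $W^{E,u}$ via transitivity of disintegrations. Your steps (3)--(5) are exactly what the paper packages into Lemmas~\ref{Lem:absolutely-cont} and \ref{Lem:absC}, and your anticipated difficulty about bounded holonomy on Pesin blocks is precisely what the paper handles by restricting to $\Lambda_\ell(E_{i+1}^c,\alpha)$ where the $\hat E$-plaques are uniformly exponentially contracted in backward time.
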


The limit measure of a sequence of measures in $\cG_i$ may not be contained in $\cG_i$ if $i>0$. However, one has the following criterion, whose proof is given in Section~\ref{Sec:prove-the-left}.
\begin{Theorem}\label{Thm:criterion-in-i}
Assume that $\Lambda$ is an attracting set of a $C^2$ diffeomorphism $f$ with a partially hyperbolic splitting $TM|_\Lambda=E^u\oplus_\succ E_1^c\oplus_\succ \cdots\oplus_\succ E_k^c\oplus_\succ E^s$ with $\dim E_i^c=1$, $1\le i\le k$. Assume that $\{\mu_n\}\subset \cG_i$ is a sequence of ergodic measures and $\lim_{n\to\infty}\mu_n=\mu$. If there is $\alpha>0$ such that for $\lambda_i^c(\nu)\ge\alpha>0$ for any ergodic component $\nu$ of $\mu$, then $\mu\in\cG_i$.

\end{Theorem}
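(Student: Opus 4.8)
The plan is to promote the known semi-continuity at level $i=0$ (the set $\cG_0$ of Gibbs $u$-states is compact, by Proposition~\ref{Pro:property-u-state}) up to level $i$, using the lower bound $\lambda_i^c(\nu)\ge\alpha$ as the replacement for the compactness that is lost in the non-uniform setting. First I would observe that since $\cG_i\subset\cG_0$, the limit $\mu$ is automatically a Gibbs $u$-state; by Corollary~\ref{Cor:ergodic-in-i} and Proposition~\ref{cuproperty} it suffices to verify the Gibbs $E^u\oplus_\succ E_1^c\oplus_\succ\cdots\oplus_\succ E_i^c$-state property for almost every ergodic component $\nu$ of $\mu$, and by hypothesis each such $\nu$ has $\lambda_i^c(\nu)\ge\alpha$, hence all Lyapunov exponents of $\nu$ along $E^{cu,i}:=E^u\oplus_\succ E_1^c\oplus_\succ\cdots\oplus_\succ E_i^c$ are bounded below by $\min(\alpha,\lambda^{uu})>0$ for a uniform $\lambda^{uu}>0$ coming from uniform expansion of $E^u$. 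Condition (1) of Definition~\ref{Def:Gibbscu} is therefore satisfied; everything reduces to the absolute continuity of the conditional measures of $\nu$ along a partition $\nu$-subordinate to $W^{E^{cu,i},u}$.

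The heart of the argument is a uniformity statement: because $\lambda_i^c(\nu)\ge\alpha>0$ uniformly over ergodic components, there is a fixed $\ell\in\NN$ and $\lambda=\lambda(\alpha)\in(0,1)$ such that the hyperbolic-time set
$$
\Gamma_\ell=\Big\{x\in\Lambda:\ \prod_{j=0}^{n-1}\big\|Df^{-\ell}|_{E^{cu,i}(f^{-j\ell}(x))}\big\|\le\lambda^n,\ \forall n\in\NN\Big\}
$$
carries positive $\mu$-measure and, more importantly, positive $\mu_n$-measure uniformly in $n$ for $n$ large — this is where one uses that the $\mu_n$ are ergodic, lie in $\cG_i$ (so $\lambda_i^c(\mu_n)\ge\alpha$ can be arranged, or one passes to the relevant portion), together with $\mu_n\to\mu$ and a Pliss-type argument. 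On $\Gamma_\ell$ the plaque $W^{E^{cu,i}}_\delta(x)$ is a genuine piece of unstable manifold with uniform size $\delta=\delta(\ell,\lambda)$ by Lemma~\ref{Lem:dominated-unstable}. Now I would use the standard bounded-distortion estimate for the unstable Jacobian of $f^{-\ell}$ restricted to these uniform-size plaques: for points in $\Gamma_\ell$ the densities of $(f^{-n\ell})_*(\mathrm{Leb}_{W^{E^{cu,i}}})$ have uniformly bounded Radon–Nikodym derivatives, so the conditional measures of $\mu_n$ along these plaques have densities lying in a weak-$*$ compact family with uniform bounds. Passing to the limit, the conditional measures of $\mu$ — and hence of $\nu$ — along plaques meeting $\Gamma_\ell$ are absolutely continuous with uniformly bounded density.

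To finish, I would transfer this from the uniform piece $\Gamma_\ell$ to $\nu$-a.e.\ unstable manifold by the familiar invariance-and-saturation trick: since $\nu$ is ergodic with $\lambda_i^c(\nu)\ge\alpha$, $\nu$-a.e.\ point has infinitely many backward $\ell$-hyperbolic times, so its unstable manifold $W^{E^{cu,i},u}$ is, up to a set of zero leaf-Lebesgue measure, covered by forward iterates of plaques sitting inside $\Gamma_\ell$; absolute continuity of conditionals is preserved under $f$ because $f$ is $C^2$ (bounded distortion again) and under countable unions, so it propagates to a full-measure set of leaves. Invoking \cite[Chapter IV, Remark 2.1]{LiQ95} as in the proof of Proposition~\ref{cuproperty}, one measurable partition with absolutely continuous conditionals suffices, so $\nu\in\cG_i$, and therefore $\mu\in\cG_i$. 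The main obstacle I anticipate is the uniform-in-$n$ lower bound on $\mu_n(\Gamma_\ell)$ (equivalently, controlling the distribution of backward hyperbolic times along $E^{cu,i}$ uniformly over the sequence): the exponent $\lambda_i^c$ is only lower semi-continuous in general, so one must argue that the hypothesis $\lambda_i^c(\nu)\ge\alpha$ on the components of the limit, combined with $\mu_n\to\mu$, forces a definite fraction of $\mu_n$-mass to have good backward contraction of $E^{cu,i}$ — this likely requires a careful large-deviations/Pliss argument applied to the function $x\mapsto\log\|Df^{-\ell}|_{E^{cu,i}(x)}\|$ together with a compactness argument ruling out escape of the relevant mass to the region where $\lambda_i^c$ degenerates.
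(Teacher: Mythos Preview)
Your proposal follows essentially the same strategy as the paper: establish a uniform-in-$n$ lower bound on the $\mu_n$-measure of a Pesin block $\Gamma_\ell$ for the bundle $E^{cu,i}$, use bounded distortion on the uniform-size unstable plaques over $\Gamma_\ell$, and pass to the limit. You correctly flag the ``main obstacle'' as the uniform estimate $\mu_n(\Gamma_\ell)\ge c>0$, but your proposal does not actually resolve it. The parenthetical ``so $\lambda_i^c(\mu_n)\ge\alpha$ can be arranged'' is not justified---membership in $\cG_i$ only gives $\lambda_i^c(\mu_n)>0$, with no uniform lower bound, and since $\mu_n$ is ergodic there is no ``relevant portion'' to pass to. The phrase ``large-deviations/Pliss together with a compactness argument'' is a hope rather than a mechanism.

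The paper's resolution of this obstacle is concrete and somewhat different from what you sketch. It exploits that the \emph{one-step} set $\{x:\|Df^{-\ell}|_{E_i^c(x)}\|<e^{-\alpha'\ell}\}$ is open: the hypothesis on the ergodic components of the \emph{limit} $\mu$ (not on the $\mu_n$) gives, via Birkhoff, that this open set has $\mu$-measure $>1-\delta$ for suitable $\ell$; openness then transfers this to $\mu_n$ for large $n$ by weak-$*$ convergence. A refined Pliss lemma (Lemma~\ref{Lem:pliss-like}, from \cite{AnV17}) upgrades ``one-step contraction on a set of measure $>1-\rho$'' to ``full Pesin-block membership on a set of measure $>1-\varepsilon$'', yielding $\liminf_n\mu_n(\Gamma_\ell)>1-\varepsilon$ (this is Corollary~\ref{Cor:one-diff-pesin-block}). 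Since $\varepsilon$ is arbitrary, the paper concludes directly via a packaged criterion (Corollary~\ref{Cor:criterion-gibbs-E}) that $\mu\in\cG_i$, without decomposing into ergodic components and without your saturation/propagation step. The sharper estimate ($>1-\varepsilon$ rather than merely positive) is exactly what eliminates the extra work you propose at the end.
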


\begin{Definition}\label{Def:disintegration-index}

For the measure $\mu\in\cG_0$, denote by $I(\mu)$ the maximal $i$ such that $\mu\in\cG_i$. One can call this $I(\mu)$ is \emph{disintegration index} of $\mu$, although we will not mention it again.

\end{Definition}

We have the following simple observation:
\begin{Lemma}\label{Lem:component-srb}
Assume that $\Lambda$ is an attracting set of a $C^2$ diffeomorphism $f$ and $\Lambda$ admits a partially hyperbolic splitting $TM|_\Lambda=E^u\oplus_\succ E_1^c\oplus_\succ\cdots\oplus_\succ E_k^c\oplus_\succ E^c$ with $\dim E_i^c=1$ for $1\le i\le k$.
For an invariant measure $\mu$, assume that $I(\mu)=i$. Then we have
\begin{enumerate}
\item if $\mu$ has an ergodic component $\nu$ satisfying $\lambda_{i+1}^c(\nu)\le 0$, then $\nu$ is an SRB measure.

\item If $\int \log\|Df|_{E^c_{i+1}}\|{\rm d}\mu\le 0$, then the ergodic components of $\mu$ contains an SRB measure

\end{enumerate}

\end{Lemma}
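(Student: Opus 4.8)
The plan is to reduce everything to item~(1), whose heart is the observation that the hypothesis $\lambda_{i+1}^c(\nu)\le 0$ forces the entire unstable direction of $\nu$ to lie inside $F_i:=E^u\oplus_\succ E_1^c\oplus_\succ\cdots\oplus_\succ E_i^c$. Once this is established, the a priori only \emph{partial} absolute continuity built into membership in $\cG_i$ becomes the \emph{full} absolute continuity of the conditional measures on unstable manifolds, and the Ledrappier--Young characterization of Pesin's entropy formula finishes the job. Item~(2) will then be a short ergodic-decomposition computation feeding into item~(1).

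For item~(1): since $I(\mu)=i$ we have $\mu\in\cG_i$, so Corollary~\ref{Cor:ergodic-in-i} gives $\nu\in\cG_i$, and by Definition~\ref{Def:Gibbscu} all Lyapunov exponents of $\nu$ along $F_i$ are positive. The dominations $E^u\oplus_\succ E_1^c$, $E_j^c\oplus_\succ E_{j+1}^c$ and $E_k^c\oplus_\succ E^s$ force the Oseledets exponents of $\nu$ to be ordered compatibly with the splitting; hence from $\lambda_{i+1}^c(\nu)\le 0$ we obtain $0\ge\lambda_{i+1}^c(\nu)\ge\lambda_{i+2}^c(\nu)\ge\cdots\ge\lambda_k^c(\nu)$, while all exponents along $E^s$ are strictly negative because $E^s$ is uniformly contracted. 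Therefore, for $\nu$-a.e.\ $x$, the sum of the Oseledets subspaces with positive exponent is exactly $F_i(x)$, so the Pesin unstable manifold $W^u(x)$ has dimension $\dim F_i$ and is tangent to $F_i(x)$. Applying the second assertion of Lemma~\ref{Lem:dominated-unstable} with $E=F_i$ and $\mu=\nu$, we get $W^{F_i}_{\delta(x)}(x)\subset W^u(x)$ for $\nu$-a.e.\ $x$; since both manifolds have dimension $\dim F_i$, $W^{F_i}_{\delta(x)}(x)$ is an open neighbourhood of $x$ in $W^u(x)$. Consequently any measurable partition $\nu$-subordinate to $W^{F_i,u}$ is a measurable partition subordinate to the full unstable foliation $W^u$ in the usual sense, and the defining property of $\cG_i$ says that its conditional measures are absolutely continuous with respect to the induced Lebesgue measures. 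By the Ledrappier--Young theorem (\cite{LeY85}; see also \cite[Chapter~IV]{LiQ95}), $\nu$ satisfies Pesin's entropy formula, so $h_\nu(f)$ equals the sum of the positive Lyapunov exponents of $\nu$ counted with multiplicity, which is strictly positive since $F_i\ne\{0\}$ and all exponents along $F_i$ are positive. Hence $\nu$ is an SRB measure.

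For item~(2): because $\dim E_{i+1}^c=1$, the function $\varphi(x)=\log\|Df|_{E_{i+1}^c(x)}\|$ is a continuous additive cocycle, so for every ergodic measure $\sigma$ supported on $\Lambda$ one has $\lambda_{i+1}^c(\sigma)=\int\varphi\,{\rm d}\sigma$. Integrating over the ergodic decomposition $\widehat\mu$ of $\mu$ yields $\int\varphi\,{\rm d}\mu=\int\lambda_{i+1}^c(\nu)\,{\rm d}\widehat\mu(\nu)\le 0$; if $\lambda_{i+1}^c(\nu)>0$ held for $\widehat\mu$-a.e.\ $\nu$, this integral would be strictly positive, a contradiction. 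Thus there is an ergodic component $\nu$ of $\mu$ with $\lambda_{i+1}^c(\nu)\le 0$, and item~(1) shows that $\nu$ is an SRB measure.

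The step I expect to require the most care is the identification in item~(1) of a partition $\nu$-subordinate to $W^{F_i,u}$ with a partition subordinate to the full unstable foliation: one must check that $\lambda_{i+1}^c(\nu)\le 0$, together with the domination and the uniform contraction of $E^s$, genuinely exhausts the unstable direction by $F_i$, so that the ``$\cG_i$''-disintegration is exactly the SRB disintegration to which the Ledrappier--Young criterion applies. The remaining ingredients---the ordering of Lyapunov exponents under a dominated splitting and the ergodic-decomposition identity in item~(2)---are routine.
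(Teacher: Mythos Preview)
Your proposal is correct and follows essentially the same approach as the paper: both arguments pass to an ergodic component $\nu\in\cG_i$ via Corollary~\ref{Cor:ergodic-in-i}, observe that $\lambda_{i+1}^c(\nu)\le 0$ forces the full unstable direction to coincide with $F_i$, and then invoke Ledrappier--Young \cite{LeY85}; your item~(2) reduction via the ergodic decomposition is identical to the paper's. The only difference is that the paper compresses item~(1) into a one-line citation of \cite{LeY85}, whereas you spell out explicitly why the $\cG_i$-disintegration becomes the full unstable disintegration once all exponents beyond $E_i^c$ are non-positive---a clarification that is helpful but not a different strategy.
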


\begin{proof}
By Corollary~\ref{Cor:ergodic-in-i},  if $\nu$ is one ergodic component of $\mu$, then we have that $I(\nu)\ge I(\mu)$. Hence by Proposition~\ref{Proposition:several-G}, $\nu\in\cG_i$. Thus, if $\lambda_{i+1}^c(\nu)\le 0$, then $\nu$ is an SRB measure by the classical result \cite{LeY85}. Thus the first item is proved.

For the second item, one notices that if $\int \log\|Df|_{E^c_{i+1}}\|{\rm d}\mu\le 0$, then there is an ergodic component $\nu$ of $\mu$ satisfying $\lambda_{i+1}^c(\nu)\le 0$. Thus $\nu$ is an SRB measure by the first item.
\end{proof}

One considers a special subset $\cG_i^0\subset \cG_i$ such that $\mu\in\cG_i^0$ if and only if $\mu\in\cG_i$, $\lambda_{i+1}^c(\nu)>0$ for any ergodic component $\nu$ of $\mu$, and there is a sequence of measures $\nu_n$ in the ergodic components of $\mu$ such that $\lim_{n\to\infty}\lambda_{i+1}^c(\nu_n)=0$. Note that $\cG_i^0$ may be an empty set for any $0\le i\le k$.

\begin{Theorem}\label{Thm:non-empty}
Assume that $\Lambda$ is an attracting set of a $C^2$ diffeomorphism $f$ and $\Lambda$ admits a partially hyperbolic splitting $TM|_\Lambda=E^u\oplus_\succ E_1^c\oplus_\succ\cdots\oplus_\succ E_k^c\oplus_\succ E^c$ with $\dim E_i^c=1$ for $1\le i\le k$. Then we have that either $f$ has an SRB measure supported on $\Lambda$, or there is $0\le i\le k$ such that $\cG_i^0\neq\emptyset$.

\end{Theorem}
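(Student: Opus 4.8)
The plan is to start from the top of the filtration and descend, using the known existence of Gibbs $u$-states together with the compactness/criterion results established above. First I would recall that $\cG_0$, the set of Gibbs $u$-states, is nonempty and compact: nonemptiness follows from the standard Pesin--Sinai construction on the attracting set $\Lambda$ (push forward Lebesgue on an unstable disk and take a Cesàro limit; the absolute continuity of conditional measures along $W^u$ survives because $E^u$ is uniformly expanded), and compactness is Proposition~\ref{Pro:property-u-state}. So $\cG_0 \neq \emptyset$, and by Corollary~\ref{Cor:ergodic-in-i} we may pick an ergodic $\mu_0 \in \cG_0$. If $\int \log\|Df|_{E_1^c}\|\,{\rm d}\mu_0 \le 0$, then Lemma~\ref{Lem:component-srb}(2) (applied with $i=0$) produces an SRB measure supported on $\Lambda$ and we are done. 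So assume $\lambda_1^c(\mu_0) > 0$ for every ergodic choice; then by Lemma~\ref{Lem:dominated-unstable} (the measurable version) the unstable manifold actually contains a disk tangent to $E^u \oplus E_1^c$, and one expects $\mu_0$ — or rather, a suitable member of $\cG_0$ obtained by averaging — to upgrade to $\cG_1$. The cleanest way to organize this is the following dichotomy at each level.

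The key step is an induction on $i$ from $0$ to $k$, where at stage $i$ we assume $\cG_i \neq \emptyset$ (true for $i=0$ by the previous paragraph) and split into three cases. Case (a): there is $\mu \in \cG_i$ with an ergodic component $\nu$ satisfying $\lambda_{i+1}^c(\nu) \le 0$; then by Lemma~\ref{Lem:component-srb}(1), $\nu$ is an SRB measure and we stop. Case (b): every $\mu \in \cG_i$ has all ergodic components with $\lambda_{i+1}^c > 0$, but the infimum of $\lambda_{i+1}^c(\nu)$ over ergodic components $\nu$ of measures $\mu \in \cG_i$ is zero; then choosing measures $\nu_n$ realizing this infimum and, if necessary, passing to a convex combination to package them into a single invariant measure, we obtain a member of $\cG_i^0$ (by definition of $\cG_i^0$ — it only asks for a sequence of ergodic components with $\lambda_{i+1}^c(\nu_n) \to 0$, plus positivity of all $\lambda_{i+1}^c$), so $\cG_i^0 \neq \emptyset$ and we stop. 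Case (c): the infimum is some $\alpha > 0$, i.e., $\lambda_{i+1}^c(\nu) \ge \alpha$ uniformly over all ergodic components of all $\mu \in \cG_i$; in this case I want to conclude $\cG_{i+1} \neq \emptyset$ so the induction can continue. If $i = k$ already, then $\cG_{i+1}$ is not defined, but Case (c) with $i=k$ simply says all ergodic components of all $\mu \in \cG_k$ have $\lambda_{k+1}^c \ge \alpha > 0$ where $E_{k+1}^c$ should be read as the next bundle — but in Theorem~\ref{maintheorem}'s splitting the bundle after $E_k^c$ is $E^s$, which is contracted, so $\lambda^s < 0 < \alpha$, contradicting Case (c); hence Case (c) cannot occur at $i=k$, meaning at the top level we are always in Case (a) or (b), which gives an SRB measure or $\cG_k^0 \neq \emptyset$.

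The real work — and the step I expect to be the main obstacle — is establishing in Case (c) that $\cG_i \neq \emptyset$ propagates to $\cG_{i+1} \neq \emptyset$. The natural attempt: take an ergodic $\mu \in \cG_i$; since all its Lyapunov exponents along $E^u \oplus E_1^c \oplus \cdots \oplus E_{i+1}^c$ are positive (the first $i$ by membership in $\cG_i$, the $(i{+}1)$-st by Case (c)), Lemma~\ref{Lem:dominated-unstable} gives, $\mu$-a.e., a local unstable disk tangent to $E^u \oplus \cdots \oplus E_{i+1}^c$, and on such disks the measure is already the image of an absolutely continuous measure. One then pushes forward Lebesgue on such a disk and takes a Cesàro limit $\mu'$; $\mu'$ lies in $\cG_0$ by the usual argument, and one must show it actually lies in $\cG_{i+1}$. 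Here the uniform lower bound $\alpha$ from Case (c) is essential: it is exactly the hypothesis of Theorem~\ref{Thm:criterion-in-i}, which says a limit of ergodic measures in $\cG_i$ with $\lambda_i^c$ bounded below by $\alpha$ stays in $\cG_i$ — applied one level up, a limit of ergodic measures in $\cG_{i+1}$ with $\lambda_{i+1}^c \ge \alpha$ stays in $\cG_{i+1}$. So the plan is: first produce ergodic measures in $\cG_{i+1}$ directly (from the $E^u\oplus\cdots\oplus E_{i+1}^c$-unstable disks, exploiting that Case (c) forces the extra expansion to be genuinely present and uniform, so the absolute continuity of the disintegration can be read off from the existing one in $\cG_i$ by the sub-foliation absolute continuity behind Proposition~\ref{Proposition:several-G}), and then invoke Theorem~\ref{Thm:criterion-in-i} at level $i+1$ to guarantee the accumulation point remains in $\cG_{i+1}$ rather than dropping. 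The subtlety is that an arbitrary $\mu \in \cG_i$ need not have positive $\lambda_{i+1}^c$ on a uniform scale along a single orbit even if the infimum over all such measures is $\alpha$; one must argue, using ergodicity of the components and the uniform bound, that the disk-pushforward construction can be carried out, and that is where the care lies. Once Case (c) is dispatched, the induction terminates in finitely many steps at the first $i$ where Case (a) or (b) holds (or at $i=k$, forced), yielding either an SRB measure on $\Lambda$ or some $\cG_i^0 \neq \emptyset$, which is the conclusion of the theorem.
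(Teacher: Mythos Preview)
Your inductive scheme is clean through Cases (a) and (b), and the termination at $i=k$ is correct (any ergodic component of a measure in $\cG_k$ is automatically SRB since $E^s$ is contracted). The gap is Case~(c): you need $\cG_{i+1}\neq\emptyset$ from the assumption that every ergodic component of every $\mu\in\cG_i$ has $\lambda_{i+1}^c\ge\alpha>0$, and your proposed mechanism does not deliver this. You write that ``the absolute continuity of the disintegration can be read off from the existing one in $\cG_i$ by the sub-foliation absolute continuity behind Proposition~\ref{Proposition:several-G}'', but that proposition goes the \emph{other} way: it says $\cG_i\supset\cG_{i+1}$, because absolute continuity along the larger plaques $W^{E^u\oplus\cdots\oplus E_{i+1}^c}$ descends to absolute continuity along the sub-plaques $W^{E^u\oplus\cdots\oplus E_i^c}$ via the absolutely continuous sub-foliation. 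There is no converse: a measure in $\cG_i$ with $\lambda_{i+1}^c>0$ has unstable manifolds tangent to $E^u\oplus\cdots\oplus E_{i+1}^c$, but nothing forces its transverse disintegration in the $E_{i+1}^c$ direction to be absolutely continuous. Likewise, pushing forward Lebesgue on an $(E^u\oplus\cdots\oplus E_{i+1}^c)$-disk and taking Ces\`aro limits only yields a Gibbs $u$-state; you cannot invoke Theorem~\ref{Thm:criterion-in-i} at level $i+1$ without already having a sequence of ergodic measures in $\cG_{i+1}$ (that theorem \emph{preserves} membership in a level under limits, it does not create it), and you cannot invoke the Case~(c) bound on the limit's components because that bound is only assumed for measures already in $\cG_i$.

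The paper's proof avoids this obstruction entirely by a different route. It takes a regular random perturbation $\{(G,\nu_n)\}$ of $f$, picks ergodic stationary measures $\mu_n$, and passes to a limit $\mu$ (a ``randomly ergodic limit''). The point is that each $\mu_n$, being absolutely continuous with respect to Lebesgue on $M$ (Lemma~\ref{Lem:regular-lebesgue}), lifts to a measure $\mu_n^G$ that is a Gibbs $E$-state for \emph{every} relevant bundle $E$ in the extended system. There is no need to climb from level $i$ to level $i+1$: the approximating sequence already lives at all levels at once, and one simply reads off, via Proposition~\ref{Pro:uniform-pesin-block} and Theorem~\ref{Thm:criterion-gibbs-E-extended}, the maximal $i$ for which the limit $\mu$ lands in $\cG_i$. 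The case analysis then forces $\mu\in\cG_i^0$ or produces an SRB component (Theorem~\ref{Thm:random-limit-G}). This is exactly the step the paper flags in the introduction: random perturbations are used to bound the level of the Gibbs $cu$-state from above, which your deterministic induction cannot do.
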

The proof of Theorem~\ref{Thm:non-empty} will use random perturbations, we will give its proof by Theorem~\ref{Thm:random-limit-G} and give the proof of Theorem~\ref{Thm:random-limit-G} in Section~\ref{Sec:prove-the-left}.

\begin{Theorem}\label{Thm:minimal-obtain-srb}\footnote{S. Crovisier helped us to clean some ideas of Theorem~\ref{Thm:minimal-obtain-srb}.}
Assume that $\Lambda$ is an attracting set of a $C^2$ diffeomorphism $f$ and $\Lambda$ admits a partially hyperbolic splitting $TM|_\Lambda=E^u\oplus_\succ E_1^c\oplus_\succ\cdots\oplus_\succ E_k^c\oplus_\succ E^c$ with $\dim E_i^c=1$ for $1\le i\le k$.
Choose $0\le i\le k$ satisfying $\cG_i^0\neq\emptyset$ and $\cG_j^0=\emptyset$ for any $j<i$. For any $\mu\in\cG_i^0$, taking $\{\nu_n\}$ a sequence of ergodic components of $\mu$ satisfying $\lim_{n\to\infty}\lambda_{i+1}^c(\nu_n)=0$. Then there is an ergodic component $\eta$ of $\nu=\lim_{n\to\infty}\nu_n$ such that $\eta$ is an SRB measure.

\end{Theorem}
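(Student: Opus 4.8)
The plan is to show that the weak-$*$ limit $\nu$ belongs to $\cG_i$ and then to extract an SRB ergodic component of $\nu$ from Lemma~\ref{Lem:component-srb}; the minimality of $i$ enters only to feed the criterion of Theorem~\ref{Thm:criterion-in-i}. First I would record the preliminary facts. Passing to a subsequence, assume $\nu_n\to\nu$; each $\nu_n$ is an ergodic component of $\mu$, hence is supported on $\Lambda$ and lies in $\cG_i$ by Corollary~\ref{Cor:ergodic-in-i}, thus in $\cG_0$ by Proposition~\ref{Proposition:several-G}, so $\nu$ is an $f$-invariant measure on $\Lambda$; since the set of Gibbs $u$-states is compact (Proposition~\ref{Pro:property-u-state}), $\nu\in\cG_0$. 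The one genuine computation is: $E_{i+1}^c$ is a one-dimensional subbundle of a dominated splitting, hence varies continuously on $\Lambda$, so $x\mapsto\log\|Df|_{E_{i+1}^c(x)}\|$ is continuous; by the Birkhoff theorem $\lambda_{i+1}^c(\nu_n)=\int\log\|Df|_{E_{i+1}^c}\|\,{\rm d}\nu_n$, and letting $n\to\infty$ with $\lambda_{i+1}^c(\nu_n)\to0$ gives $\int\log\|Df|_{E_{i+1}^c}\|\,{\rm d}\nu=0$.

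The core is an induction on $0\le j\le i$ proving: \emph{either some ergodic component of $\nu$ is an SRB measure, or $\nu\in\cG_j$.} The case $j=0$ is the reduction just made. For the inductive step, suppose $\nu\in\cG_j$ with $j<i$; by Corollary~\ref{Cor:ergodic-in-i} every ergodic component $\eta$ of $\nu$ lies in $\cG_j$. If some such $\eta$ satisfies $\lambda_{j+1}^c(\eta)\le0$, then by the domination $E_{j+1}^c\oplus_\succ\cdots\oplus_\succ E_k^c\oplus_\succ E^s$ all Lyapunov exponents of $\eta$ along $E_{j+1}^c\oplus\cdots\oplus E_k^c\oplus E^s$ are non-positive, so the positive exponents of $\eta$ lie entirely along $E^u\oplus E_1^c\oplus\cdots\oplus E_j^c$; hence the absolute continuity defining the Gibbs $(E^u\oplus\cdots\oplus E_j^c)$-state $\eta$ is absolute continuity along its full unstable foliation, which by Ledrappier--Young \cite{LeY85} is equivalent to Pesin's entropy formula for $\eta$, and since $h(\eta)>0$ (for $j\ge1$, $h(\eta)\ge\lambda_j^c(\eta)>0$; for $j=0$, $h(\eta)>0$ because $E^u$ is non-trivial), $\eta$ is an SRB measure --- this is precisely Lemma~\ref{Lem:component-srb}(1), whose hypothesis $I(\nu)=j$ holds here since the presence of such an $\eta$ prevents $\nu\in\cG_{j+1}$. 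Otherwise $\lambda_{j+1}^c(\eta)>0$ for every ergodic component $\eta$ of $\nu$; if $\inf_\eta\lambda_{j+1}^c(\eta)=0$ then $\nu\in\cG_j^0$, contradicting $\cG_j^0=\emptyset$ (valid because $j<i$), so there is $\alpha>0$ with $\lambda_{j+1}^c(\eta)\ge\alpha$ for every ergodic component $\eta$ of $\nu$; since $\{\nu_n\}\subset\cG_i\subseteq\cG_{j+1}$ is a sequence of ergodic measures with $\nu_n\to\nu$, Theorem~\ref{Thm:criterion-in-i} applied at level $j+1$ gives $\nu\in\cG_{j+1}$. This completes the induction.

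Running the induction up to $j=i$: if an SRB ergodic component of $\nu$ ever appeared we are done; otherwise $\nu\in\cG_i$. Since $\int\log\|Df|_{E_{i+1}^c}\|\,{\rm d}\nu=0$, the ergodic decomposition shows that not every ergodic component of $\nu$ has positive Lyapunov exponent along $E_{i+1}^c$, hence $\nu\notin\cG_{i+1}$ and so $I(\nu)=i$. As $\int\log\|Df|_{E_{i+1}^c}\|\,{\rm d}\nu=0\le0$, Lemma~\ref{Lem:component-srb}(2) now yields an ergodic component $\eta$ of $\nu$ which is an SRB measure, as asserted.

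I expect the main obstacle to be the point flagged in the introduction: $\cG_i$ is not closed, so one cannot simply pass to the limit in $\nu_n\to\nu$. The induction sidesteps this by climbing the filtration $\cG_0\supseteq\cG_1\supseteq\cdots\supseteq\cG_i$ one level at a time through Theorem~\ref{Thm:criterion-in-i}, whose hypothesis requires a \emph{uniform} positive lower bound on $\lambda_{j+1}^c$ over all ergodic components of $\nu$. The delicate point is that this uniform bound is exactly what the minimality of $i$ supplies: were it to fail at a level $j<i$, then $\nu$ itself would witness $\cG_j^0\neq\emptyset$, contradicting the choice of $i$; and the only other way it could fail --- some ergodic component $\eta$ with $\lambda_{j+1}^c(\eta)\le0$ --- is precisely the case in which the SRB measure has already been produced. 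A smaller technical point is the verification that the unstable manifold of a Gibbs $(E^u\oplus\cdots\oplus E_j^c)$-state with $\lambda_{j+1}^c\le0$ is tangent to $E^u\oplus\cdots\oplus E_j^c$, so that Ledrappier--Young applies as stated.
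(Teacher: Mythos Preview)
Your proof is correct and follows essentially the same approach as the paper's. Both arguments establish $\nu\in\cG_0$ via compactness of Gibbs $u$-states, then use the minimality of $i$ together with Theorem~\ref{Thm:criterion-in-i} to push $\nu$ up the filtration to $\cG_i$ (you phrase this as an explicit induction on $j$, the paper as a one-step contradiction on $I(\nu)=j<i$), and conclude via Lemma~\ref{Lem:component-srb} from $\int\log\|Df|_{E^c_{i+1}}\|\,{\rm d}\nu=0$.
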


\begin{proof}
By the properties of Gibbs $u$-states (Proposition~\ref{Pro:property-u-state}), we know that any $\nu_n$ and $\nu=\lim_{n\to\infty}\nu_n$ are Gibbs $u$-states, i.e., $\nu\in\cG_0$. Thus $I(\nu)$ can be defined. Since $\lim_{n\to\infty}\lambda_{i+1}^c(\nu_n)=0$, we have that
$$\int \log\|Df|_{E^{c}_{i+1}}\|{\rm d}\nu=0.$$
This implies that $I(\nu)\le i$. 

\begin{claim}

We have that either $I(\nu)=i$, or one ergodic component of $\nu$ is an SRB measure.

\end{claim}
\begin{proof}[Proof of the Claim]
Assume that the conclusion of this claim is not true, i.e. $I(\nu)=j<i$ and there is no SRB measures in the ergodic components of $\nu$. Thus, by Lemma~\ref{Lem:component-srb}, we have that $\lambda_{j+1}^c(\eta)>0$ for any ergodic component $\eta$ of $\nu$.

By the minimality of $i$, we have that there is a constant $\alpha>0$ such that $\lambda_{j+1}^c(\eta)>\alpha>0$ for any ergodic component $\eta$ of $\nu$. Otherwise, we have that $\cG_j^0\neq\emptyset$ and give a contradiction to the minimality of $i$.

By Theorem~\ref{Thm:criterion-in-i}, we have that $\nu\in \cG_{j+1}$. This contradicts to the fact that $I(\nu)=j$.

\end{proof}

Under the condition that $I(\nu)=i$, then by Lemma~\ref{Lem:component-srb}, the ergodic components of $\nu$ contains an SRB measure since we have that $\int \log\|Df|_{E^{c}_{i+1}}\|{\rm d}\nu=0.$ Thus one can conclude by applying the above Claim.

\end{proof}
\begin{proof}[Proof of Theorem~\ref{maintheorem}] Under the setting of Theorem~\ref{maintheorem}, by Theorem~\ref{Thm:non-empty}, either there is an SRB measure supported on $\Lambda$, or there is $i$ such that $\cG_i^0\neq\emptyset$. 

Now we consider the case that $\cG_i^0\neq\emptyset$ for some $i$. Take a minimal $i$ with this property, i.e. $\cG_i^0\neq\emptyset$ but $\cG_j^0=\emptyset$ for any $j<i$. Then by Theorem~\ref{Thm:minimal-obtain-srb}, one can also get an SRB measure. Thus the proof of Theorem~\ref{maintheorem} is complete.
\end{proof}

We will give the proofs of Theorem~\ref{Thm:criterion-in-i} and Theorem~\ref{Thm:non-empty} in next sections. Note that Theorem~\ref{Thm:criterion-in-i} is used to prove Theorem~\ref{Thm:minimal-obtain-srb}.

\section{Random dynamical systems and random perturbations}

The main issue for proving Theorem~\ref{maintheorem} is to do some random perturbation for a deterministic dynamical system. One can see fundamental knowledge of random dynamical systems and random perturbations in \cite{Kif86,Kif88,LiQ95}.

\bigskip
Recall that ${\rm Diff}^r(M)$ is the space of $C^r$ diffeomorphisms.
\begin{Definition}\label{Def:random-dynamical-system}

Let $\Omega$ be a compact metric space, $\ell:\Omega\to{\rm Diff}^2(M)$ be a continuous map. Denote by $f_\omega=\ell(\omega)$
for each $\omega\in\Omega$.

For each $\underline\omega=(\cdots,\omega_{-1},\dot\omega_0,\omega_1,\cdots)\in\Omega^\ZZ$, it defines a sequence of diffeomorphisms $f_{\underline\omega}=\{\cdots,f_{\omega_{-1}},\dot f_{\omega_0},f_{\omega_1},\cdots\}$. A point in $\Omega^\ZZ\times M$ is denoted by $[\underline\omega,x]$.

One can thus define an extended dynamical system on a compact metric space $\Omega^\ZZ\times M$ in the following way:
\begin{align*}
G: \Omega^{\mathbb{Z}}\times M & ~~\longrightarrow ~~\Omega^{\mathbb{Z}}\times M \\
[\underline{\omega}~,~x] &~~\longmapsto ~~[\sigma(\underline{\omega}),f_{\omega_0}(x)],
\end{align*}
where $\sigma$ is the left shift operator on the space $\Omega^\ZZ$.

We say that $G$ is an \emph{extended dynamical system} generated by $(\Omega,\ell)$. When there is a Borel probability $\nu$ on $\Omega$, then $G$ is also called a \emph{random dynamical system} with randomness $\nu$, or $(G,\nu)$ is a random dynamical system generated by $(\Omega,\ell,\nu)$.
\end{Definition}

When $\Omega$ is reduced to be a point, the extended dynamical system $G$ can be identical to be the dynamical system of a diffeomorphism.

We will consider stationary measures of a random dynamical system. 

\begin{Definition}\label{Def:stationary-measure}
For a measure $\nu$ supported on $\Omega$, a measure $\mu$ supported on $M$ is called a \emph{stationary measure}  of $\nu$ if for any Borel set $A$, we have
$$\mu(A)=\int \mu(f_\omega^{-1}(A)){\rm d}\nu(\omega).$$

\end{Definition}

\begin{Remark}
The measure $\mu$ is in fact said to be the stationary measure of a random process generated by $\Omega$, $\ell$ and $\nu$. One can see \cite[Chapter I]{Kif86} for the discussion of the random process.

\end{Remark}

A Borel set $A$ is called \emph{randomly invariant} (for $\nu$ and $\mu$) if for $\mu$-almost every $x$, we have
{
$$
x\in A\quad \textrm{implies}\quad  f_\omega(x)\in A,\quad \nu-a.e.\quad \omega;
$$
$$
x\notin A\quad \textrm{implies}\quad  f_\omega(x)\notin A,\quad \nu-a.e.\quad \omega.
$$
}

A stationary measure $\mu$ is \emph{ergodic} if for any randomly invariant set $A$, we have that $\mu(A)=0$ or $\mu(A)=1$.

\begin{Theorem}\label{Thm:ergodic-stationary-exist}

Ergodic stationary measure for $\nu$ always exists.  

\end{Theorem}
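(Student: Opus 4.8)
The plan is to first establish the existence of \emph{some} stationary measure by a standard Krylov--Bogolyubov-type argument applied to the Markov operator associated with $(\Omega,\ell,\nu)$, and then pass to an ergodic one by a Choquet/ergodic-decomposition argument on the (convex, compact) set of stationary measures.

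\emph{Existence of a stationary measure.} Consider the transfer (Markov) operator $P$ acting on the space $C(M)$ of continuous functions by $(P\varphi)(x)=\int_\Omega \varphi(f_\omega(x))\,{\rm d}\nu(\omega)$; since $\Omega\ni\omega\mapsto f_\omega\in{\rm Diff}^2(M)$ is continuous and $\Omega$ is compact, $(x,\omega)\mapsto\varphi(f_\omega(x))$ is continuous and bounded, so $P\varphi\in C(M)$ and $\|P\varphi\|_\infty\le\|\varphi\|_\infty$. Its dual $P^{*}$ acts on the space $\cM(M)$ of Borel probability measures on $M$ (which is compact in the weak-$*$ topology because $M$ is compact), and a probability measure $\mu$ is stationary in the sense of Definition~\ref{Def:stationary-measure} exactly when $P^{*}\mu=\mu$, i.e. $\int P\varphi\,{\rm d}\mu=\int\varphi\,{\rm d}\mu$ for all $\varphi\in C(M)$. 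Fix any $\mu_0\in\cM(M)$ and form the Ces\`aro averages $\mu_N=\frac1N\sum_{n=0}^{N-1}(P^{*})^{n}\mu_0$. By compactness of $\cM(M)$ some subsequence $\mu_{N_j}$ converges weak-$*$ to a limit $\mu$; since $P^{*}$ is weak-$*$ continuous (again by continuity and boundedness of $P$ on $C(M)$), the telescoping estimate $\|P^{*}\mu_N-\mu_N\|\le \tfrac2N$ (in the sense of the action on any fixed $\varphi\in C(M)$) gives $P^{*}\mu=\mu$. Hence the set $\cS$ of stationary measures of $\nu$ is nonempty; it is moreover convex and weak-$*$ closed, hence compact.

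\emph{Producing an ergodic stationary measure.} The set $\cS$ is a nonempty compact convex subset of the locally convex space $\cM(M)$, so by the Krein--Milman theorem it has an extreme point $\mu$. I claim any extreme point of $\cS$ is ergodic in the sense defined above. Indeed, suppose $A$ is randomly invariant for $\nu$ and $\mu$ with $0<\mu(A)<1$. Using the randomly invariant property, one checks that the normalized restrictions $\mu_A=\tfrac{1}{\mu(A)}\mu|_A$ and $\mu_{A^{c}}=\tfrac{1}{1-\mu(A^{c})}\mu|_{A^{c}}$ are again stationary: for a Borel set $B$, randomly-invariance gives $f_\omega^{-1}(A)=A$ up to a $\mu$-null set for $\nu$-a.e.\ $\omega$, so $\mu(f_\omega^{-1}(B\cap A))=\mu(f_\omega^{-1}(B)\cap A)$ and integrating against ${\rm d}\nu$ yields $\mu(B\cap A)=\int\mu(f_\omega^{-1}(B)\cap A)\,{\rm d}\nu(\omega)$, i.e.\ $\mu_A$ is stationary, and likewise for $\mu_{A^{c}}$. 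Then $\mu=\mu(A)\,\mu_A+(1-\mu(A))\,\mu_{A^{c}}$ is a nontrivial convex combination of two distinct elements of $\cS$, contradicting extremality. Therefore $\mu$ is an ergodic stationary measure.

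\emph{Main obstacle.} The conceptually routine part is the Krylov--Bogolyubov step; the point that needs genuine care is the verification that the restriction of a stationary $\mu$ to a randomly invariant set is itself stationary, since the definition of randomly invariant is phrased fiberwise ($x\in A\Rightarrow f_\omega(x)\in A$ for $\nu$-a.e.\ $\omega$) rather than as a single invariance identity, and one must convert this into the statement $f_\omega^{-1}(A)=A$ modulo $\mu$-null sets for $\nu$-a.e.\ $\omega$ (using both implications in the definition, including the one for $x\notin A$) before the disintegration argument goes through. One could alternatively obtain ergodicity via the ergodic decomposition of the shift-invariant skew-product measure $\nu^{\ZZ}\times\mu$ for $G$ on $\Omega^{\ZZ}\times M$ and project back to $M$; this reduces the statement to the classical ergodic decomposition theorem, at the cost of keeping track of how ergodic components of the skew product correspond to ergodic stationary measures on $M$ (see \cite[Chapter I]{Kif86} and \cite{LiQ95}).
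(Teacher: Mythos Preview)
Your proposal is correct and follows essentially the same route the paper indicates: the paper's proof simply cites the existence of stationary measures (\cite[Lemma~2.2]{Kif86}, \cite[Proposition~5.6]{Via14}) and the ergodic decomposition of stationary measures (\cite[Appendix~A.1]{Kif86}, \cite[Theorem~5.14]{Via14}), and your Krylov--Bogolyubov step plus the Krein--Milman/extreme-point argument is exactly the standard content of those references spelled out. One typographical slip: in the normalization of $\mu_{A^c}$ you wrote $\tfrac{1}{1-\mu(A^{c})}$ where you meant $\tfrac{1}{1-\mu(A)}$ (equivalently $\tfrac{1}{\mu(A^{c})}$).
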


\begin{proof}

The proof follows from the existence of stationary measures (\cite[Lemma 2.2]{Kif86} and \cite[Proposition 5.6]{Via14}) and the ergodic decomposition theorem of stationary measures \cite[Appendex A.1]{Kif86} and \cite[Theorem 5.14]{Via14}).
\end{proof}

The map $\ell:\Omega\to {\rm Diff}^2(M)$ in fact induces a map from $\Omega\times M$ to $M$, which is also denoted by $\ell$:
\begin{align*}
\ell: \Omega\times M & ~~\longrightarrow ~~M \\
(\omega~,~x) &~~\longmapsto ~~f_\omega(x).
\end{align*}
Thus for any $x\in M$, one obtains a map $\ell_x:\Omega\to M$. For any measure $\nu$ supported on $\Omega$, one has the measure $(\ell_x)_*\nu$ on $M$:
$$(\ell_x)_*\nu(A)=\nu(\ell_x^{-1}(A)).$$

A random dynamical system $(G,\nu)$ generated by $(\Omega,\ell,\nu)$ is \emph{regular} if for any $x\in M$, $(\ell_x)_*\nu$ is absolutely continuous with respect to the Lebesgue measure. Regular random dynamical systems have the following good property. The proof is folklore and is omitted here. 
\begin{Lemma}\label{Lem:regular-lebesgue}
If a random dynamical system is regular, then any stationary measure is absolutely continuous with respect to Lebesgue.
\end{Lemma}

\begin{Definition}\label{Def:random-perturbation}
A sequence of random dynamical systems $\{(G,\nu_n)\}_{n\in\NN}$ generated by  $\{(\Omega,\ell,\nu_n)\}_{n\in\NN}$ is \emph{nested} if ${\rm supp}(\nu_{n+1})\subset {\rm supp}(\nu_n)$ for any $n\in\NN$. For a diffeomorphism $f$, a nested sequence of regular random dynamical systems $\{(G,\nu_n)\}_{n\in\NN}$ generated by $(\Omega,\ell,\nu_n)\}$ is a \emph{random perturbation} of $f$ if $\lim_{n\to\infty}{\rm supp}(\nu_n)=\{\omega\}$ such that $\ell(\omega)=f$.
\end{Definition}

\begin{Theorem}\label{Thm:perturbation-existence}
For any $C^2$ diffeomorphism $f$, there is a regular random perturbation of $f$.
\end{Theorem}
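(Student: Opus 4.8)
The plan is to realise the random perturbation by an explicit finite-dimensional parametrised family of diffeomorphisms coming from a single smooth map that is a submersion in the phase variable. First I would embed $M$ as a closed $C^\infty$ submanifold of some $\RR^N$ (Whitney), fix a tubular neighbourhood $\cN\subset\RR^N$ of $M$ together with the associated $C^\infty$ retraction $\pi\colon\cN\to M$, which, after shrinking $\cN$, we may assume is a submersion with $\pi|_M={\rm id}_M$. Since $M$ is compact and $f$ continuous, there is $\rho_0>0$ with $f(x)+\omega\in\cN$ for all $x\in M$ and all $\omega\in\RR^N$, $|\omega|\le\rho_0$. For such $\omega$ set $f_\omega(x):=\pi\big(f(x)+\omega\big)$ and $\ell(\omega):=f_\omega$. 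Then $\ell(0)=\pi\circ f=f$ (as $f(x)\in M$), and $\omega\mapsto f_\omega$ is continuous from $\overline{B_{\rho_0}(0)}\subset\RR^N$ into $C^2(M,M)$ with the $C^2$ topology (uniform continuity of $D\pi$ and $D^2\pi$ on a compact neighbourhood $\overline{\cN'}$ of $M$, $\overline{\cN'}\subset\cN$, together with $f\in C^2$ and $M$ compact, makes $\|f_\omega-f_{\omega_0}\|_{C^2}\to 0$). Shrinking $\rho_0$ to some $\rho>0$ we may assume every $f_\omega$ is $C^2$-close enough to $f$ to be a diffeomorphism, so $\ell\colon\Omega:=\overline{B_\rho(0)}\to{\rm Diff}^2(M)$ is a continuous map on a compact metric space.

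Next I would verify the submersion property that produces regularity. For fixed $x\in M$ the map $\ell_x\colon\Omega\to M$, $\omega\mapsto\pi(f(x)+\omega)$, has derivative $D_\omega\ell_x=(D\pi)_{f(x)+\omega}$, which is onto $T_{f_\omega(x)}M$ because $\pi$ is a submersion on $\cN$; thus $\ell_x$ is a submersion on $\Omega$, and this holds simultaneously for \emph{every} $x\in M$ with the same $\Omega$. It then suffices to record the elementary fact that a submersion pushes absolutely continuous measures to absolutely continuous measures: by the submersion theorem, $\ell_x$ is locally $C^\infty$-conjugate to the linear projection $\RR^N=\RR^{\dim M}\times\RR^{N-\dim M}\to\RR^{\dim M}$, under which the push-forward of an absolutely continuous measure is absolutely continuous by Fubini; covering $\Omega$ by countably many such charts and summing, $(\ell_x)_*\mu$ is absolutely continuous with respect to the volume measure of $M$ whenever $\mu$ is absolutely continuous with respect to Lebesgue on $\RR^N$.

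Finally I would choose the randomness. Let $\nu_n$ be the normalised restriction of Lebesgue measure on $\RR^N$ to $\overline{B_{\rho/n}(0)}$. Each $\nu_n$ is absolutely continuous with respect to Lebesgue on $\RR^N$, hence by the previous paragraph each $(\ell_x)_*\nu_n$ is absolutely continuous with respect to volume on $M$; thus every $(G,\nu_n)$ is a regular random dynamical system. Moreover ${\rm supp}(\nu_n)=\overline{B_{\rho/n}(0)}$ is decreasing in $n$, so the sequence $\{(G,\nu_n)\}_{n\in\NN}$ is nested, and $\lim_{n\to\infty}{\rm supp}(\nu_n)=\bigcap_{n\ge 1}\overline{B_{\rho/n}(0)}=\{0\}$ with $\ell(0)=f$. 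Therefore $\{(G,\nu_n)\}_{n\in\NN}$ is a regular random perturbation of $f$ in the sense of Definition~\ref{Def:random-perturbation}.

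There is no deep obstacle here; the only points requiring care are making the quantifiers uniform over the compact phase space $M$ — that a single radius $\rho$ works for all $x$ both for ``$f_\omega\in{\rm Diff}^2(M)$'' and for ``$\ell_x$ is a submersion'' — and stating cleanly the coarea-type fact used for regularity. I would also mention an embedding-free variant: fix finitely many $C^\infty$ vector fields $X_1,\dots,X_m$ on $M$ spanning $T_xM$ at every $x$ (possible by compactness of $M$) and set $f_\omega=\phi^{X_1}_{t_1}\circ\cdots\circ\phi^{X_m}_{t_m}\circ f$ for $\omega=(t_1,\dots,t_m)$ near $0$; then $D_0\ell_x\colon(s_j)_j\mapsto\sum_j s_jX_j(f(x))$ is onto $T_{f(x)}M$, so by continuity $\ell_x$ is a submersion for all $x$ once $|\omega|$ is small, and the same uniform-continuity and coarea arguments give the conclusion.
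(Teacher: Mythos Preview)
Your proof is correct. The paper only sketches this classical construction, and in fact uses precisely the ``embedding-free variant'' you describe at the end: choose finitely many smooth vector fields $X^1,\dots,X^k$ spanning $T_xM$ at every point, set $\Omega=[-1,1]^k$, $f_\omega=\varphi^1_{t_1}\circ\cdots\circ\varphi^k_{t_k}\circ f$, and take $\nu_n$ the normalised Lebesgue measure on $[-1/n,1/n]^k$.

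Your main argument via Whitney embedding and the tubular retraction $\pi$ is a genuinely different (and equally standard) route. The two constructions trade off in the obvious way: the flow construction stays intrinsic to $M$ and makes the map $\ell$ automatically land in ${\rm Diff}^2(M)$ for all parameters (flows of smooth vector fields are diffeomorphisms), whereas your embedding construction makes the submersion property of $\ell_x$ immediate (it is literally $D\pi$) and yields a very clean description of $f_\omega$, at the cost of invoking openness of ${\rm Diff}^2(M)$ in $C^2(M,M)$ to shrink $\rho$. Both reduce regularity to the same coarea/Fubini fact that submersions push Lebesgue-absolutely-continuous measures to absolutely continuous measures. Since you already record the vector-field variant and note that $D_0\ell_x$ is onto for the same reason, your write-up in fact subsumes the paper's sketch.
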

The proof of Theorem~\ref{Thm:perturbation-existence} is classical and contained in \cite[Page 1120]{CoY05}. The idea is to find (possibly many) vector fields $X^1,X^2,\cdots,X^k$ on $M$ such that they span the tangent space everywhere. Then we take $\Omega=[-1,1]^d$ and $v_n$  the normalized Lebesgue measure on $[-1/n,1/n]^d$. The composition $\varphi^1_{t_1}\circ\varphi^2_{t_2}\cdots\circ\varphi^k_{t_k}\circ f$ gives a regular random perturbation of $f$, where $\varphi^i$ is the flow generated by $X^i$ for $1\le i\le k$.

\bigskip

The following proposition could be seen as an exercise.
\begin{Proposition}\label{Pro:limit-in-lambda} 
Let $\{(G,\nu_n)\}_{n\in\NN}$ be a random perturbation of a diffeomorphism $f$. If $\mu_{n}$ is a stationary measure of $(G,\nu_n)$, then all accumulation points of $\{\mu_n\}$ are $f$-invariant measures. Moreover, if $\mu_n$ is contained in a small neighborhood of $\Lambda$, then $\mu$ is an invariant measure supported on $\Lambda$.
\end{Proposition}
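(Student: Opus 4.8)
The plan is to exploit the defining properties of a random perturbation: the maps $\ell(\omega)$ are continuous in $\omega$, the noise supports ${\rm supp}(\nu_n)$ shrink to $\{\omega\}$ with $\ell(\omega)=f$, and each $\mu_n$ satisfies the stationarity identity $\mu_n(A)=\int\mu_n(f_\omega^{-1}A)\,{\rm d}\nu_n(\omega)$. First I would pass to a subsequence so that $\mu_{n_j}\to\mu$ weakly$^*$; such an accumulation point exists since $M$ is compact. The goal is to show $f_*\mu=\mu$. The natural route is to test against an arbitrary continuous function $\varphi\in C^0(M)$: stationarity gives
\[
\int \varphi\,{\rm d}\mu_n=\int_\Omega\int_M \varphi(f_\omega(x))\,{\rm d}\mu_n(x)\,{\rm d}\nu_n(\omega).
\]
I would then estimate the right-hand side by writing $\varphi(f_\omega(x))=\varphi(f(x))+\big(\varphi(f_\omega(x))-\varphi(f(x))\big)$. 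The first term integrates to $\int \varphi\circ f\,{\rm d}\mu_n$; the second term is controlled in absolute value by $\sup_{x\in M}|\varphi(f_\omega(x))-\varphi(f(x))|$, which tends to $0$ uniformly for $\omega\in{\rm supp}(\nu_n)$ as $n\to\infty$, because $\varphi$ is uniformly continuous, $f_\omega\to f$ uniformly on $M$ (by continuity of $\ell$ together with ${\rm supp}(\nu_n)\to\{\omega\}$). Hence
\[
\Big|\int \varphi\,{\rm d}\mu_n-\int \varphi\circ f\,{\rm d}\mu_n\Big|\le \sup_{\omega\in{\rm supp}(\nu_n)}\ \sup_{x\in M}\big|\varphi(f_\omega(x))-\varphi(f(x))\big|\xrightarrow[n\to\infty]{}0.
\]
Letting $j\to\infty$ along the convergent subsequence and using that both $\varphi$ and $\varphi\circ f$ are continuous, I get $\int\varphi\,{\rm d}\mu=\int\varphi\circ f\,{\rm d}\mu$ for every $\varphi\in C^0(M)$, which is exactly $f_*\mu=\mu$.

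For the last sentence of the statement I would argue as follows. Suppose $\mu_n$ is supported in a fixed neighborhood $U$ of $\Lambda$ with $f(\overline U)\subset U$ and $\bigcap_{m\ge 0}f^m(U)=\Lambda$; shrinking $U$ if necessary (and using that ${\rm supp}(\nu_n)\to\{\omega\}$) we may assume that for all large $n$ and all $\omega\in{\rm supp}(\nu_n)$ one still has $f_\omega(\overline U)\subset U$, so that ${\rm supp}(\mu_n)\subset U$ forces ${\rm supp}(\mu_n)\subset \bigcap_{m\ge0} f^m(U)$ is not immediate for the perturbed system, but at least ${\rm supp}(\mu_n)\subset\overline U$. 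Passing to the limit, ${\rm supp}(\mu)\subset\overline U$. Since $\mu$ is $f$-invariant, ${\rm supp}(\mu)$ is an $f$-invariant compact subset of $\overline U$, hence ${\rm supp}(\mu)=\bigcap_{m\ge0}f^m({\rm supp}(\mu))\subset\bigcap_{m\ge0}f^m(\overline U)$. Choosing the neighborhoods so that $\bigcap_{m\ge0}f^m(\overline U)=\Lambda$ (possible by the definition of attracting set), we conclude ${\rm supp}(\mu)\subset\Lambda$, so $\mu$ is supported on $\Lambda$.

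I expect the only genuine subtlety to be the second part: one must be careful that the perturbed diffeomorphisms $f_\omega$, for $\omega\in{\rm supp}(\nu_n)$ and $n$ large, still map the trapping region into itself, so that the support of the stationary measure cannot leak out of a neighborhood of $\Lambda$; this uses the $C^0$-closeness of $f_\omega$ to $f$ and the openness of the trapping condition $f(\overline U)\subset U$. The first part — invariance of every accumulation point — is a routine weak$^*$-limit argument with the uniform-continuity estimate above, and presents no real difficulty.
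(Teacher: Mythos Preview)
Your argument is correct and is exactly the standard one; the paper itself omits the proof entirely, remarking only that ``the following proposition could be seen as an exercise.'' Both parts of your proposal---the weak$^*$ limit argument via stationarity and uniform continuity for $f$-invariance, and the trapping-region argument for ${\rm supp}(\mu)\subset\Lambda$---are the expected route and present no gaps.
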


In this paper, we will consider the limit of a sequence of ergodic stationary measures of a regular perturbation of $f$. The limit measure is not necessarily ergodic. However, we will call it an \emph{ergodic limit}.

\begin{Definition}\label{Def:ergodiclimit}
For an invariant measure $\mu$ of a $C^2$ diffeomorphism $f$, if there is a regular random perturbation $\{(G,\nu_n)\}_{n\in\NN}$ of $f$ such that there is a sequence of ergodic stationary measure $\mu_{n}$ of $(G,\nu_n)$, and 
$$\mu=\lim_{n\to\infty}\mu_{n},$$
then $\mu$ is said to be a \emph{randomly ergodic limit}.

\end{Definition}

One has the following extended version of Theorem~\ref{Thm:non-empty}.

\begin{Theorem}\label{Thm:random-limit-G}
Assume that $\Lambda$ is an attracting set of a $C^2$ diffeomorphism $f$ and $\Lambda$ admits a partially hyperbolic splitting $TM|_{\Lambda}=E^u\oplus_\succ E_1^c\oplus_\succ\cdots\oplus_\succ E_k^c\oplus_\succ E^s$ with $\dim E_j^c=1$, $1\le j\le k$. Assume that $\mu$ is a randomly ergodic limit supported on $\Lambda$, then either there is an ergodic component $\nu$ of $\mu$ such that $\nu$ is an SRB measure, or there is $0\le i\le k$ such that $\mu\in \cG_i^0$.

\end{Theorem}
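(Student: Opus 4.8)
The plan is to combine the basic structure of a Gibbs $u$-state with a downward induction on the index, using Theorem~\ref{Thm:criterion-in-i} as the engine. First I would observe that since $\mu$ is a randomly ergodic limit, there is a regular random perturbation $\{(G,\nu_n)\}$ of $f$ and ergodic stationary measures $\mu_n$ with $\mu_n\to\mu$. By the standard Gibbs $u$-state theory for random perturbations (as in \cite{CoY05,LiQ95}: the stationary measures of a regular perturbation have absolutely continuous disintegrations along the unstable foliation, and this passes to limits because $\Lambda$ is an attracting set with partial hyperbolicity, exactly as in Proposition~\ref{Pro:property-u-state}) one gets $\mu\in\cG_0$. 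Hence $I(\mu)$ is well defined; call it $j$, so $\mu\in\cG_j$ and $\mu\notin\cG_{j+1}$ when $j<k$.

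Next I would split into cases according to whether $\mu$ has an ergodic component with a nonpositive exponent along some relevant center bundle. If some ergodic component $\eta$ of $\mu$ satisfies $\lambda_{j+1}^c(\eta)\le 0$, then by item (1) of Lemma~\ref{Lem:component-srb} (which applies because $I(\mu)=j$ forces $\eta\in\cG_j$ by Corollary~\ref{Cor:ergodic-in-i} and Proposition~\ref{Proposition:several-G}) the component $\eta$ is an SRB measure, and we are in the first alternative. So assume $\lambda_{j+1}^c(\eta)>0$ for every ergodic component $\eta$ of $\mu$; in particular $j<k$, since otherwise all center exponents are positive and, together with $\mu\in\cG_k$, Lemma~\ref{Lem:component-srb}-type reasoning on $E^s$ would already give an SRB measure. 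Now I claim $\mu\in\cG_j^0$, i.e. there is a sequence $\nu_m$ of ergodic components of $\mu$ with $\lambda_{j+1}^c(\nu_m)\to 0$. Suppose not: then there is $\alpha>0$ with $\lambda_{j+1}^c(\eta)\ge\alpha$ for all ergodic components $\eta$ of $\mu$. But then the hypotheses of Theorem~\ref{Thm:criterion-in-i} are met with $i=j+1$, provided $\mu$ is a limit of ergodic measures in $\cG_{j+1}$; this is precisely the point where the random-perturbation input is needed, because a priori $\mu_n\in\cG_0$ only. So the real work is to show that the approximating measures can be upgraded to lie in $\cG_{j+1}$ with uniformly positive $(j+1)$-exponent, contradicting $I(\mu)=j$.

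To carry that out I would argue as follows. Because $\lambda_{j+1}^c(\eta)\ge\alpha$ uniformly over ergodic components of $\mu$, by semicontinuity of the Lyapunov functional $\int\log\|Df|_{E^c_{j+1}}\|\,d(\cdot)$ and the convergence $\mu_n\to\mu$, the integrals $\int\log\|Df|_{E^c_{j+1}}\|\,d\mu_n$ are bounded below by a positive constant for large $n$; the same holds, via Oseledets applied to the random dynamical system $(G,\nu_n)$ and the fact that $\mu_n$ is ergodic as a stationary measure, for the random Lyapunov exponent of $\mu_n$ along $E^c_{j+1}$ — hence each $\mu_n$ has all its center exponents up through level $j+1$ strictly positive, so $W^{E^u\oplus\cdots\oplus E^c_{j+1},u}$ exists $\mu_n$-a.e. by Lemma~\ref{Lem:dominated-unstable}. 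Then a Pesin–Ledrappier–Young / Ledrappier–Strelcyn argument for regular random perturbations (absolute continuity of the disintegration is automatic for stationary measures of regular systems, cf. Lemma~\ref{Lem:regular-lebesgue} together with the conditional-measure analysis of \cite{LiQ95}) shows $\mu_n\in\cG_{j+1}$. Applying Theorem~\ref{Thm:criterion-in-i} with $i=j+1$ to the sequence $\{\mu_n\}\subset\cG_{j+1}$ and the uniform bound $\lambda_{j+1}^c(\nu)\ge\alpha$ on ergodic components of $\mu$, we conclude $\mu\in\cG_{j+1}$, contradicting $I(\mu)=j$. Therefore the bad alternative cannot occur, $\mu\in\cG_j^0$, and the theorem follows with $i=j$.

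The main obstacle I anticipate is the middle step: establishing that the ergodic stationary measures $\mu_n$ genuinely lie in $\cG_{j+1}$ (not merely in $\cG_0$), i.e. that absolute continuity of the disintegration propagates from the strong-unstable foliation up through the first $j+1$ center sub-bundles for the \emph{random} system, with exponent estimates uniform in $n$. This requires the full strength of the Ledrappier–Young theory in the random setting from \cite{LiQ95}, careful use of the domination $E^u\oplus_\succ E^c_1\oplus_\succ\cdots$ to build the nested unstable laminations, and the regularity of the perturbation to get absolute continuity for free; matching these uniform estimates to the hypothesis $\lambda_{j+1}^c(\nu)\ge\alpha$ needed by Theorem~\ref{Thm:criterion-in-i} is the delicate bookkeeping at the heart of the proof.
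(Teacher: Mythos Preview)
Your proposal has the right overall shape --- show $\mu\in\cG_0$, pick an index, and use uniform positivity of the next center exponent to push $\mu$ into a higher $\cG$ --- but there is a genuine type mismatch at the key step. You want to feed the stationary measures $\mu_n$ into Theorem~\ref{Thm:criterion-in-i}, but that theorem is stated for $f$-invariant ergodic measures lying in $\cG_{j+1}$; the $\mu_n$ are not $f$-invariant (they are stationary for the random system $(G,\nu_n)$), and the sets $\cG_i$ are not even defined for them. The paper resolves this by working entirely in the extended system $G$ on $\Omega^\ZZ\times M$: the lifts $\mu_n^G$ are $G$-ergodic Gibbs $E$-states (via absolute continuity of $\mu_n$ from Lemma~\ref{Lem:regular-lebesgue} and the random Ledrappier--Young theory of \cite{LiQ95}), and the paper proves a $G$-version of the limit criterion (Theorem~\ref{Thm:criterion-gibbs-E-extended}) together with a uniform Pesin-block estimate $\liminf_n\mu_n^G(\Lambda_\ell^G(E,\alpha))>1-\varepsilon$ (Proposition~\ref{Pro:uniform-pesin-block}, resting on a Pliss-type lemma). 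Theorem~\ref{Thm:criterion-in-i} is then the \emph{deterministic specialization} of this machinery; it is a sibling of Theorem~\ref{Thm:random-limit-G}, not an input to it, so your reduction runs in the wrong direction.

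There is also a structural difference in how the index is selected. You set $j=I(\mu)$ and argue by contradiction that $\mu\in\cG_{j+1}$. The paper instead chooses $i$ to be the largest index with $\lambda_{i+1}^c(\nu)\ge 0$ for every ergodic component $\nu$ of $\mu$; domination then forces $\lambda_i^c(\nu)>\alpha$ uniformly, and one proves $\mu\in\cG_i$ \emph{directly} from the extended-system tools above, without assuming anything about $I(\mu)$. The three subcases ($\lambda_{i+1}^c=0$ for some component; $\lambda_{i+1}^c\ge\alpha$ uniformly; $\lambda_{i+1}^c>0$ with infimum $0$) then give, respectively, an SRB component, a contradiction to the maximality of $i$, and $\mu\in\cG_i^0$. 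Finally, note that your semicontinuity remark only yields a lower bound on $\int\log\|Df|_{E^c_{j+1}}\|\,d\mu_n$; it does not give the uniform Pesin-block control that Theorem~\ref{Thm:criterion-gibbs-E-extended} actually requires --- that is the content of Proposition~\ref{Pro:uniform-pesin-block}, and it is what your ``delicate bookkeeping'' paragraph would have to become.
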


One can give the proof of Theorem~\ref{Thm:non-empty} by assuming Theorem~\ref{Thm:random-limit-G}.

\begin{proof}[Proof of Theorem~\ref{Thm:non-empty}]

By Theorem~\ref{Thm:perturbation-existence}, there is a sequence of regular random perturbation $\{(G,\nu_n)\}_{n\in\NN}$ of $f$. By Theorem~\ref{Thm:ergodic-stationary-exist}, each $(G_n,\nu_n)$ has an ergodic stationary measure $\mu_n$. After a subsequence, one can assume that $\{\mu_n\}$ converges to a measure $\mu$. By Proposition~\ref{Pro:limit-in-lambda}, $\mu$ is a randomly ergodic limit supported on $\Lambda$. By Theorem~\ref{Thm:random-limit-G}, 
\begin{itemize}
\item either there is an ergodic component $\nu$ of $\mu$ such that $\nu$ is an SRB measure, thus there is an SRB measure supported on $\Lambda$,

\item or $\mu\in\cG_i^0$, in other words, $\cG_i^0\neq\emptyset$ for some $0\le i\le k$.

\end{itemize}

The proof of Theorem~\ref{Thm:non-empty} is complete.
\end{proof}

It remains to prove Theorem~\ref{Thm:criterion-in-i} and Theorem~\ref{Thm:random-limit-G} in next sections.

\section{Good approximations of Pesin blocks}

We define some canonical projections on $\Omega^\ZZ\times M$:
$${\PP_M:~\Omega^\ZZ\times M\to M},~~\PP_+:~\Omega^\ZZ\times M\to\Omega^{\NN\cup\{0\}}\times M.$$

\subsection{The lifted measure of a stationary measure}

\begin{Lemma}\label{Lem:invariant-measure-G}
Let $G$ be the extended dynamical system generated by $(\Omega,\ell)$. For any Borel probability $\nu$ and any its stationary measure $\mu$, there is a unique $G$-invariant Borel probablity measure $\mu^G$ supported on $\Omega^\ZZ\times M$ such that $(\PP_+)_*\mu^G=\nu^{\NN\cup\{0\}}\times \mu$.

Consequenly, we have the following properties:
\begin{itemize}
\item $\mu$ is an ergodic stationary measure of $\nu$ if and only if $\mu^G$ is ergodic for $G$.

\item Assume that $\mu_n$ is the stationary measure of $\nu_n$ for any $n\in\NN\cup\{0\}$ and $\lim_{n\to\infty}\mu_n=\mu_0$, $\lim_{n\to\infty}\nu_n=\nu_0$, then $\lim_{n\to\infty}\mu_n^G=\mu_0^G$.

\end{itemize}
\end{Lemma}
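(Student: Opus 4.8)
The plan is to recognize $(\Omega^\ZZ\times M,G)$ as the natural extension of a one-sided skew product and to read off all three assertions from that identification. On $\Omega^{\NN\cup\{0\}}\times M$ I introduce
$$F_+\colon\ \bigl((\omega_0,\omega_1,\dots),\,x\bigr)\ \longmapsto\ \bigl((\omega_1,\omega_2,\dots),\,f_{\omega_0}(x)\bigr),$$
so that $\PP_+\circ G=F_+\circ\PP_+$. A short computation against functions depending on $x$ and on finitely many $\omega$-coordinates shows that $\lambda:=\nu^{\NN\cup\{0\}}\times\mu$ is $F_+$-invariant if and only if $\mu$ is $\nu$-stationary (taking all $\Omega$-factors trivial recovers exactly the identity of Definition~\ref{Def:stationary-measure}). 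Since each $f_{\omega_0}$ is a diffeomorphism and $\sigma$ is invertible, $G$ is invertible, and $\PP_+$ realizes $(\Omega^\ZZ\times M,G)$ as the natural extension of $(\Omega^{\NN\cup\{0\}}\times M,F_+)$; concretely, the increasing union of $\sigma$-algebras $\bigcup_{n\ge0}G^n(\PP_+^{-1}(\cB))$, where $\cB$ is the Borel $\sigma$-algebra of $\Omega^{\NN\cup\{0\}}\times M$, generates the whole Borel $\sigma$-algebra of $\Omega^\ZZ\times M$, because applying $G^n$ exposes the coordinates $\omega_{-n},\omega_{-n+1},\dots$\ . The general theory of natural extensions (or, concretely, a Kolmogorov consistency/extension argument whose only nontrivial input is the $F_+$-invariance of $\lambda$) then produces a unique $G$-invariant $\mu^G$ with $(\PP_+)_*\mu^G=\lambda$, and $\mu^G$ is $G$-ergodic if and only if $\lambda$ is $F_+$-ergodic.

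It remains to translate ``$\lambda$ is $F_+$-ergodic'' into ``$\mu$ is an ergodic stationary measure'' in the sense of Section~4. One direction is a direct check: if $A\subset M$ is randomly invariant then $\PP_M^{-1}(A)$ is $G$-invariant modulo $\mu^G$, with $\mu^G(\PP_M^{-1}(A))=\mu(A)$, so $G$-ergodicity of $\mu^G$ forces $\mu(A)\in\{0,1\}$. For the converse, given an $F_+$-invariant set $B'\subset\Omega^{\NN\cup\{0\}}\times M$, one must show that $B'$ agrees modulo $\lambda$ with a product $\Omega^{\NN\cup\{0\}}\times A$, after which the $F_+$-invariance of $B'$ becomes precisely the pair of implications defining a randomly invariant set $A$ for $(\nu,\mu)$, so that $\lambda(B')=\mu(A)\in\{0,1\}$. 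This last reduction, which is a tail-triviality statement for the i.i.d.\ noise sequence (it is here that the product structure of $\nu^{\NN\cup\{0\}}$ is used essentially), is the step I expect to be the only genuinely non-formal part of the argument; I would quote it from \cite[Appendix A.1]{Kif86} or \cite{LiQ95} rather than reprove the martingale estimate.

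For the continuity statement, assume $\mu_n$ is stationary for $\nu_n$ with $\mu_n\to\mu_0$ and $\nu_n\to\nu_0$. Passing to the limit in Definition~\ref{Def:stationary-measure} (using continuity of $(\omega,x)\mapsto f_\omega(x)$, hence of the map sending $\nu\times\mu$ to its $F_+$-image) shows that $\mu_0$ is stationary for $\nu_0$, so all the $\mu_n^G$ and $\mu_0^G$ are defined. The Borel probabilities on the compact metric space $\Omega^\ZZ\times M$ form a compact metrizable set, so it suffices to see that every weak-$*$ accumulation point $m$ of $\{\mu_n^G\}$ equals $\mu_0^G$. Continuity of $G$ makes $m$ a $G$-invariant measure; continuity of $\PP_+$ together with the weak-$*$ convergence $\nu_n^{\NN\cup\{0\}}\times\mu_n\to\nu_0^{\NN\cup\{0\}}\times\mu_0$ (test against functions of $x$ and of finitely many $\omega$-coordinates, using $\nu_n\to\nu_0$) gives $(\PP_+)_*m=\nu_0^{\NN\cup\{0\}}\times\mu_0$; and then the uniqueness clause established above forces $m=\mu_0^G$, completing the argument.
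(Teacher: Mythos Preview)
Your proposal is correct and follows essentially the same approach as the paper. The paper simply cites \cite[Proposition 1.2 and Proposition 1.3]{LiQ95} for the existence, uniqueness, and ergodicity equivalence (precisely the reference you say you would quote), and for the continuity statement gives exactly your argument: take a weak-$*$ accumulation point of $\{\mu_n^G\}$, observe it is $G$-invariant, push forward by $\PP_+$ to identify its one-sided marginal as $\nu_0^{\NN\cup\{0\}}\times\mu_0$, and invoke uniqueness.
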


\begin{proof} By \cite[Proposition 1.2 and Proposition 1.3]{LiQ95}, one knows the existence and uniqueness of $\mu^G$, and the fact that $\mu$ is an ergodic stationary measure of $\nu$ if and only if $\mu^G$ is an ergodic measure of $G$. 

Assume that $\lim_{n\to\infty}\nu_n=\nu_0$, $\lim_{n\to\infty}\mu_n=\mu_0$. Assume that $\eta=\lim_{n\to\infty}\mu_n^G$. It suffices to prove that $\eta=\mu_0^G$. Since $\mu_n^G$ is invariant for any $n\in\NN$, one has that $\eta$ is $G$-invariant. By the continuity of the projection $\PP_+$, one has that
$$(\PP_+)_*(\eta)=\lim_{n\to\infty}(\PP_+)_*(\mu_n^G)=\lim_{n\to\infty}\nu_n^{\NN\cup\{0\}}\times\mu_n=\nu_0^{\NN\cup\{0\}}\times\mu_0.$$
Thus, by the uniqueness of $\mu_0^G$, one has that $\eta=\mu_0^G$.\end{proof}

As a consequence of Lemma~\ref{Lem:invariant-measure-G}, one has the following result on lifted measures. The proof is omitted.

\begin{Corollary}\label{Cor:lifted-limit}
Let $G$ be the extended dynamical system generated by $(\Omega,\ell)$. Assume that there is $\omega_f\in\Omega$ such that $\ell(\omega_f)=f$. One has the following property.
\begin{itemize}

\item If $\{(G,\nu_n)\}_{n\in\NN}$ is a random perturbation of $f$, and $\{\mu_n\}_{n\in\NN}$ are the stationary measures of $\{\nu_n\}_{n\in\NN}$, $\lim_{n\to\infty}\mu_n=\mu$, then $\lim_{n\to\infty}\mu_n^G=\mu^G=\delta_{\omega_f}^\ZZ\times\mu$.


\end{itemize}
\end{Corollary}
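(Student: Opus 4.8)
The plan is to read the corollary off from the two bullet points of Lemma~\ref{Lem:invariant-measure-G}, together with the defining properties of a random perturbation in Definition~\ref{Def:random-perturbation}; the only genuinely new content beyond that lemma is the explicit identification of the limit lifted measure with the product $\delta_{\omega_f}^\ZZ\times\mu$.

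First I would check that $\nu_n\to\delta_{\omega_f}$ in the weak-$*$ topology. By Definition~\ref{Def:random-perturbation}, $\lim_{n\to\infty}\supp(\nu_n)=\{\omega_f\}$ in the Hausdorff metric, so for every $\varepsilon>0$ the support of $\nu_n$ is contained in the $\varepsilon$-ball around $\omega_f$ for all large $n$. Since $\Omega$ is a compact metric space, every continuous $g\colon\Omega\to\RR$ is uniformly continuous, whence $\bigl|\int g\,\mathrm{d}\nu_n-g(\omega_f)\bigr|\le\sup\{|g(\omega)-g(\omega_f)|:d(\omega,\omega_f)\le\varepsilon\}$, which tends to $0$; this gives $\nu_n\to\delta_{\omega_f}$. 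Next I would observe that the limit $\mu=\lim_{n\to\infty}\mu_n$ is a stationary measure of the one-point randomness $\delta_{\omega_f}$: by Proposition~\ref{Pro:limit-in-lambda} it is $f$-invariant, and for $\nu=\delta_{\omega_f}$ the stationarity condition in Definition~\ref{Def:stationary-measure} collapses to $\mu(A)=\mu(f_{\omega_f}^{-1}(A))=\mu(f^{-1}(A))$, i.e.\ exactly $f$-invariance. Hence the second bullet of Lemma~\ref{Lem:invariant-measure-G}, applied with $\nu_0=\delta_{\omega_f}$ and $\mu_0=\mu$, yields $\lim_{n\to\infty}\mu_n^G=\mu^G$, where $\mu^G$ is the unique $G$-invariant probability on $\Omega^\ZZ\times M$ with $(\PP_+)_*\mu^G=\delta_{\omega_f}^{\NN\cup\{0\}}\times\mu$.

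Finally I would identify $\mu^G$ with $\delta_{\omega_f}^\ZZ\times\mu$ via the uniqueness clause of Lemma~\ref{Lem:invariant-measure-G}. The product $\delta_{\omega_f}^\ZZ\times\mu$ clearly satisfies $(\PP_+)_*(\delta_{\omega_f}^\ZZ\times\mu)=\delta_{\omega_f}^{\NN\cup\{0\}}\times\mu$, so it is enough to see that it is $G$-invariant: its $\Omega^\ZZ$-marginal is $\sigma$-invariant because $\sigma$ fixes $\delta_{\omega_f}^\ZZ$, and on the support of $\delta_{\omega_f}^\ZZ$ one has $\omega_0=\omega_f$, so the fibre map appearing in $G$ is $f_{\omega_0}=f$, which preserves $\mu$; therefore $G_*(\delta_{\omega_f}^\ZZ\times\mu)=\delta_{\omega_f}^\ZZ\times\mu$. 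Uniqueness then forces $\mu^G=\delta_{\omega_f}^\ZZ\times\mu$, completing the argument.

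I do not expect a serious obstacle in this corollary; it is essentially bookkeeping on top of Lemma~\ref{Lem:invariant-measure-G}. The two places that require a little care are the passage from Hausdorff convergence of the supports $\supp(\nu_n)$ to weak-$*$ convergence $\nu_n\to\delta_{\omega_f}$ (handled by compactness and uniform continuity of test functions), and the remark that a stationary measure of a Dirac randomness is nothing but an invariant measure of the corresponding diffeomorphism, which is what lets us apply the convergence statement of Lemma~\ref{Lem:invariant-measure-G} with $\mu_0=\mu$.
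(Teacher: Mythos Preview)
Your proof is correct and is precisely the intended argument: the paper itself states the corollary as a consequence of Lemma~\ref{Lem:invariant-measure-G} and omits the proof, so your detailed verification via $\nu_n\to\delta_{\omega_f}$, the identification of stationarity for $\delta_{\omega_f}$ with $f$-invariance, and the uniqueness clause to pin down $\mu^G=\delta_{\omega_f}^\ZZ\times\mu$ is exactly the bookkeeping the authors had in mind.
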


\subsection{Dominated splittings for random dynamical systems}

We want to present the dynamics of $G$. For any $\underline\omega=(\cdots,\omega_{-1},\dot\omega_0,\omega_1,\cdots)\in\Omega^\ZZ$ and any $x\in M$, one defines
\begin{itemize}

\item $f_{\underline\omega}^n(x)=f_{\omega_{n-1}}\circ \cdots\circ f_{\omega_0}(x)$, if $n\ge 1$,

\item $f^0=id$,

\item $f_{\underline\omega}^n(x)=f^{-1}_{\omega_{n}}\circ \cdots\circ f^{-1}_{\omega_{-1}}(x)$, if $n\le -1$.

\end{itemize}
For the presentation, we have
$$G^n([\underline\omega,x])=[\sigma^n(\underline\omega),f^n_{\underline\omega}(x)],~~~\forall n\in\ZZ.$$

\smallskip

One has to associate a tangent bundle for any compact $G$-invariant set $\Lambda^G$ in $\Omega^\ZZ\times M$ for the extended dynamical system $G$.

\begin{Definition}\label{Def:dominated-extended}
For each $[\underline\omega,x]$, we can attach a vector space $TM|_{[\underline\omega,x]}=TM|_{\PP_M([\underline\omega,x])}=TM|_x$. This gives a vector bundle on $\Omega^\ZZ\times M$. This vector bundle is also called the \emph{tangent bundle}, and is also denoted by $TM$.
 
 \smallskip
 
 A map $DG:~TM|_{\Omega^\ZZ\times M}\to TM|_{\Omega^\ZZ\times M}$ can be defined by $	DG(v)=Df_{\omega_0}(v)\in TM|_{f_{\omega_0}(x)}$ for every $v\in TM_{[\underline{\omega},x]}$.

For a $G$-invariant set $\Lambda^G$ in $\Omega^\ZZ\times M$, a sub-bundle $E\subset TM|_{\Lambda^G}$ is said to be \emph{invariant} or \emph{$DG$-invariant} if $DG(E([\underline\omega,x]))=E(G([\underline\omega,x]))$ for any $[\underline\omega,x]\in\Lambda^G$.

\smallskip

A $DG$-invariant splitting $TM|_{\Lambda^G}=E\oplus_\succ F$ on a compact $G$-invariant set $\Lambda^G$ is a \emph{dominated splitting} if there are constants $C>0$ and $\lambda\in(0,1)$ such that for any $[\underline\omega,x]\in\Lambda^G$ and any $n\in\NN$, we have that
$$\|DG^n|_{F([\underline\omega,x])}\|\|DG^{-n}|_{E(G^n([\underline\omega,x]))}\|\le C\lambda^n.$$
\end{Definition}

The following proposition is standard. One can see its proof in \cite[Corollary 2.8]{CrP18} for instance.
\begin{Proposition}\label{Pro:extend-dominated}
Assume that a compact invariant set $\Lambda^G\subset \Omega^\ZZ\times M$ of $G$ admits a dominated splitting $TM|_{\Lambda^G}=E\oplus_\succ F$. Then there is a neighborhood $U^G$ of $\Lambda^G$ such that the maximal $G$-invariant in $U^G$ also admits a dominated splitting with the same type of $E\oplus_\succ F$.

\end{Proposition}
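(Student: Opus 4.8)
Proposition~\ref{Pro:extend-dominated} — *plan of proof.*

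The plan is to reduce the statement to the already-known deterministic persistence of dominated splittings by reformulating everything in terms of a single continuous linear cocycle over a compact base. First I would observe that the hypothesis packages naturally into the following data: the compact metric space $\Lambda^G\subset\Omega^\ZZ\times M$, the homeomorphism $G|_{\Lambda^G}$, the continuous vector bundle $TM|_{\Lambda^G}$ (pulled back from $TM$ via $\PP_M$), and the continuous bundle automorphism $DG$ covering $G$; the cocycle $(DG,G)$ admits a dominated splitting $E\oplus_\succ F$ in exactly the classical sense once one notes that the defining inequality $\|DG^n|_{F}\|\,\|DG^{-n}|_{E\circ G^n}\|\le C\lambda^n$ is literally the classical domination inequality for this cocycle. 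So the task is: extend a dominated splitting of a continuous invertible linear cocycle over $(\Lambda^G,G|_{\Lambda^G})$ to a neighborhood.

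The key steps, in order. First, extend the cocycle itself: since $\ell:\Omega\to\diff^2(M)$ is continuous, $DG$ is a globally defined continuous bundle automorphism of $TM$ over the ambient compact space $\Omega^\ZZ\times M$, covering the homeomorphism $G$; there is no difficulty here — no extension is needed, the cocycle is already defined on a neighborhood (indeed everywhere). Second, recall the standard criterion (the ``cone-field''/``uniform $n$-step'' characterization of domination): a splitting is dominated with constants $C,\lambda$ iff there exist $N\in\NN$ and continuous cone fields $\cC^F$ around $F$ and, for $G^{-1}$, $\cC^E$ around $E$, that are strictly $DG^N$-forward-invariant and strictly $DG^{-N}$-forward-invariant respectively, with uniform contraction of the cone opening. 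Third, fix such $N$ and such cone fields on $\Lambda^G$; because the base $\Omega^\ZZ\times M$ is compact and $G$, $DG$ are continuous, the cone fields extend continuously to a neighborhood of $\Lambda^G$ in $\Omega^\ZZ\times M$ (e.g. by extending the continuous bundle map $[\underline\omega,x]\mapsto(\text{orthogonal projection onto }F)$ and using that strict cone invariance is an open condition), and strict $DG^{\pm N}$-invariance, being an open condition, survives on a possibly smaller neighborhood $U^G$. Fourth, on the maximal $G$-invariant subset $\Lambda_{\max}^G\subset U^G$ the two iterated intersections $\bigcap_{n\ge0}DG^{nN}(\cC^F)$ and $\bigcap_{n\ge0}DG^{-nN}(\cC^E)$ define $DG$-invariant continuous sub-bundles $\widetilde F$, $\widetilde E$ with $\widetilde E\oplus\widetilde F=TM|_{\Lambda_{\max}^G}$ and the uniform cone-contraction gives back a domination inequality with new constants $\widetilde C,\widetilde\lambda$; the type (i.e. $\dim\widetilde E=\dim E$, $\dim\widetilde F=\dim F$, and $\widetilde E|_{\Lambda^G}=E$, $\widetilde F|_{\Lambda^G}=F$) is preserved by construction. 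This is exactly the content of \cite[Corollary 2.8]{CrP18} applied to the cocycle $(DG,G|_{U^G})$, and I would simply invoke it after having set up the cocycle.

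The only point requiring a little care — and the one I would flag as the main obstacle — is purely bookkeeping: verifying that the abstract results on linear cocycles over compact metric systems apply verbatim to $(DG,G)$ over $\Omega^\ZZ\times M$. Concretely one must check (a) that $\Omega^\ZZ$ with the product topology is a compact metric space (it is, since $\Omega$ is compact metric), hence so is $\Omega^\ZZ\times M$ and its closed subset $\Lambda^G$; (b) that $G$ is a homeomorphism of $\Omega^\ZZ\times M$ (the shift $\sigma$ is a homeomorphism of $\Omega^\ZZ$ and each $f_{\omega}$ is a diffeomorphism depending continuously on $\omega$, so $[\underline\omega,x]\mapsto[\sigma\underline\omega,f_{\omega_0}(x)]$ is a homeomorphism with continuous inverse $[\underline\omega,x]\mapsto[\sigma^{-1}\underline\omega,f_{\omega_{-1}}^{-1}(x)]$); and (c) that $DG$ and $DG^{-1}$ are continuous in $[\underline\omega,x]$ and $v$ jointly, which follows from $\ell$ being continuous into $\diff^2(M)\subset\diff^1(M)$ so that $(\omega,x)\mapsto Df_\omega(x)$ is continuous. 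Once (a)--(c) are in hand, every tool used in the deterministic proof (compactness of the base, uniform estimates, openness of cone invariance, maximal invariant set) transfers with no change, and the conclusion follows. I would therefore write the proof as: set up $(DG,G)$ as a continuous linear cocycle over the compact system $(\Omega^\ZZ\times M,G)$, note the domination inequality is the classical one, and cite \cite[Corollary 2.8]{CrP18}.
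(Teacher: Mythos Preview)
Your proposal is correct and matches the paper's own treatment exactly: the paper does not give an independent argument but simply remarks that the result is standard and refers to \cite[Corollary 2.8]{CrP18}, which is precisely the reference you arrive at after your (more careful) verification that $(DG,G)$ is a continuous linear cocycle over a compact metric base. Your checks (a)--(c) are the right hypotheses to verify and the cone-field sketch is the standard route; nothing more is needed.
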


We can lift bundles of one diffeomorphism to the extended dynamical system. The result if folklore.

\begin{Lemma}\label{Lem:extend-bundle}
Let $G$ be the extended dynamical system generated by $(\Omega,\ell)$. Assume that there is $\omega_f\in\Omega$ such that $\ell(\omega_f)=f$. Then,
\begin{itemize}
\item If $\mu$ is an $f$-invariant measure, then $\mu^G$ has the same Lyapunov exponents of $G$ as $\mu$ and $f$.

\item If $\Lambda$ is a compact invariant set, then $\Lambda^G=\{\omega_f\}^\ZZ\times\Lambda$ is a compact invariant set of $G$. Moreover, if $\Lambda$ admits a dominated splitting $TM|_\Lambda=E\oplus_\succ F$ with respect to $Df$, then $\Lambda^G$ admits a dominated splitting with respect to $DG$ of the same type.

\end{itemize}

\end{Lemma}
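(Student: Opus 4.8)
The plan is to treat the two items essentially independently, reducing everything to the observation that the extended dynamics $G$ restricted to $\{\omega_f\}^\ZZ\times\Lambda$ is literally a copy of the deterministic dynamics of $f$ on $\Lambda$. First I would verify this reduction: since $\ell(\omega_f)=f$, for $\underline\omega$ the constant sequence $\omega_f$ one has $f_{\underline\omega}^n = f^n$ for every $n\in\ZZ$, so $G^n([\underline\omega,x]) = [\underline\omega, f^n(x)]$; in particular $\{\omega_f\}^\ZZ\times\Lambda$ is $G$-invariant whenever $\Lambda$ is $f$-invariant, and the projection $\PP_M$ conjugates $G|_{\{\omega_f\}^\ZZ\times\Lambda}$ to $f|_\Lambda$. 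Moreover, by the definition of $DG$, for $v\in TM|_{[\underline\omega,x]}=TM|_x$ one has $DG(v) = Df_{\omega_f}(v) = Df(v)$, so the bundle map $DG$ over $\{\omega_f\}^\ZZ\times\Lambda$ is, under the canonical identification $TM|_{[\underline\omega,x]}=TM|_x$, exactly $Df$.

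For the first item, I would recall (Corollary~\ref{Cor:lifted-limit}, or directly Lemma~\ref{Lem:invariant-measure-G} applied to $\nu=\delta_{\omega_f}$) that the lifted measure is $\mu^G=\delta_{\omega_f}^\ZZ\times\mu$, which is supported on $\{\omega_f\}^\ZZ\times\Lambda$ and is carried onto $\mu$ by $\PP_M$. Since $\PP_M$ intertwines $(G,DG)$ with $(f,Df)$ along the support of $\mu^G$ by the reduction above, the cocycle $DG^n$ over $(G,\mu^G)$ is conjugate to $Df^n$ over $(f,\mu)$ via an isometric bundle identification; hence the Oseledets splittings and the Lyapunov exponents (with multiplicities) coincide. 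This also shows $h_{\mu^G}(G)=h_\mu(f)$ if one wanted it, but for the statement only the Lyapunov exponents are claimed.

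For the second item, the invariance of $\Lambda^G=\{\omega_f\}^\ZZ\times\Lambda$ and its compactness are immediate ($\{\omega_f\}^\ZZ$ is a point, $\Lambda$ is compact). Given a dominated splitting $TM|_\Lambda = E\oplus_\succ F$ for $Df$ with constants $C>0$, $\lambda\in(0,1)$, I would define $E^G([\underline\omega,x]):=E(x)$ and $F^G([\underline\omega,x]):=F(x)$ for $[\underline\omega,x]\in\Lambda^G$ under the identification $TM|_{[\underline\omega,x]}=TM|_x$. The $DG$-invariance of $E^G\oplus F^G$ follows from the $Df$-invariance of $E\oplus F$ together with $DG=Df$ on $\Lambda^G$, and for every $[\underline\omega,x]\in\Lambda^G$ and $n\in\NN$,
$$\|DG^n|_{F^G([\underline\omega,x])}\|\,\|DG^{-n}|_{E^G(G^n([\underline\omega,x]))}\| = \|Df^n|_{F(x)}\|\,\|Df^{-n}|_{E(f^n(x))}\| \le C\lambda^n,$$
so $TM|_{\Lambda^G}=E^G\oplus_\succ F^G$ is a dominated splitting of the same type.

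The only point requiring a little care — and the closest thing to an obstacle — is bookkeeping the canonical identification $TM|_{[\underline\omega,x]}=TM|_x$ so that "$DG=Df$ on $\Lambda^G$" is a genuine equality of bundle maps rather than a conjugacy up to an unspecified isomorphism; once that identification is fixed (as it is in Definition~\ref{Def:dominated-extended}), both items are formal. I do not expect any real difficulty.
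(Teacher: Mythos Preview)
Your proposal is correct and is precisely the straightforward verification one would expect; the paper in fact omits any proof, stating only that ``the result is folklore.'' Your argument supplies exactly the bookkeeping the paper skips.
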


\subsection{The Pesin blocks for the extended dynamical systems}

Assume that a compact $G$-invariant set $\Lambda^G\subset\Omega^\ZZ\times M$ and $E\subset TM|_{\Lambda^G}$ is an invariant sub-bundle. We define the following subset of $\Lambda^G$: given a constant $\alpha>0$ and an integer $\ell\in\NN$,
$$\Lambda^G_\ell(E,\alpha)=\{[\underline\omega,x]\in\Lambda^G:\prod_{i=0}^{n-1}\|DG^{-\ell}|_{E(G^{-i \ell}([\underline\omega,x]))}\|\le {\rm e}^{-\alpha \ell n},~\forall n\in\NN\}.$$
One can also consider finite pieces of orbits:
$$\Lambda^G_{\ell,n}(E,\alpha)=\{[\underline\omega,x]\in\Lambda^G:\prod_{i=0}^{m-1}\|DG^{-\ell}|_{E(G^{-i \ell}([\underline\omega,x]))}\|\le {\rm e}^{-\alpha \ell n},~\forall 1\le m\le n\}.$$

It is clear that 
$$\Lambda^G_\ell(E,\alpha)=\bigcap_{n\in\NN}\Lambda^G_{\ell,n}(E,\alpha).$$

When $E$ and $F$ are invariant sub-bundles over $\Lambda^G$ and $F$ is dominated by $E$, we do not distinguish $\Lambda^G_\ell(F,\alpha)$ and $\Lambda^G_\ell(E\oplus F,\alpha)$ although there could be some slight differences on constants. Note that we do not assume that $E\oplus F=TM|_{\Lambda^G}$.

For the extended dynamical systems, one has the following result:

\begin{Proposition}\label{Pro:uniform-pesin-block-random-perturbation}
Assume that $E$ is a one-dimensional continuous $DG$-invariant sub-bundle over a compact $G$-invariant set $\Lambda^G\subset\Omega^\ZZ\times M$.  Assume that $\eta$ supported on $\Lambda^G$ is a $G$-invariant measure, and there are  constants $\theta>\alpha>0$ such that $\int \log\|DG|_{E}\|{\rm d}\zeta>\theta$ for any ergodic component $\zeta$ of $\eta$.

If $\{\eta_n\}$ is a sequence of ergodic measures of $G$ such that $\lim_{n\to\infty}\eta_n=\eta$, then for any $\varepsilon>0$, there is $\ell=\ell(\varepsilon)>0$ such that
$$\liminf_{n\to\infty}\eta_n(\Lambda^G_\ell(E,\alpha))>1-\varepsilon.$$

\end{Proposition}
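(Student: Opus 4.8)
The plan is to reduce the statement to a uniform estimate on the limit measure $\eta$ and then transfer it to the approximating sequence $\{\eta_n\}$ via weak-$*$ convergence, using the fact that $E$ is one-dimensional and continuous (so that $\log\|DG|_E\|$ is a genuine continuous function on $\Lambda^G$, and the products defining $\Lambda^G_\ell(E,\alpha)$ are controlled by Birkhoff sums of this function). First I would fix the time scale: since $\int\log\|DG|_E\|\,\mathrm d\zeta>\theta$ for \emph{every} ergodic component $\zeta$ of $\eta$, the function $\varphi:=\log\|DG|_E\|$ has, by the ergodic decomposition and the Birkhoff theorem, the property that for $\eta$-a.e.\ point $[\underline\omega,x]$ the forward averages $\frac1m\sum_{j=0}^{m-1}\varphi(G^{-j}[\underline\omega,x])$ converge to a value $>\theta$ (note $\|DG^{-1}|_{E}\|^{-1}=\|DG|_E\|$ since $\dim E=1$, so $\prod_{i=0}^{n-1}\|DG^{-\ell}|_{E(G^{-i\ell}\cdot)}\|=\exp(-\sum_{j=0}^{n\ell-1}\varphi(G^{-j}\cdot))$ up to the obvious reindexing by blocks of length $\ell$). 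Hence for any $\alpha<\theta$, by Egorov/Pliss-type arguments the set of points whose \emph{every} backward block-average stays above $\alpha$ has $\eta$-measure close to $1$; quantitatively, given $\varepsilon>0$ I would choose $\ell$ large enough that $\eta\big(\Lambda^G_\ell(E,\alpha')\big)>1-\varepsilon/2$ for some $\alpha'$ strictly between $\alpha$ and $\theta$, using that for large $\ell$ the block-averaged cocycle is uniformly close to its integral on a set of large measure.

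The second step is the transfer to $\eta_n$. The obstacle here is that $\Lambda^G_\ell(E,\alpha')$ is defined by \emph{infinitely many} inequalities, so it is closed but not open, and weak-$*$ convergence only gives $\limsup_n \eta_n(\text{closed set})\le \eta(\text{closed set})$ — the wrong direction. The standard fix, which I would use, is to pass through the finite-horizon sets $\Lambda^G_{\ell,N}(E,\alpha')$: by continuity of $\varphi$ these sets are defined by finitely many non-strict inequalities on continuous functions, hence closed, but their \emph{interiors} (obtained by relaxing $\alpha'$ to $\alpha<\alpha'$, giving strict inequalities) are open and contain $\Lambda^G_\ell(E,\alpha')$ once one also uses that $\Lambda^G_\ell(E,\alpha')\subset \Lambda^G_{\ell,N}(E,\alpha')\subset \mathrm{int}\,\Lambda^G_{\ell,N}(E,\alpha)$. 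Then the portmanteau theorem gives $\liminf_n \eta_n\big(\mathrm{int}\,\Lambda^G_{\ell,N}(E,\alpha)\big)\ge \eta\big(\mathrm{int}\,\Lambda^G_{\ell,N}(E,\alpha)\big)\ge \eta\big(\Lambda^G_\ell(E,\alpha')\big)>1-\varepsilon/2$ for each fixed $N$. The remaining point is to upgrade from finite horizon $N$ to the full intersection $\Lambda^G_\ell(E,\alpha)=\bigcap_N \Lambda^G_{\ell,N}(E,\alpha)$: here one uses that each $\eta_n$ is \emph{ergodic} (or at least $G$-invariant) together with the Pliss lemma, which says that for an ergodic measure with $\int\varphi\,\mathrm d\eta_n$ bounded below, a definite-density subset of every orbit consists of "$\alpha$-Pliss times"; by choosing $\ell$ (independently of $n$) so that the integrals $\int\varphi\,\mathrm d\eta_n$ stay above $\alpha$ for all large $n$ — which holds because $\int\varphi\,\mathrm d\eta_n\to\int\varphi\,\mathrm d\eta>\theta>\alpha$ — one gets that membership in $\Lambda^G_{\ell,N}(E,\alpha)$ for all $N$ fails only on a set of controlled small measure, uniformly in $n$.

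Concretely, I would run the following argument for the uniform-in-$N$ control: for an ergodic $G$-measure $\eta_n$ with $\int\varphi\,\mathrm d\eta_n\ge\alpha+\beta$ (some fixed $\beta>0$, valid for $n$ large), the set of points failing \emph{some} backward block-inequality at horizon $N$ but satisfying all at horizon $N-1$ can be bounded, via the Pliss lemma applied backward, by a quantity summable in $N$ with total mass $\le \varepsilon/2$ provided $\ell$ was chosen large — this is exactly the mechanism that makes $\ell$ depend only on $\varepsilon$ (through $\beta$ and the oscillation of $\varphi$), not on $n$. Combining, $\liminf_n\eta_n(\Lambda^G_\ell(E,\alpha))\ge \liminf_n\eta_n(\Lambda^G_{\ell,N_0}(E,\alpha)) - \varepsilon/2 > 1-\varepsilon$ for suitable $N_0$. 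The main obstacle is precisely this interchange of the limit in $n$ with the infinite intersection over $N$; it is overcome by the uniform lower bound on $\int\varphi\,\mathrm d\eta_n$ plus a Pliss-type tail estimate, and the one-dimensionality of $E$ is what makes $\varphi$ continuous and the cocycle multiplicatively a Birkhoff sum, which is essential for both the continuity (portmanteau) step and the Pliss step.
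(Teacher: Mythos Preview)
Your overall strategy shares the paper's ingredients --- continuity of $\varphi=\log\|DG|_E\|$ thanks to $\dim E=1$, a weak-$*$ transfer via open/closed relaxation of the defining inequalities, and a Pliss-type argument --- but Step~3 contains a genuine gap, and closing it essentially forces you onto the paper's route, which is organized rather differently.

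The gap is the claim that for an ergodic $\eta_n$ with $\int\varphi\,\mathrm d\eta_n\ge\alpha+\beta$, choosing $\ell$ large makes $\eta_n\big(\Lambda^G_{\ell,N_0}(E,\alpha)\setminus\Lambda^G_\ell(E,\alpha)\big)\le\varepsilon/2$ uniformly in $n$, via a ``summable in $N$'' Pliss tail. The standard Pliss lemma, applied to the block sequence $a_i=\tfrac{1}{\ell}\log\|DG^{-\ell}|_{E(G^{-i\ell}\cdot)}\|$, only guarantees that Pliss times have density at least $\beta/(C-\alpha)$ for some fixed $C$ --- positive, but not close to $1$ --- and this density does \emph{not} improve as $\ell$ grows: enlarging $\ell$ does not shrink the range of $a_i$, it only concentrates the $\eta$-distribution of $a_i$, which says nothing about the distribution under the different measure $\eta_n$. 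In fact one can build ergodic measures $\eta_n$ with $\int\varphi\,\mathrm d\eta_n=\theta$ yet $\eta_n(\Lambda^G_\ell(E,\alpha))\le 1/2$ for every fixed $\ell$ and all large $n$ (long alternating blocks of values $\theta\pm 1$ on a periodic orbit of period $2n$). So the integral bound alone cannot drive your Step~3; the closeness to $\eta$ must enter, and your Step~2 only delivers the finite-horizon estimates with an $n$-threshold depending on $N$, which is exactly the interchange-of-limits problem you identified but did not resolve.

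The paper avoids the finite-to-infinite-horizon passage altogether. It transfers from $\eta$ to $\eta_n$ only the \emph{single-step} level set $\Lambda^G_{\ell,1}\big(E,(\theta+\alpha)/2\big)$: one continuous inequality, so your relaxation trick works directly and gives $\eta_n(\Lambda^G_{\ell,1})>1-\rho\varepsilon'$ for all large $n$, with $\ell$ coming from Lemma~\ref{Lem:find-integer}. The constant $\rho$ is chosen via a \emph{density} Pliss lemma (Lemma~\ref{Lem:pliss-like}, due to Andersson--V\'asquez): if a bounded sequence $\{a_i\}$ satisfies $a_i\le\gamma_1$ on a set of indices of lower density $>1-\rho$, then the set of $(\gamma_2)$-Pliss times has upper density $>1-\varepsilon'$. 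By Birkhoff plus a Chebyshev step, $\eta_n$-most points have $G^{-\ell}$-orbits visiting $\Lambda^G_{\ell,1}$ with density $>1-\rho$; on those orbits Lemma~\ref{Lem:pliss-like} yields that iterates lying in the \emph{full} block $\Lambda^G_\ell(E,\alpha)$ have density $>1-\varepsilon'$; a final invariance argument converts this orbit density into $\eta_n(\Lambda^G_\ell(E,\alpha))>(1-\varepsilon')^2>1-\varepsilon$. The missing idea in your proposal is precisely this density version of Pliss, which is what turns ``one-step good set has $\eta_n$-measure close to $1$'' directly into ``full Pesin block has $\eta_n$-measure close to $1$'' without ever touching the intermediate sets $\Lambda^G_{\ell,N}$.
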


One has to do some preparations. One can find the constant $\ell\in\NN$ by the following lemma:

\begin{Lemma}\label{Lem:find-integer}
Assume that $E$ is a one-dimensional continuous $DG$-invariant sub-bundle over a compact $G$-invariant set $\Lambda^G\subset\Omega^\ZZ\times M$.  Assume that $\eta$ supported on $\Lambda^G$ is a $G$-invariant measure, and there are  constants $\theta>\alpha>0$ such that $\int \log\|DG|_{E}\|{\rm d}\zeta>\theta$ for any ergodic component $\zeta$ of $\eta$. 
%
Then for any $\delta>0$, there is $\ell=\ell(\delta)\in\NN$ such that
$$\eta(\Lambda^G_{\ell,1}(E,\alpha))>1-\delta.$$
\end{Lemma}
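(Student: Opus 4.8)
The plan is to recognize the defining inequality of $\Lambda^G_{\ell,1}(E,\alpha)$ as a statement about a Birkhoff average along the backward orbit, and then apply the ergodic theorem together with Egorov's theorem. Since $E$ is one-dimensional and continuous and $DG$ is continuous, the function
$$\phi([\underline\omega,x]):=\log\|DG|_{E([\underline\omega,x])}\|$$
is continuous on the compact set $\Lambda^G$, hence bounded and $\eta$-integrable. Using that $E$ is a line bundle, $DG^{-\ell}|_{E([\underline\omega,x])}$ is a scalar whose reciprocal is $\|DG^\ell|_{E(G^{-\ell}([\underline\omega,x]))}\|=\prod_{i=1}^{\ell}\|DG|_{E(G^{-i}([\underline\omega,x]))}\|$, so that
$$\log\|DG^{-\ell}|_{E([\underline\omega,x])}\| = -\sum_{i=1}^{\ell}\phi\bigl(G^{-i}([\underline\omega,x])\bigr).$$
Consequently $[\underline\omega,x]\in\Lambda^G_{\ell,1}(E,\alpha)$ if and only if $\frac1\ell\sum_{i=1}^{\ell}\phi\bigl(G^{-i}([\underline\omega,x])\bigr)\ge\alpha$.

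Next I would apply the Birkhoff ergodic theorem to the measure-preserving system $(G^{-1},\eta)$ (recall $\eta$ is $G$-invariant, hence $G^{-1}$-invariant). This gives, for $\eta$-a.e.\ $[\underline\omega,x]$,
$$\frac1N\sum_{i=0}^{N-1}\phi\bigl(G^{-i}([\underline\omega,x])\bigr)\longrightarrow\phi^*([\underline\omega,x])\quad\text{as }N\to\infty,$$
where $\phi^*$ is the conditional expectation of $\phi$ with respect to the $\sigma$-algebra of $G$-invariant sets; since $\phi$ is bounded, passing from $\sum_{i=0}^{\ell-1}$ to $\sum_{i=1}^{\ell}$ changes the average by $O(1/\ell)$ and does not affect the limit. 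By the ergodic decomposition, $\phi^*([\underline\omega,x])=\int\phi\,{\rm d}\zeta$ for the ergodic component $\zeta$ through $[\underline\omega,x]$, so the hypothesis $\int\phi\,{\rm d}\zeta>\theta$ for every ergodic component forces $\phi^*>\theta$ at $\eta$-a.e.\ point.

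Finally I would upgrade this almost-everywhere convergence to a measure estimate. Since $\frac1\ell\sum_{i=1}^{\ell}\phi(G^{-i}(\cdot))\to\phi^*>\theta$ $\eta$-a.e., convergence in measure (equivalently, Egorov's theorem) provides, for the fixed gap $\theta-\alpha>0$, an integer $\ell=\ell(\delta)$ with
$$\eta\Bigl(\Bigl\{[\underline\omega,x]:\bigl|\tfrac1\ell\sum_{i=1}^{\ell}\phi(G^{-i}([\underline\omega,x]))-\phi^*([\underline\omega,x])\bigr|<\theta-\alpha\Bigr\}\Bigr)>1-\delta,$$
and on this set $\frac1\ell\sum_{i=1}^{\ell}\phi(G^{-i}(\cdot))>\phi^*-(\theta-\alpha)>\alpha$, so this set lies in $\Lambda^G_{\ell,1}(E,\alpha)$. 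Hence $\eta(\Lambda^G_{\ell,1}(E,\alpha))>1-\delta$. The only points needing care are the line-bundle identity relating $\|DG^{-\ell}|_E\|$ to the Birkhoff sum of $\phi$ along the backward orbit, and arranging the ergodic decomposition so that the pointwise limit $\phi^*$ itself—not merely each component's spatial average—exceeds $\theta$; both are routine, and I expect no serious obstacle.
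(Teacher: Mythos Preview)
Your argument is correct and essentially identical to the paper's: both use that $\dim E=1$ to write $\log\|DG^{-\ell}|_{E}\|$ as a Birkhoff sum along the backward orbit, invoke the ergodic theorem together with the ergodic decomposition to get a pointwise limit strictly below $-\alpha$ (equivalently, your $\phi^*>\theta>\alpha$), and then pass from a.e.\ convergence to a uniform $\ell$ on a set of measure larger than $1-\delta$. The only cosmetic difference is that the paper works with $\log\|DG^{-1}|_E\|$ and the set $\{[\underline\omega,x]:\frac1n\sum_{i=0}^{n-1}\log\|DG^{-1}|_{E(G^{-i}[\underline\omega,x])}\|\le-\alpha,\ \forall n\ge\ell\}$, while you phrase it via $\phi=\log\|DG|_E\|$ and Egorov; these are the same estimate.
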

\begin{proof}
Since $\dim E=1$ and $E$ is continuous, one has that for $\eta$-almost every point $[\underline\omega,x]$, 
$$\lim_{n\to\infty}\frac{1}{n}\sum_{i=0}^{n-1}\log\|DG^{-1}|_{E(G^{-i}([\underline\omega,x]))}\|\le -\theta.$$
Thus for any $\delta>0$, there is $\ell=\ell(\delta)$ such that
$$\eta(\{[\underline\omega,x]:~\frac{1}{n}\sum_{i=0}^{n-1}\log\|DG^{-1}|_{E(G^{-i}([\underline\omega,x]))}\|\le -\alpha,~\forall n\ge\ell\})>1-\delta.$$
It is clear that $\{[\underline\omega,x]:~\frac{1}{n}\sum_{i=0}^{n-1}\log\|DG^{-1}|_{E(G^{-i}([\underline\omega,x]))}\|\le -\alpha,~\forall n\ge\ell\}\subset\Lambda^G_{\ell,1}(E,\alpha)$ since $\dim E=1$ . Thus one can conclude.
%
\end{proof}

For the proof of Proposition~\ref{Pro:uniform-pesin-block-random-perturbation}, one needs a recent Pliss lemma in \cite{AnV17}. One can see a proof of Lemma~\ref{Lem:pliss-like} in Appendix~\ref{Sec:pliss-like}. 
\begin{Lemma}\label{Lem:pliss-like}
For any $\gamma_1<\gamma_2\le \max\{0,\gamma_2\}<C$, for any $\varepsilon>0$, there is $\rho=\rho(\gamma_1,\gamma_2,C,\varepsilon)>0$ with the following property.

For any sequence $\{a_n\}_{n\in\NN}\subset \RR$ satisfying:

\begin{itemize}

\item $|a_n|\le C$,

\item there is a subset $\LL\subset \NN$ satisfying $\liminf_{n\to +\infty}\frac{1}{n}\#\{[0,n-1]\cap \LL\}>1-\rho$ such that $a_n\le \gamma_1$ for any $n\in\LL$,

\end{itemize}
then there is a subset $\JJ\subset\NN$ satisfying $\limsup_{n\to +\infty}\frac{1}{n}\#\{[0,n-1]\cap \JJ\}>1-\varepsilon$ such that for any $j\in\JJ$, one has that
$$\sum_{i=0}^{n-1}a_{i+j}\le n\gamma_2,~~~\forall n\in\NN.$$

\end{Lemma}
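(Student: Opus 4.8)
The plan is to reduce, by a one‑line counting inequality, to a Pliss‑type statement for the indicator of the ``bad'' index set $\BB:=\NN\setminus\LL$, and then to extract $\JJ$ either by a future‑maximum argument or by feeding the resulting sequence into the density version of the Pliss lemma of \cite{AnV17}.

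\emph{Reduction.} First I would set $\kappa:=\frac{\gamma_2-\gamma_1}{C-\gamma_1}$; the hypothesis $\gamma_1<\gamma_2\le\max\{0,\gamma_2\}<C$ says exactly that $C>0$ and $\gamma_1<\gamma_2<C$, so $\kappa\in(0,1)$. For any $j$ and any $n\ge1$, splitting $\{j,\dots,j+n-1\}$ according to membership in $\LL$ or in $\BB$, using $a_i\le\gamma_1$ on $\LL$ and $a_i\le C$ on $\BB$, and substituting $\#(\LL\cap[j,j+n-1])=n-\#(\BB\cap[j,j+n-1])$, one obtains
$$\sum_{i=0}^{n-1}a_{i+j}\ \le\ \gamma_1 n+(C-\gamma_1)\,\#\big(\BB\cap[j,j+n-1]\big).$$
Hence $\sum_{i=0}^{n-1}a_{i+j}\le\gamma_2 n$ holds as soon as $\#(\BB\cap[j,j+n-1])\le\kappa n$. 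So it is enough to produce $\JJ\subset\NN$ with $\limsup_{n}\frac1n\#([0,n-1]\cap\JJ)>1-\varepsilon$ such that every $j\in\JJ$ satisfies $\#(\BB\cap[j,j+n-1])\le\kappa n$ for all $n\ge1$.

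\emph{Extracting $\JJ$.} I would put $\rho:=\frac{\kappa}{2}\min\{\varepsilon,1\}$, so that $0<\rho<\kappa$ and $\rho/\kappa<\varepsilon$; this is the constant $\rho(\gamma_1,\gamma_2,C,\varepsilon)$ of the statement. The assumption $\liminf_n\frac1n\#([0,n-1]\cap\LL)>1-\rho$ gives $\limsup_n\frac1n\#([0,n-1]\cap\BB)<\rho$, hence $\#(\BB\cap[0,N-1])\le\rho N$ for all $N\ge N_0$. Now consider $F(m):=\#(\BB\cap[0,m-1])-\kappa m$ (so $F(0)=0$ and each increment of $F$ is $1-\kappa$ or $-\kappa$) and its future maximum $M(m):=\sup_{p\ge m}F(p)$, which is finite and attained since $F(p)\le(\rho-\kappa)p\to-\infty$. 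A direct computation shows that $j$ lies in the desired set $\JJ$ — i.e.\ $\#(\BB\cap[j,j+n-1])\le\kappa n$ for all $n$ — if and only if $F(j)=M(j)$. From $M(m)=\max\{F(m),M(m+1)\}$ one reads off that $M$ is non‑increasing, that it strictly decreases at $m$ only when $m\notin\BB$ and then by at most $\kappa$, and that whenever $M$ strictly decreases at $m$ one has $m\in\JJ$. Therefore, for $N\ge N_0$,
$$\kappa\cdot\#(\JJ\cap[0,N-1])\ \ge\ M(0)-M(N)\ \ge\ 0-(\rho-\kappa)N\ =\ (\kappa-\rho)N ,$$
so $\#(\JJ\cap[0,N-1])\ge(1-\rho/\kappa)N$ and $\limsup_n\frac1n\#([0,n-1]\cap\JJ)\ge1-\rho/\kappa>1-\varepsilon$, which completes the proof.

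The argument is elementary, and the only point that really needs care is that the hypothesis on $\LL$ is asymptotic rather than a uniform bound on partial sums — that is precisely why I pass to the future‑maximum function $M$ instead of running a naive Pliss argument on finite windows $[0,N]$ (where a ``Pliss time for $[0,N]$'' only controls $n\le N-j$ and the tail must be patched up separately). Alternatively, after the reduction one can feed the sequence $\big(\kappa-\mathbf{1}_{\BB}(i)\big)_{i}$ directly into the Pliss lemma of \cite{AnV17}, which already accommodates a density hypothesis; I expect that the choice between citing that lemma and giving the self‑contained $M$‑function argument is the only real decision, the rest being routine bookkeeping.
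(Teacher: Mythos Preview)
Your argument is correct, and it takes a genuinely different route from the paper's proof. The paper works directly with the sequence $\{a_n\}$: it first invokes the classical Pliss lemma to see that $\JJ$ is infinite, then argues by contradiction, decomposing $[1,J]\setminus\JJ$ into maximal intervals on which the partial sums exceed $\gamma_2\cdot(\text{length})$, and finally applies the same counting inequality you isolate (with the constant $\frac{\gamma_2-\gamma_1}{C-\gamma_1}$) on the union of these intervals to force $\#([1,J]\setminus\LL)\ge\frac{\gamma_2-\gamma_1}{C-\gamma_1}\varepsilon J$, contradicting the choice of $\rho$. You instead perform the counting reduction up front, replacing the problem by a Pliss-type statement for the $0$--$1$ indicator of $\BB=\NN\setminus\LL$, and then extract $\JJ$ as the set of future-record times of the walk $F(m)=\#(\BB\cap[0,m-1])-\kappa m$ via the telescoping bound $\kappa\cdot\#(\JJ\cap[0,N-1])\ge M(0)-M(N)$. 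Your version is self-contained (no appeal to the classical Pliss lemma) and actually yields the stronger conclusion $\liminf_n\frac{1}{n}\#(\JJ\cap[0,n-1])\ge 1-\rho/\kappa>1-\varepsilon$; the paper's interval-decomposition argument, on the other hand, stays closer to the way Pliss-type lemmas are usually proved and makes transparent why the complement of $\JJ$ carries large partial sums.
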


\begin{proof}[Proof of Proposition~\ref{Pro:uniform-pesin-block-random-perturbation}]
We apply Lemma~\ref{Lem:pliss-like} to put
$$\gamma_1=-(\theta+\alpha)/2,~\gamma_2=-\alpha,~C=\max_{[\underline\omega,x]\in\Omega^\ZZ\times M}|\log\|DG([\underline\omega,x])\||.$$
For any $\varepsilon>0$, take $\varepsilon'$ such that $(1-\varepsilon')^2>1-\varepsilon$ and fix $\rho=\rho(\gamma_1,\gamma_2,C,\varepsilon')>0$ as in Lemma~\ref{Lem:pliss-like}.

\begin{Claim}
There is $\ell\in\NN$ such that for any $G$-invariant measure $\eta_N$, which close to $\eta$, one also has that
$$\eta_N(\Lambda_{\ell,1}^G(E,(\theta+\alpha)/2))>1-\rho\varepsilon'.$$

\end{Claim}

\begin{proof}[Proof of the Claim]

By Lemma \ref{Lem:find-integer}, there exists $\ell \in \NN$ such that
$$
\eta(\Lambda^G_{\ell,1}(E,(2\theta+\alpha)/3))>1-\rho\varepsilon'.
$$
Since $\Lambda^G_{\ell,1}(E,(2\theta+\alpha)/3)\subset\left\{[\underline\omega,x]\in\Lambda^G:\|DG^{-\ell}|_{E([\underline\omega,x])}\|< {\rm e}^{-(\theta+\alpha)\ell/2}\right\}$, one has that
$$
\eta(\{[\underline\omega,x]\in\Lambda^G:\|DG^{-\ell}|_{E([\underline\omega,x])}\|< {\rm e}^{-(\theta+\alpha)\ell/2}\})>1-\rho\varepsilon'.
$$

Now for a sequence of $G$-invariant measures $\{\eta_n\}$ such that $\lim_{n\to\infty}\eta_n=\eta$, by the fact that $\{[\underline\omega,x]\in\Lambda^G:\|DG^{-\ell}|_{E([\underline\omega,x])}\|<{\rm e}^{-(\theta+\alpha)\ell/2 }\}$ is an open set, one has that
\begin{eqnarray*}
&&\liminf_{n\to\infty}\eta_n(\{[\underline\omega,x]\in\Lambda^G:\|DG^{-\ell}|_{E([\underline\omega,x])}\|< {\rm e}^{-(\theta+\alpha)\ell/2}\})\\
&\ge& \eta(\{[\underline\omega,x]\in\Lambda^G:\|DG^{-\ell}|_{E([\underline\omega,x])}\|< {\rm e}^{-(\theta+\alpha)\ell/2 }\})>1-\rho\varepsilon'.
\end{eqnarray*}
Since $\{[\underline\omega,x]\in\Lambda^G:\|DG^{-\ell}|_{E([\underline\omega,x])}\|<{\rm e}^{-(\theta+\alpha)\ell/2 }\}\subset \Lambda_{\ell,1}^G(E,(\theta+\alpha)/2)$, one can conclude.
\end{proof}

It follows from the Birkhoff ergodic theorem we know for $\eta_N$ almost every $[\underline{\omega},x]$ the limit
$$
\varphi([\underline{\omega},x]):=\lim_{n\to\infty}\frac{1}{n}\#\left\{i:~0\le i\le n-1,~G^{-i \ell}([\underline\omega,x])\in\Lambda^G_{\ell,1}(E,(\theta+\alpha)/2)\right\}
$$
exists and 
$$
\int \varphi d\eta_N=\eta_N(\Lambda^G_{\ell,1}(E,(\theta+\alpha)/2)).
$$
Therefore, $\int \varphi d\eta_N>1-\rho\varepsilon'$ by the above claim.
Let $B=\left\{[\underline{\omega},x]: \varphi([\underline{\omega},x])>1-\rho\right\}$.
\begin{eqnarray*}
1-\eta_N(B) &=& \eta_N(\{[\underline{\omega},x]:1-\varphi([\underline{\omega},x])\ge \rho\}) \\
&\le & \frac{\int (1-\varphi) d\eta_N}{\rho}\\
&<& \frac{\rho \varepsilon'}{\rho}=\varepsilon'.
\end{eqnarray*}
Thus $\eta_N(B)>1-\varepsilon'$.
For any point $[\underline{\omega},x]\in B$, set
$$a_i=\frac{1}{\ell}\log\|DG^{-\ell}|_{E(G^{-i\ell}([\underline\omega,x]))}\|, ~~~\forall i\ge 0.$$
and 
$$
\LL=\{i\in \NN \cup \{0\}: G^{-i\ell}([\underline{\omega},x])\in \Lambda_{\ell,1}^G(E, (\theta+\alpha)/2)\}
$$
Then we have
\begin{itemize}
\item $|a_n|\le C$ for every $n\in \NN \cup \{0\}$;

\item For every $i\in \LL$, $a_i<\gamma_1=-(\theta+\alpha)/2$ and

\item $\lim_{n\to +\infty}\frac{1}{n}\#\{[0,n-1]\cap \LL\}=\varphi([\underline{\omega},x])>1-\rho.$
\end{itemize}

%

Thus, by applying Lemma~\ref{Lem:pliss-like}, there is a subset $\JJ\subset\NN\cup\{0\}$ such that 
\begin{itemize}
\item
for any $j\in\JJ$, one has that for any $n\in\NN$,
$$\sum_{i=0}^{n-1} a_{j+i}\le -n\alpha.$$
\item $\limsup_{n\to\infty}\frac{1}{n}\#\{[0,n-1]\cap \JJ\}>1-\varepsilon'.$

\end{itemize}
In other words, for any $j\in\JJ$,
$$\prod_{i=0}^{n-1}\|DG^{-\ell}|_{E(G^{-(i+j)\ell}([\underline\omega,x]))}\|\le {\rm e}^{-n\ell\alpha},~~~\forall n\in\NN.$$
Consequently, by applying the Birkhoff ergodic theorem, for almost every $[\underline{\omega},x]\in B$ we have
$$\lim_{n\to\infty}\frac{1}{n}\#\left\{i:~0\le i\le n-1,~G^{-i\ell}([\underline\omega,x])\in\Lambda^G_\ell(E,\alpha)\right\}>1-\varepsilon'.$$
Therefore, there exists a subset 
$$
B_m=\left\{[\underline{\omega},x]\in B: \frac{1}{m}\#\{i\in \{0,\cdots,m-1\}: G^{-i\ell}([\underline{\omega},x])\in \Lambda^G_\ell(E,\alpha)\}>1-\varepsilon' \right\}
$$
such that $\eta_N(B_m)>1-\varepsilon'$.
Thus
\begin{eqnarray*}
\eta_N(\Lambda_{\ell}^G(E,\alpha))&=&\int \frac{1}{m}\sum_{i=0}^{m-1}\chi_{\Lambda_{\ell}^G(E,\alpha)}(G^{-il}([\underline{\omega},x]))d\eta_N\\
&\ge & \int_{B_m}\frac{1}{m}\sum_{i=0}^{m-1}\chi_{\Lambda_{\ell}^G(E,\alpha)}(G^{-il}([\underline{\omega},x]))d\eta_N\\
&=& \int_{B_m}\frac{1}{m}\#\{i\in \{0,\cdots,m-1\}: G^{-i\ell}([\underline{\omega},x])\in \Lambda^G_\ell(E,\alpha)\}d\eta_N\\
&>&(1-\varepsilon')\eta_N(B_m)>(1-\varepsilon')^2,
\end{eqnarray*}
where we use the  $G^{-\ell}$-invariance of $\eta_N$ in the first equality.
By the choice of $\varepsilon'$, one gets
$$
\eta_N(\Lambda_{\ell}^G(E,\alpha))>(1-\varepsilon')^2>1-\varepsilon.
$$
The proof is complete now.
\end{proof}

\subsection{Consequences for one diffeomorphism}

As some consequence of Proposition~\ref{Pro:uniform-pesin-block-random-perturbation}, one has the following results about the random perturbation and the ergodic limit for one diffeomorphism.
\begin{Proposition}\label{Pro:uniform-pesin-block}
Assume that an attracting set $\Lambda$ of a $C^2$ diffeomorphism $f$ admits a dominated splitting $TM|_\Lambda=E\oplus_\succ E^c\oplus_\succ F$ with $\dim E^c=1$. 
Assume that there is a regular random perturbation $\{(G,\nu_n)\}_{n\in\NN}$ generated by $\{(\Omega,\ell,\nu_n)\}_{n\in\NN}$ of $f$ such that
\begin{itemize}

\item Each random dynamical system $(G,\nu_n)$ has an ergodic stationary measure $\mu_n$ such that $\lim_{n\to\infty}\mu_n=\mu$.

\end{itemize}

If there is a constant $\alpha>0$ such that 
$$\inf\{\int\log\|Df|_{E^c}\|d\nu:~\nu~\textrm{is an ergodic component of }\mu\}>\alpha,$$
then for any $\varepsilon>0$, there is $\ell=\ell(\varepsilon)>0$ such that
$$\liminf_{n\to\infty}\mu^G_n(\Lambda^G_\ell(E\oplus E^c,\alpha))>1-\varepsilon.$$

\end{Proposition}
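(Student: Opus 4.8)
The plan is to transfer the statement to the extended dynamical system $G$ and apply Proposition~\ref{Pro:uniform-pesin-block-random-perturbation} to the one-dimensional bundle $E^c$. First I would fix $\omega_f\in\Omega$ with $\ell(\omega_f)=f$ and set $\Lambda^G=\{\omega_f\}^\ZZ\times\Lambda$. By Lemma~\ref{Lem:extend-bundle} this is a compact $G$-invariant set carrying a dominated splitting $TM|_{\Lambda^G}=E\oplus_\succ E^c\oplus_\succ F$ of the same type, with $\dim E^c=1$. Since the stationary measures $\mu_n$ are only supported in a (shrinking) neighborhood of the attracting set $\Lambda$, and correspondingly $\mu_n^G$ only in a neighborhood of $\Lambda^G$, I need this splitting on a neighborhood of $\Lambda^G$: by the persistence of dominated splittings (Proposition~\ref{Pro:extend-dominated}, applied to the two coarsenings $E\oplus_\succ(E^c\oplus F)$ and $(E\oplus E^c)\oplus_\succ F$ and intersecting) there is a neighborhood $U^G$ of $\Lambda^G$ whose maximal $G$-invariant subset $\widehat\Lambda^G$ carries a dominated splitting $\widehat E\oplus_\succ\widehat{E^c}\oplus_\succ\widehat F$ extending the one on $\Lambda^G$, with $\widehat{E^c}$ one-dimensional and continuous. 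For $n$ large, $\supp\mu_n^G\subset U^G$, hence $\mu_n^G$ is a $G$-invariant measure on $\widehat\Lambda^G$; by Lemma~\ref{Lem:invariant-measure-G} each $\mu_n^G$ is $G$-ergodic, and by Corollary~\ref{Cor:lifted-limit}, $\mu_n^G\to\mu^G=\delta_{\omega_f}^\ZZ\times\mu$, which is also supported on $\widehat\Lambda^G$.

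Next I would verify the standing hypothesis of Proposition~\ref{Pro:uniform-pesin-block-random-perturbation} for $(\widehat\Lambda^G,\widehat{E^c},\mu^G)$. Since $G$ restricted to $\{\omega_f\}^\ZZ\times M$ is a copy of $f$, the ergodic components of $\mu^G$ are exactly the lifts $\delta_{\omega_f}^\ZZ\times\nu$ of the ($f$-)ergodic components $\nu$ of $\mu$, and the integrand $\log\|DG|_{\widehat{E^c}}\|$ pulls back to $\log\|Df|_{E^c}\|$, so $\int\log\|DG|_{\widehat{E^c}}\|\,\mathrm d(\delta_{\omega_f}^\ZZ\times\nu)=\int\log\|Df|_{E^c}\|\,\mathrm d\nu$. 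By hypothesis this quantity is $>\alpha$ uniformly in $\nu$, so I can fix $\theta$ with $\alpha<\theta<\inf_\nu\int\log\|Df|_{E^c}\|\,\mathrm d\nu$; then $\int\log\|DG|_{\widehat{E^c}}\|\,\mathrm d\zeta>\theta>\alpha$ for every ergodic component $\zeta$ of $\mu^G$, which is precisely what Proposition~\ref{Pro:uniform-pesin-block-random-perturbation} requires.

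Then Proposition~\ref{Pro:uniform-pesin-block-random-perturbation}, applied with $\widehat\Lambda^G$ in place of $\Lambda^G$, $\widehat{E^c}$ in place of $E$, $\mu^G$ in place of $\eta$, $\mu_n^G$ in place of $\eta_n$, and the constants $\theta>\alpha$ just chosen, yields for every $\varepsilon>0$ an $\ell=\ell(\varepsilon)$ with $\liminf_{n\to\infty}\mu_n^G(\widehat\Lambda^G_\ell(\widehat{E^c},\alpha))>1-\varepsilon$. To conclude I would observe that $\widehat{E^c}$ is dominated by $\widehat E$, so backward contraction of $\widehat{E^c}$ at rate $\alpha$ forces (even faster) backward contraction of $\widehat E$; hence, up to the harmless modification of constants noted right after the definition of $\Lambda^G_{\ell,n}(\cdot,\cdot)$, one has $\widehat\Lambda^G_\ell(\widehat{E^c},\alpha)=\widehat\Lambda^G_\ell(\widehat E\oplus\widehat{E^c},\alpha)$, which is the set $\Lambda^G_\ell(E\oplus E^c,\alpha)$ appearing in the statement. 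This gives the claim.

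I do not expect a genuinely new difficulty: the Pliss-type mechanism doing the real work already lives inside Proposition~\ref{Pro:uniform-pesin-block-random-perturbation}. The delicate points here are bookkeeping — extending the three-term dominated splitting to a neighborhood $U^G$ so that the (a priori only nearby, and non-ergodic-for-$f$) lifted measures $\mu_n^G$ are supported where $\widehat{E^c}$ is defined and continuous, confirming that $\supp\mu_n$ indeed lies in an arbitrarily small neighborhood of $\Lambda$ for large $n$ because $\Lambda$ is attracting and the perturbations shrink, and checking that the ``uniform-over-ergodic-components'' lower bound survives the lift $\mu\mapsto\mu^G$. I expect the neighborhood/extension step to be the one most in need of a careful statement.
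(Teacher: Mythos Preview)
Your proposal is correct and follows essentially the same route as the paper: lift everything to the extended system via Corollary~\ref{Cor:lifted-limit}, extend the dominated splitting to a neighborhood via Proposition~\ref{Pro:extend-dominated} so that the $\mu_n^G$ live where $E^c$ is defined, and then apply Proposition~\ref{Pro:uniform-pesin-block-random-perturbation}. Your write-up is in fact more careful than the paper's terse proof about the neighborhood/extension step and the identification $\Lambda^G_\ell(E^c,\alpha)=\Lambda^G_\ell(E\oplus E^c,\alpha)$, both of which the paper handles in a single sentence each.
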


\begin{proof}
Suppose that $\ell(\omega_f)=f$. Note that $\mu$ can be lifted to be a measure on $\{\underline\omega_f\}\times M$ and we have that $\mu_n^G\to\mu^G$ as $n\to\infty$ by Corollary~\ref{Cor:lifted-limit}. Moreover, by Proposition~\ref{Pro:extend-dominated}, the support of $\mu_n^G$ admits the same kind of dominated splitting for $n$ large enough. After the lift, one has that any ergodic component of $\mu^G$ has its Lyapunov exponent larger than $\alpha$. Thus, one can apply Proposition~\ref{Pro:uniform-pesin-block-random-perturbation} to conclude.

\end{proof}

%
%

%
%
%
%
%

The following result is some corollary of Proposition~\ref{Pro:uniform-pesin-block-random-perturbation}:

\begin{Corollary}\label{Cor:one-diff-pesin-block}
Assume that $\Lambda$ is an attracting set of a $C^2$ diffeomorphism $f$ with a partially hyperbolic splitting $TM|_\Lambda=E^u\oplus_\succ E_1^c\oplus_\succ \cdots\oplus_\succ E_k^c\oplus_\succ E^s$ with $\dim E_i^c=1$, for $1\le i\le k$. Assume that $\{\mu_n\}\subset \cG_j$ is a sequence of ergodic measures and $\lim_{n\to\infty}\mu_n=\mu$. If there is $\alpha>0$ such that 
$$\inf\{\int\log\|Df|_{E^c_j}\|d\nu:~\nu~\textrm{is an ergodic component of }\mu\}>\alpha,$$
then for any $\varepsilon>0$, there is $\ell=\ell(\varepsilon)>0$ such that
$$\liminf_{n\to\infty}\mu_n(\Lambda_\ell(E_j^c,\alpha))>1-\varepsilon,$$
where $\Lambda_\ell(E_j^c,\alpha)=\{x\in\Lambda:~\prod_{i=0}^{n-1}\|Df^{-\ell}|_{E^c(f^{-i \ell}(x))}\|\le {\rm e}^{-\alpha \ell n},~\forall n\in\NN\}.$

\end{Corollary}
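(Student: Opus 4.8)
The plan is to deduce Corollary~\ref{Cor:one-diff-pesin-block} directly from Proposition~\ref{Pro:uniform-pesin-block-random-perturbation} by regarding $f$ as a degenerate extended dynamical system. Concretely, I would take $\Omega$ to be a single point $\{\omega_f\}$ with $\ell(\omega_f)=f$; then $\Omega^\ZZ$ is also a single point, the extended dynamical system $G$ acts on $\Omega^\ZZ\times M\cong M$ as $f$, the compact $G$-invariant set $\Lambda^G$ may be taken to be $\Lambda$, and for every $\ell\in\NN$ and $\alpha>0$ one has $\Lambda^G_\ell(E_j^c,\alpha)=\Lambda_\ell(E_j^c,\alpha)$ under this identification. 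By Lemma~\ref{Lem:invariant-measure-G} a measure is $G$-invariant (resp.\ ergodic for $G$) iff it is $f$-invariant (resp.\ ergodic for $f$), and by Lemma~\ref{Lem:extend-bundle} Lyapunov exponents are preserved; weak-$*$ convergence is of course unaffected. Thus the given sequence $\{\mu_n\}$ of ergodic $f$-measures with $\mu_n\to\mu$ becomes a sequence of ergodic $G$-measures converging to the $G$-invariant measure $\mu$, all supported on $\Lambda$ (each $\mu_n\in\cG_j$ is supported on $\Lambda$, hence so is $\mu$), and $E_j^c$ is a one-dimensional continuous $DG$-invariant sub-bundle over $\Lambda$, being one of the bundles of a dominated splitting.

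Next I would check the numerical hypothesis of Proposition~\ref{Pro:uniform-pesin-block-random-perturbation}. Since by assumption
$$\inf\{\textstyle\int\log\|Df|_{E_j^c}\|\,d\nu:~\nu~\textrm{an ergodic component of }\mu\}>\alpha,$$
I can fix a constant $\theta$ strictly between $\alpha$ and this infimum; then every ergodic component $\zeta$ of $\mu$ satisfies $\int\log\|DG|_{E_j^c}\|\,d\zeta\ge\inf\{\cdots\}>\theta$, so the pair $(\theta,\alpha)$ has the required property $\theta>\alpha>0$. Here I use that, under the one-point identification, the ergodic decomposition of $\mu$ as a $G$-measure coincides with its ergodic decomposition as an $f$-measure.

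With all hypotheses verified, Proposition~\ref{Pro:uniform-pesin-block-random-perturbation} applies verbatim: for every $\varepsilon>0$ there is $\ell=\ell(\varepsilon)\in\NN$ with $\liminf_{n\to\infty}\mu_n(\Lambda^G_\ell(E_j^c,\alpha))>1-\varepsilon$, which is precisely $\liminf_{n\to\infty}\mu_n(\Lambda_\ell(E_j^c,\alpha))>1-\varepsilon$. Note that the disintegration property built into $\cG_j$ is not used — only the ergodicity of the $\mu_n$, the convergence $\mu_n\to\mu$, that the measures live on $\Lambda$ (so that $E_j^c$ is defined along them), and the uniform integral lower bound on the ergodic components of $\mu$ enter. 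The only points requiring any care are the strictness of the gap, which is exactly what allows one to insert $\theta$ between $\alpha$ and the infimum, and the bookkeeping that the degenerate random system genuinely reproduces the dynamics of $f$; neither is a serious obstacle, so the statement is in effect just a translation of the already-established Proposition~\ref{Pro:uniform-pesin-block-random-perturbation} to the deterministic setting.
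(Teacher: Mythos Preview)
Your proposal is correct and follows essentially the same route as the paper: both deduce the corollary from Proposition~\ref{Pro:uniform-pesin-block-random-perturbation} by viewing $f$ as an extended dynamical system with $\ell(\omega_f)=f$. The only cosmetic difference is that you take $\Omega=\{\omega_f\}$ to be a single point and identify $\Omega^\ZZ\times M$ with $M$ directly, whereas the paper keeps a general $\Omega$ and passes through Corollary~\ref{Cor:lifted-limit} to lift the measures; your choice of $\theta$ between $\alpha$ and the infimum, and your observation that the $\cG_j$ disintegration property is unused, are both accurate.
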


\begin{proof}

The dynamics of one diffeomorphism can be embedded into an extended dynamical system $G$ generated by $(\Omega,\ell)$ such that $\ell(\omega_f)=f$. One applies Corollary~\ref{Cor:lifted-limit} and Proposition~\ref{Pro:uniform-pesin-block-random-perturbation} to take $E=E^u\oplus_\succ E_1^c\oplus_\succ \cdots\oplus_\succ E_{i-1}^c$, $E^c=E_i^c$ and $F=E_{i+1}^c\oplus_\succ \cdots\oplus_\succ E_k^c\oplus_\succ E^s$ and identify $\Lambda_\ell(E_j^c,\alpha)$ and $\Lambda^G_\ell(E_j^c,\alpha)\cap\{\omega_f\}^\ZZ\times M$.
\end{proof}

\section{The disintegration along measurable partitions subordinate to unstable manifold}\label{Sec:disintegration}
Some definitions and results in Section~\ref{Sec:gibbs-cu} can be regarded as some special case of this section since the dynamics of one diffeomorphism can be embedded in the extended dynamical system $G$.
\subsection{Plaque families for the extended dynamical systems}

\begin{Definition}\label{Def:plaque-family-extended}
Assume that $\Lambda^G\subset\Omega^\ZZ\times M$ is a compact $G$-invariant set and $E\subset TM|_{\Lambda^G}$ is an invariant sub-bundle. A \emph{plaque family} of $E$, which is denoted by $\{W^E([\underline\omega,x])\}_{[\underline\omega,x]\in\Lambda^G}$, is a family of embedded sub-manifolds of dimension $\dim E$, each one is diffeomorphic to the unit ball in $\RR^{\dim E}$, and has the following properties:
\begin{itemize}

\item $W^E([\underline\omega,x])\subset \{\underline\omega\}\times M$ for any $[\underline\omega,x]\in\Omega^\ZZ\times M$;

\item For any point $[\underline\omega,x]\in\Lambda^G$, one has $T W^E([\underline\omega,x])|_{[\underline\omega,x]}=E([\underline\omega,x])$;

\item For any neighborhood $U\subset W^E(G([\underline\omega,x]))$ of $[\underline\omega,x]\in\Lambda^G$, there is a neighborhood $V$ of $[\underline\omega,x]$ in $W^E([\underline\omega,x])$ such that $G(V)\subset U$.

\end{itemize}
Denote by $W^E_\varepsilon([\underline\omega,x])$ the $\varepsilon$-neighborhood of $[\underline\omega,x]$ in $W^E([\underline\omega,x])$. The last property can be represented as: for any $\varepsilon>0$, there is $\delta>0$ such that for any $[\underline\omega,x]\in\Lambda^G$, one has $G(W^E_\delta([\underline\omega,x]))\subset W^E_\varepsilon(G([\underline\omega,x]))$.

\end{Definition}

In fact, one can require some higher regularity along plaque families. Generally, one can only increase a little bit of the  regularity in the dominated case. We will give a stronger notion called \emph{$(1+\alpha)$-domination}. A dominated splitting $E\oplus_\succ F$ on $\Lambda^G$ is said to be a \emph{$(1+\alpha)$-dominated splitting} if there are constants $C>0$ and $\lambda\in(0,1)$, one has for any $[\underline\omega,x]\in\Lambda^G$ and any $n\in\NN$,
$$\|DG^n|_{F([\underline\omega,x])}\|^{1+\alpha}.\|DG^{-n}|_{E(G^n([\underline\omega,x]))}\|\le C\lambda^n,~~~\|DG^n|_{F([\underline\omega,x])}\|.\|DG^{-n}|_{E(G^n([\underline\omega,x]))}\|^{1+\alpha}\le C\lambda^n.$$

Since the norms of the derivatives are uniformly bounded, one has the following lemma, whose proof could be an exercise.
\begin{Lemma}\label{Lem:alpha-dominate}
If $\Lambda^G$ is a compact $G$-invariant set with a dominated splitting $E\oplus_{\succ} F$, then there is $\alpha>0$ (possibly small) such that $E\oplus_\succ F$ is a $(1+\alpha)$-dominated splitting.

\end{Lemma}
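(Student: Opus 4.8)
The plan is to use that the cocycle $DG$ over the compact space $\Omega^\ZZ\times M$ has uniformly bounded norm in both time directions, and then to absorb the extra exponent $\alpha$ into the domination rate. In other words, $(1+\alpha)$-domination for small $\alpha$ is just ordinary domination at a slightly slower exponential rate, and the slack needed to slow it down is exactly the uniform bound on $\|DG^{\pm1}\|$.

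First I would fix a constant $K>1$ with $\|DG|_{[\underline\omega,x]}\|\le K$ and $\|DG^{-1}|_{[\underline\omega,x]}\|\le K$ for every $[\underline\omega,x]\in\Omega^\ZZ\times M$. Such a $K$ exists because $\Omega$ is compact and $\ell:\Omega\to{\rm Diff}^2(M)$ is continuous, so the family $\{f_\omega\}_{\omega\in\Omega}$ and the family of inverses are bounded in the $C^1$ topology on the compact manifold $M$; recall that $DG$ acts on the fibre over $[\underline\omega,x]$ as $Df_{\omega_0}$. Iterating, $\|DG^n|_{[\underline\omega,x]}\|\le K^n$ and $\|DG^{-n}|_{[\underline\omega,x]}\|\le K^n$ for all $n\ge1$, and since the norm of the restriction of a linear map to a subspace is at most the norm of the map, in particular
$$\|DG^n|_{F([\underline\omega,x])}\|\le K^n,\qquad \|DG^{-n}|_{E(G^n([\underline\omega,x]))}\|\le K^n,\qquad \forall\,n\ge1.$$

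Next, let $C>0$ and $\lambda\in(0,1)$ be the constants witnessing that $E\oplus_\succ F$ is a dominated splitting, so $\|DG^n|_{F([\underline\omega,x])}\|\cdot\|DG^{-n}|_{E(G^n([\underline\omega,x]))}\|\le C\lambda^n$ for all $[\underline\omega,x]$ and all $n$. I would then pick $\alpha>0$ small enough that $K^{\alpha}\lambda<1$, i.e. $0<\alpha<\log(1/\lambda)/\log K$, and set $\lambda'=K^{\alpha}\lambda\in(0,1)$. Using $\|DG^n|_F\|^{\alpha}\le K^{\alpha n}$ one gets, for every $[\underline\omega,x]$ and every $n\ge1$,
$$\|DG^n|_{F([\underline\omega,x])}\|^{1+\alpha}\cdot\|DG^{-n}|_{E(G^n([\underline\omega,x]))}\|\le K^{\alpha n}\,C\lambda^n=C(\lambda')^n,$$
and the symmetric inequality carrying the exponent $1+\alpha$ on the $E$-factor follows in the same way from $\|DG^{-n}|_E\|^{\alpha}\le K^{\alpha n}$. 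Hence $E\oplus_\succ F$ is a $(1+\alpha)$-dominated splitting with constants $C$ and $\lambda'$.

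I do not expect any genuine obstacle: the statement is elementary, and the only point to verify is the uniform two-sided bound on $\|DG^{\pm1}\|$, which is immediate from compactness of $\Omega$ and $M$ together with continuity of $\ell$. The very same computation, applied to the (trivial) extended system associated to a single diffeomorphism, gives the deterministic version of the lemma, and it is this $(1+\alpha)$-domination that will later be fed into the plaque-family and disintegration machinery to gain a little Hölder regularity.
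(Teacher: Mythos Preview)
Your argument is correct and is exactly the intended one: the paper does not give a proof but simply remarks that the lemma follows as an exercise from the fact that the norms of the derivatives are uniformly bounded, which is precisely what you carry out in detail.
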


For dominated splittings, one has the following plaque family theorem \cite[Theorem 5.5]{HPS77}: 
\begin{Theorem}\label{Thm:plaque-family-extended}
Assume that $\Lambda^G\subset\Omega^\ZZ\times M$ is a compact invariant set with a dominated splitting $TM|_{\Lambda^G}=E\oplus_\succ F$. Then there are plaque families tangent to $E$ and $F$. Moreover, given $\alpha\in(0,1)$, if the splitting is $(1+\alpha)$-dominated, then the plaques $W^E$ and $W^F$ can be chosen in the class of $C^{1+\alpha}$ sub-manifolds and varies continuously in the $C^{1+\alpha}$-topology with respect to the base points.

More precisely, for the bundle $E$, there is a continuous map $\Theta:~\Lambda^G\to {\rm Emb}^{r}(\DD^E,~\Omega^\ZZ\times M)$, where
\begin{itemize}
\item $r=1$ or $r=1+\alpha$ depending that we are under the assumption of domination or $(1+\alpha)$-domination, respectively.

\item $\DD^E$ is the unit disc contained in $\RR^E$,  ${\rm Emb}^r(\DD^E,~\Omega^\ZZ\times M)$ is the space of $C^r$ embeddings satisfying the image of each embedding is contained in some $\{\underline\omega\}\times M$.

\end{itemize}
such that for any $[\underline\omega,x]\in\Lambda^G$, one has that $W^E([\underline\omega,x])=\Theta([\underline\omega,x])(\DD^u)$.

One has a similar description for the plaque family of $F$.

\end{Theorem}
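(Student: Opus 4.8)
The plan is to prove Theorem~\ref{Thm:plaque-family-extended} by reducing the case of the extended dynamical system $G$ to the classical plaque family / pseudo-hyperbolic invariant manifold theorem of Hirsch--Pugh--Shub, applied fiberwise. First I would recall the structure of $G$: the base $\Omega^\ZZ$ carries the shift $\sigma$, each fiber $\{\underline\omega\}\times M$ is a copy of $M$, and $G$ maps the fiber over $\underline\omega$ diffeomorphically onto the fiber over $\sigma(\underline\omega)$ via $f_{\omega_0}$. Since $\Omega$ is compact and $\ell:\Omega\to\diff^2(M)$ is continuous, the family $\{f_\omega\}$ and their first and second derivatives are uniformly bounded, so $G$ is a $C^2$ skew-product bundle map over the compact base; in particular the relevant fiber-wise $C^1$ (resp.\ $C^{1+\alpha}$) norms of $DG$ and its inverse are uniformly bounded on $\Omega^\ZZ\times M$.

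Next I would invoke the abstract graph-transform argument. Over the compact $G$-invariant set $\Lambda^G$ with the dominated splitting $TM|_{\Lambda^G}=E\oplus_\succ F$, the standard construction (see \cite[Theorem 5.5]{HPS77}, or its presentation for dominated splittings) produces, for the bundle $E$, a $G$-invariant family of locally invariant $C^1$ plaques by running the graph transform in local charts around the orbit of each point: one writes, in exponential charts along the (uniformly bounded) orbit, the map $G$ as a fiber map whose linear part respects the splitting $E\oplus F$ with the domination inequality giving the required spectral gap, then takes the fixed point of the graph transform in the space of uniformly Lipschitz sections. Because $\Lambda^G$ is compact and the chart data vary continuously with $[\underline\omega,x]$, the resulting plaques $W^E([\underline\omega,x])$ vary continuously in the $C^1$ topology; this yields the continuous map $\Theta:\Lambda^G\to\emb^1(\DD^E,\Omega^\ZZ\times M)$, with the image of each embedding lying inside a single fiber $\{\underline\omega\}\times M$ by construction. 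The three listed properties (plaque inside the fiber, tangent to $E$ at the base point, local $G$-invariance with the $\varepsilon$--$\delta$ formulation) are exactly the output of this construction; the last one is the defining ``local invariance'' of the plaque family and one restates it in the $W^E_\varepsilon$ notation precisely as in Definition~\ref{Def:plaque-family-extended}. The analogous statement for $F$ follows by applying the same argument to $G^{-1}$, which has the reversed dominated splitting $F\oplus_\succ E$.

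For the regularity improvement, I would use Lemma~\ref{Lem:alpha-dominate} to pass from domination to $(1+\alpha)$-domination for some small $\alpha>0$, and then appeal to the $C^{r}$-section theorem: when the bunching condition $\|DG^n|_F\|^{1+\alpha}\cdot\|DG^{-n}|_{E\circ G^n}\|\le C\lambda^n$ holds (together with its symmetric counterpart), the graph transform is a contraction on the space of $C^{1+\alpha}$ sections with $C^{1+\alpha}$-bounded data, so its fixed point — the plaque $W^E$ — is a $C^{1+\alpha}$ submanifold, and the family varies continuously in the $C^{1+\alpha}$ topology over the compact set $\Lambda^G$. This gives the refined conclusion $r=1+\alpha$ and the continuous dependence claimed, and one records $\Theta$ taking values in $\emb^{1+\alpha}(\DD^E,\Omega^\ZZ\times M)$.

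The main obstacle, and the only genuinely non-formal point, is making sure the classical Hirsch--Pugh--Shub machinery — stated for a single $C^r$ diffeomorphism of a compact manifold — applies verbatim to the bundle map $G$ over the non-compact-in-the-usual-sense but still compact metric base $\Omega^\ZZ$. The key observations that make this routine are: (i) $G$ is a genuine homeomorphism of the compact metric space $\Omega^\ZZ\times M$ that is uniformly $C^2$ along fibers; (ii) exponential charts can be chosen fiberwise (they only involve the $M$-factor) and vary continuously in $[\underline\omega,x]$; and (iii) the domination constants $C,\lambda$ are uniform over $\Lambda^G$ by hypothesis. Given these, the graph-transform fixed-point argument is identical to the manifold case, working uniformly over all orbits. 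I would therefore present the proof as: (1) set up fiberwise charts along orbits, (2) express $G$ in these charts and verify the spectral-gap estimates from domination, (3) quote the abstract graph-transform / $C^r$-section theorem to get the invariant plaques and their regularity and continuous dependence, (4) read off the three listed properties, and (5) apply the same to $G^{-1}$ for the bundle $F$.
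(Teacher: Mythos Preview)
Your proposal is correct and aligns with the paper's treatment: the paper does not give its own proof of Theorem~\ref{Thm:plaque-family-extended} but simply attributes it to \cite[Theorem 5.5]{HPS77}, so your reduction to the Hirsch--Pugh--Shub graph-transform/$C^r$-section machinery applied fiberwise over the compact base $\Omega^\ZZ$ is exactly the intended justification, with more detail than the paper itself provides.
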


One has the existence of unstable manifolds in the dominated case. Its proof is almost the same as in the deterministic case. One can see \cite[Section 8]{ABC11} for instance.
\begin{Lemma}\label{Lem:dominated-unstable-extended}
Assume that $\Lambda^G\subset\Omega^\ZZ\times M$ is a compact $G$-invariant set with a dominated splitting $TM|_{\Lambda^G}=E\oplus_\succ F$.
Given $\ell\in\NN$ and $\lambda\in(0,1)$, there is $\delta=\delta(\ell,\lambda)>0$ such that for any point $[\underline\omega,x]\in\Lambda^G$, if 
$$\prod_{i=0}^{n-1}\|DG^{-\ell}|_{E(G^{-i\ell}([\underline\omega,x]))}\|\le\lambda^n,~~~\forall n\in\NN,$$
then $W^E_\delta([\underline\omega,x])$ is contained in the unstable manifold of $[\underline\omega,x]$; more precisely, there are constants $C=C(\ell,\lambda)>0$ and $\lambda_*=\lambda_*(\ell,\lambda)\in(0,1)$ such that for any $[\underline\omega,y],[\underline\omega,z]\in W^E_\delta([\underline\omega,x])$, one has that
$$d(G^{-n}([\underline\omega,y]),G^{-n}([\underline\omega,z]))\le C\lambda_*^n d([\underline\omega,y],[\underline\omega,z]).$$

\smallskip

Assume that $\mu$ is an ergodic measure of $G$ supported on $\Lambda^G$, and all Lyapunov exponents of $\mu$ along $E$ are positive. Then there is a positive $\mu$-measurable function $\delta([\underline\omega,x])$ for $\mu$-almost every point $[\underline\omega,x]$ such that $W^E_{\delta([\underline\omega,x])}([\underline\omega,x])$ is contained in the unstable manifold of $[\underline\omega,x]$.
\end{Lemma}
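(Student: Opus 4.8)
The statement splits into two parts: a uniform part (a quantitative estimate on how plaques $W^E_\delta([\underline\omega,x])$ contract under backward iteration, given the product condition along the $\ell$-orbit) and a measurable part (replacing the uniform hypothesis by positivity of all Lyapunov exponents along $E$, at the cost of a measurable size function $\delta([\underline\omega,x])$). The plan is to prove the uniform part by a graph-transform/cone-field argument along the plaques furnished by Theorem~\ref{Thm:plaque-family-extended}, and then deduce the measurable part from the uniform part via Pesin-theoretic block decomposition exactly as in the deterministic case.

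\textbf{Uniform part.} First I would invoke Theorem~\ref{Thm:plaque-family-extended} (after passing to $(1+\alpha)$-domination via Lemma~\ref{Lem:alpha-dominate}, if the extra regularity is wanted) to fix plaque families $\{W^E([\underline\omega,x])\}$ tangent to $E$, varying continuously with the base point and contained in the fibers $\{\underline\omega\}\times M$. Since the whole analysis happens inside a single fiber at a time — $G^{-n}$ maps $W^E([\underline\omega,x])$ into $W^E(G^{-n}([\underline\omega,x]))$, which lies in the fiber over $\sigma^{-n}\underline\omega$ — one is really doing non-autonomous hyperbolic dynamics along the sequence of diffeomorphisms $f_{\underline\omega}$, and the randomness plays no role beyond bookkeeping. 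The key step is: working in local charts adapted to the splitting $E\oplus_\succ F$, the domination gives an invariant cone field around $E$, and the hypothesis $\prod_{i=0}^{n-1}\|DG^{-\ell}|_{E(G^{-i\ell}([\underline\omega,x]))}\|\le\lambda^n$ upgrades this to genuine backward contraction: for $\delta=\delta(\ell,\lambda)$ small enough, $G^{-\ell}$ restricted to $W^E_\delta$ contracts distances by a definite factor, at least after grouping iterates in blocks of length $\ell$ and using that intermediate steps distort distances by a bounded amount. Standard telescoping over blocks then yields constants $C=C(\ell,\lambda)$ and $\lambda_*=\lambda_*(\ell,\lambda)\in(0,1)$ with $d(G^{-n}([\underline\omega,y]),G^{-n}([\underline\omega,z]))\le C\lambda_*^n d([\underline\omega,y],[\underline\omega,z])$ for all $[\underline\omega,y],[\underline\omega,z]\in W^E_\delta([\underline\omega,x])$; in particular $W^E_\delta([\underline\omega,x])$ consists of points with the same backward asymptotics as $[\underline\omega,x]$, i.e. lies in its unstable set. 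This is precisely the argument in \cite[Section 8]{ABC11}, transcribed fiberwise.

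\textbf{Measurable part.} Given an ergodic $G$-invariant measure $\mu$ on $\Lambda^G$ with all Lyapunov exponents along $E$ positive, say bounded below by some $\chi>0$, I would pick $\alpha\in(0,\chi)$ and note that by the Birkhoff/Oseledets theorem, for $\mu$-a.e. $[\underline\omega,x]$ one has $\frac1n\sum_{i=0}^{n-1}\log\|DG^{-1}|_{E(G^{-i}([\underline\omega,x]))}\|\to -(\text{sum of exponents along }E)<-\alpha$. Hence for $\mu$-a.e. point there is $\ell=\ell([\underline\omega,x])\in\NN$ and a further time after which the block products satisfy the hypothesis of the uniform part with $\lambda=e^{-\alpha\ell}$; equivalently, $\mu$-a.e. point eventually enters one of the Pesin blocks $\Lambda^G_\ell(E,\alpha)$ along its backward orbit, and one takes $\delta([\underline\omega,x])$ to be the uniform $\delta(\ell,\lambda)$ from the first part, possibly shrunk along the first finitely many iterates so that the whole plaque $W^E_{\delta([\underline\omega,x])}([\underline\omega,x])$ flows into $W^E_{\delta(\ell,\lambda)}$ of a block point — using the continuity of plaque families and the second (small-perturbation) property of plaque families from Definition~\ref{Def:plaque-family-extended}. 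Measurability of $[\underline\omega,x]\mapsto\delta([\underline\omega,x])$ follows since it is built from measurable objects (the entrance time into the blocks) and a countable family of uniform constants.

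\textbf{Main obstacle.} The genuinely substantive point is establishing the \emph{quantitative} backward contraction along the plaque from the \emph{scalar} product condition, rather than from a pointwise norm bound at every step: the hypothesis only controls the product over blocks of length $\ell$, so intermediate iterates may momentarily expand, and one must absorb this bounded intermediate distortion (using that $\Omega$ is compact and $\ell$ maps continuously into $\diff^2(M)$, so all derivatives and their inverses are uniformly bounded) while keeping the plaque radius $\delta$ uniform in the base point. Controlling the nonlinearity — that the estimate on $DG$ along $E$ at the base point actually governs the dynamics on the whole $\delta$-plaque — is handled by the invariant cone field from domination together with choosing $\delta$ small depending only on $(\ell,\lambda)$ and the uniform $C^1$(or $C^{1+\alpha}$) bounds, which is exactly why $\delta$ can be taken independent of $[\underline\omega,x]$. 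Everything else is a faithful fiberwise copy of the deterministic Pesin theory, so no new ideas beyond careful bookkeeping are needed there.
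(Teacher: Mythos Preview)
Your proposal is correct and matches the paper's own treatment: the paper does not give a proof but simply remarks that the argument is ``almost the same as in the deterministic case'' and refers to \cite[Section 8]{ABC11}, which is exactly the reference you invoke and the mechanism you describe (fiberwise transcription of the cone-field/plaque argument, with the measurable part following from the Pesin-block decomposition). Your write-up in fact supplies more detail than the paper does.
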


Note that as a consequence of Lemma~\ref{Lem:dominated-unstable-extended}, one has the following estimate on the size of unstable manifolds on a Pesin block. The proof is omitted.

\begin{Corollary}\label{Cor:uniform-size-extended}
Assume that $\Lambda^G$ is a compact $G$-invariant set with a dominated splitting $TM|_{\Lambda^G}=E\oplus_\succ F$. Given $\ell\in\NN$ and $\alpha>0$, there is $\delta=\delta(\ell,\alpha)>0$ such that $W^E_{\delta}([\underline\omega,x])$ is contained in the unstable manifold of $[\underline\omega,x]$ for any $[\underline\omega,x]\in\Lambda_{\ell}^G(E,\alpha)$.

\end{Corollary}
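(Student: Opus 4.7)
The plan is to reduce the statement directly to the first part of Lemma~\ref{Lem:dominated-unstable-extended}, by observing that the definition of the Pesin block $\Lambda^G_\ell(E,\alpha)$ is manufactured to be exactly the uniform backward contraction hypothesis that appears there. No additional dynamical input is needed; the corollary is just a uniform restatement, tailored for later use in controlling sizes of unstable disks on Pesin blocks.

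Concretely, I would fix $\ell\in\NN$ and $\alpha>0$ and set $\lambda := {\rm e}^{-\alpha \ell}$, which lies in $(0,1)$ because $\alpha>0$ and $\ell\ge 1$. By the very definition of $\Lambda^G_\ell(E,\alpha)$, every point $[\underline\omega,x]\in \Lambda^G_\ell(E,\alpha)$ satisfies
$$\prod_{i=0}^{n-1}\|DG^{-\ell}|_{E(G^{-i\ell}([\underline\omega,x]))}\|\le {\rm e}^{-\alpha \ell n}=\lambda^n,\qquad \forall n\in\NN.$$
This is precisely the hypothesis of the first statement of Lemma~\ref{Lem:dominated-unstable-extended} applied to the pair $(\ell,\lambda)$. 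I would invoke that lemma to produce a single constant $\delta=\delta(\ell,\lambda)>0$ depending only on $\ell$ and $\lambda$ (hence only on $\ell$ and $\alpha$), together with constants $C(\ell,\lambda)>0$ and $\lambda_\ast(\ell,\lambda)\in(0,1)$, such that $W^E_\delta([\underline\omega,x])$ is contained in the unstable manifold of $[\underline\omega,x]$ with uniform exponential contraction rate $C\lambda_\ast^n$ under backward iteration by $G$.

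Because this $\delta$ does not depend on the particular point chosen in the block, the uniform conclusion follows. The only subtle checks are that $\lambda\in(0,1)$, which is immediate, and that the plaques $W^E$ obtained from Theorem~\ref{Thm:plaque-family-extended} are exactly those on which Lemma~\ref{Lem:dominated-unstable-extended} operates to produce genuine unstable disks; this holds by construction. I do not anticipate any real obstacle here, since all of the work is already packaged into Lemma~\ref{Lem:dominated-unstable-extended} — the corollary simply records the fact that a uniform backward contraction rate along $E$ (the defining property of the Pesin block) yields a uniform size of the unstable plaque.
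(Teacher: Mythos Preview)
Your proposal is correct and matches the paper's intended approach exactly: the paper states the corollary as a direct consequence of Lemma~\ref{Lem:dominated-unstable-extended} and omits the proof, which is precisely the reduction you carry out by setting $\lambda={\rm e}^{-\alpha\ell}$ and noting that the defining inequality of $\Lambda^G_\ell(E,\alpha)$ is the hypothesis of that lemma.
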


\subsection{The local foliated chart}
\begin{Notation}
Given $\delta\in(0,1]$, denote by
$\DD^E(\delta)=\{x\in\RR^{\dim E},~\|x\|\le\delta\}$ and $\DD^E=\DD^E(1)$.
\end{Notation}
We give some criteria to show the absolutely continuous property of the conditional measures. 

\begin{Definition}\label{Def:foliated-chart}
Assume that $\Lambda^G$ is a compact $G$-invariant set with a dominated splitting $TM|_{\Lambda^G}=E\oplus_\succ F$, and $\Gamma$ is a compact metric space. 

\smallskip

A \emph{foliated chart} associated to a set $\Gamma$ is a map $\Phi:~\Gamma\times \DD^{E}\mapsto \Omega^\ZZ\times M$ such that 
\begin{enumerate}
\item For any $p\in\Gamma$, $\Phi$ induces a map $\Phi_p:~\DD^E\to \Omega^\ZZ\times M$. $\Phi_p$ is a diffeomorphism.

\item $\Phi_p(\DD^E)$ is contained in a plaque tangent to $E$. 

\item $\Phi_p$ is continuous w.r.t. $p$ in the $C^1$ topology.

\item The imagine of $\Phi_p$ and the imagine of $\Phi_q$ are pairwise disjoint for $p\neq q$.

\end{enumerate}

A foliated chart induces a measurable partition, and Lebesgue measures on each element of the measurable partition. The image the map $\Phi$ is also denoted by $\Phi$. For any $p\in\Gamma$, the image of the map $\Phi_p$ is also denoted by $\Phi_p$. The projection from $\Phi$ to $\Gamma$ is denoted by $\pi$. Note that $\pi$ is continuous.
\end{Definition}
For any Borel measure $\mu$, denote the quotient measure on $\Gamma$ by $\widehat\mu=\pi_*(\mu)$. A family of conditional measures $\{\mu_{p}\}_{p\in\Gamma}$ is defined for $\widehat\mu$-almost every $p\in\Gamma$.
See \cite[Section C.6]{BDV05} and \cite[Section 1]{Rok67} for more details.

The following Lemma~\ref{Lem:measure-disintegration} gives a criterion for the conditional measures that are absolutely continuous w.r.t. Lebesgue measures. One can see \cite[Proposition 7.3]{Via99} for the proof of Lemma~\ref{Lem:measure-disintegration}.

\begin{Lemma}\label{Lem:measure-disintegration}

For a measurable partition induced by a foliated chart $\Phi$ associated to $\Gamma$ and a Borel measure $\mu$ on $\Phi$, if there is $C>0$ such that for any open set $A\subset \DD^E$, one has the following properties:
\begin{itemize}

\item $\mu(A\times\xi)\le C\widehat\mu(\xi){\rm Leb}(A)$, for any open set $\xi\subset\Gamma$ with $\widehat\mu(\partial\xi)=0$,

\end{itemize}
then the conditional measures of $\mu$ associated to this foliated chart are absolutely continuous w.r.t. the Lebesgue measures and the densities are bounded by $C$.

\end{Lemma}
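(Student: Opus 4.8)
The plan is to derive this from the Rokhlin disintegration theorem together with a routine regularity argument; indeed the excerpt itself defers to \cite[Proposition 7.3]{Via99}, of which this lemma is a reformulation. First I would reduce to a purely measure-theoretic statement: since $\Gamma$ and $\DD^E$ are compact metric spaces and $\Phi$ is a continuous injection of the compact space $\Gamma\times\DD^E$, it is a homeomorphism onto its image, so I may work on $\Gamma\times\DD^E$ itself, regarding $\mu$ as a (finite) Borel measure there, $\pi$ as the first projection, $\widehat\mu=\pi_*\mu$, and the partition as $\{\{p\}\times\DD^E\}_{p\in\Gamma}$ with the fibrewise copy of Lebesgue measure ${\rm Leb}$. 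By the disintegration theorem (\cite{Rok67}, \cite[Section C.6]{BDV05}) there are probability measures $\{\mu_p\}$, defined for $\widehat\mu$-a.e.\ $p$, with $p\mapsto\mu_p(B)$ measurable and $\mu(A\times\xi)=\int_\xi\mu_p(A)\,d\widehat\mu(p)$. It then suffices to show that for $\widehat\mu$-a.e.\ $p$ one has $\mu_p(B)\le C\,{\rm Leb}(B)$ for every Borel $B\subset\DD^E$, for this forces $\mu_p\ll{\rm Leb}$ with density at most $C$.

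The core step is to fix an open set $A\subset\DD^E$ and show $\mu_p(A)\le C\,{\rm Leb}(A)$ for $\widehat\mu$-a.e.\ $p$. Writing $h_A(p)=\mu_p(A)$, a nonnegative integrable function on $\Gamma$, the hypothesis of the lemma says exactly that $\int_\xi h_A\,d\widehat\mu\le C\,{\rm Leb}(A)\,\widehat\mu(\xi)$ for every open $\xi$ with $\widehat\mu(\partial\xi)=0$. I would upgrade this averaged bound to the pointwise one by contradiction: if $\widehat\mu(\{h_A\ge C\,{\rm Leb}(A)+\varepsilon\})>0$ for some $\varepsilon>0$, pick by inner regularity a closed set $Z$ inside that sublevel set with $\widehat\mu(Z)>0$, and use the ``tube'' neighbourhoods $U_r=\{p:d(p,Z)<r\}$, which are open, decrease to $\overline Z=Z$ as $r\downarrow 0$, and satisfy $\widehat\mu(\partial U_r)=0$ for all but countably many $r$ (since $\partial U_r\subset\{d(\cdot,Z)=r\}$ and these sets are pairwise disjoint); choosing such an $r$ with $\widehat\mu(U_r\setminus Z)$ small and using $h_A\ge 0$ on $U_r\setminus Z$ yields
\[
0\ \ge\ \int_{U_r}(h_A-C\,{\rm Leb}(A))\,d\widehat\mu\ \ge\ \varepsilon\,\widehat\mu(Z)-C\,{\rm Leb}(A)\,\widehat\mu(U_r\setminus Z),
\]
which is impossible once $\widehat\mu(U_r\setminus Z)<\varepsilon\,\widehat\mu(Z)/(C\,{\rm Leb}(A)+1)$.

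Finally I would make the null set uniform in $A$: take a countable basis $\{A_n\}$ of the topology of $\DD^E$ closed under finite unions, apply the core step to each $A_n$, and intersect the resulting full-measure sets to get $P_0\subset\Gamma$ with $\mu_p(A_n)\le C\,{\rm Leb}(A_n)$ for all $n$ and all $p\in P_0$. Every open $U\subset\DD^E$ is an increasing union of such $A_n$'s, so monotone convergence gives $\mu_p(U)\le C\,{\rm Leb}(U)$ for all $p\in P_0$ and all open $U$, and then outer regularity of $\mu_p$ and ${\rm Leb}$ on $\DD^E$ extends this to all Borel sets, finishing the proof with density bound $C$. I expect the only real obstacle to be the middle paragraph: the hypothesis only controls integrals over the restricted class of open sets with $\widehat\mu$-negligible boundary, and converting that into a genuine pointwise (a.e.) inequality is precisely what needs the inner and outer regularity of $\widehat\mu$ together with the tube trick; the remaining steps are routine bookkeeping with the disintegration.
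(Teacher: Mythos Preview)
Your argument is correct. The paper itself gives no proof of this lemma and simply refers the reader to \cite[Proposition~7.3]{Via99}, so there is nothing to compare your approach against beyond that citation; what you have written is a clean self-contained version of the standard argument behind that proposition. The reduction to the product $\Gamma\times\DD^E$ via the compactness/injectivity of $\Phi$ is legitimate (Definition~\ref{Def:foliated-chart} guarantees both), the Rokhlin disintegration applies since everything is standard Borel, and your ``tube'' argument correctly upgrades the averaged inequality $\int_\xi h_A\,d\widehat\mu\le C\,{\rm Leb}(A)\,\widehat\mu(\xi)$ (valid only on open $\xi$ with $\widehat\mu(\partial\xi)=0$) to the pointwise bound $h_A\le C\,{\rm Leb}(A)$ $\widehat\mu$-a.e.; the key point that all but countably many $r$ give $\widehat\mu(\partial U_r)=0$ is exactly what makes the restricted hypothesis sufficient. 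The passage from a countable basis closed under finite unions to all open sets and then to all Borel sets by outer regularity is routine and correct.
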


\subsection{Gibbs $E$-states for the extended dynamical system}
With the unstable manifold for almost every points, one can define the Gibbs $E$-states for the extended dynamical system $G$. Using Lemma~\ref{Lem:dominated-unstable-extended}, one can define a measurable partition $\mu$-subordinate to $W^{E,u}$.
\begin{Definition}\label{Def:subordinate-extended}

Assume that $\Lambda^G\subset\Omega^\ZZ\times M$ is a compact $G$-invariant set with a dominated splitting $TM|_{\Lambda^G}=E\oplus_\succ F$. Assume that $\mu$ is a $G$-invariant measure satisfying the Lyapunov exponents along $E$ of $\mu$-almost every point are all positive. A measurable partition $\xi$ is said to be \emph{$\mu$-subordinate to $W^{E,u}$} if for $\mu$-almost every point $[\underline\omega,x]$, $\xi([\underline\omega,x])$ is an open set contained in $W^{E}_{\delta([\underline\omega,x])}([\underline\omega,x])$, where $\delta$ is the measurable function as in Lemma~\ref{Lem:dominated-unstable-extended}.

\bigskip

A $G$-invariant (not necessarily ergodic) measure $\mu$ supported on $\Lambda$ is a \emph{Gibbs $E$-state} if 
\begin{enumerate}
\item
For $\mu$-almost every point, its Lyapunov exponents along $E$ are all positive.

\item the conditional measures of $\mu$  are absolutely continuous  with respect to Lebesgue measures for any measurable partition $\mu$-subordinate to $W^{E,u}$.
\end{enumerate}

\bigskip

When $E$ is uniformly expanded\footnote{We say that $E$ is uniformly expanded on $\Lambda^G$, if there are constants $C>0$ and $\lambda\in(0,1)$ such that for any $[\underline\omega,x]\in\Lambda^G$ and any $n\in\NN$ such that $\|DG^{-n}|_{E([\underline\omega,x])}\|\le C\lambda^n$.} by $DG$, a Gibbs $E$-state is also called a \emph{Gibbs $u$-state} (as in the deterministic case).

\end{Definition}

One has the following result, whose proof is direct and omitted.

\begin{Lemma}\label{Lem:correspondence}
Let $G$ be the extended dynamical system generated by $(\Omega,\ell)$. Assume that there is $\omega_f\in\Omega$ such that $\ell(\omega_f)=f$. Assume that $\Lambda$ is a compact invariant set of $f$ with a dominated splitting $TM|_\Lambda=E\oplus F$ and $\mu$ is an invariant measure supported on $\Lambda$. Then $\mu$ is a Gibbs $E$-state if and only if $\mu^G$ is a Gibbs $E$-state for $G$.

\end{Lemma}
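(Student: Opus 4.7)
The plan is to exploit the fact that when $\nu$ is supported on a single point $\omega_f$ with $\ell(\omega_f)=f$, the lifted measure $\mu^G$ is essentially a copy of $\mu$ on the slice $\{\underline{\omega_f}\}\times M$, and on this slice the extended dynamics $G$ is conjugate to $f$. From here every ingredient in the definition of Gibbs $E$-state transfers directly.

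First I would identify $\mu^G$ explicitly. Writing $\underline{\omega_f}=(\dots,\omega_f,\dot\omega_f,\omega_f,\dots)$, the probability $\delta_{\underline{\omega_f}}\times\mu$ is clearly $G$-invariant (because $G([\underline{\omega_f},x])=[\underline{\omega_f},f(x)]$) and its projection under $\PP_+$ equals $\delta_{\omega_f}^{\NN\cup\{0\}}\times\mu$. By the uniqueness part of Lemma~\ref{Lem:invariant-measure-G}, $\mu^G=\delta_{\underline{\omega_f}}\times\mu$. Via $\PP_M$ the dynamical systems $(G,\mu^G)$ and $(f,\mu)$ are measurably conjugate, and by Lemma~\ref{Lem:extend-bundle} the Lyapunov spectrum along $E$ coincides. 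Hence positivity of Lyapunov exponents along $E$ for $\mu$ is equivalent to the same property for $\mu^G$, which takes care of condition (1) in Definition~\ref{Def:subordinate-extended} / Definition~\ref{Def:Gibbscu}.

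Next I would compare plaque families and unstable manifolds. The plaque family $\{W^E(x)\}_{x\in\Lambda}$ of Theorem~\ref{Thm:plaque-family} and the plaque family $\{W^E([\underline\omega,x])\}$ of Theorem~\ref{Thm:plaque-family-extended} restrict, on $\{\underline{\omega_f}\}\times\Lambda$, to plaques of the form $\{\underline{\omega_f}\}\times W^E(x)$: indeed $W^E([\underline{\omega_f},x])\subset\{\underline{\omega_f}\}\times M$ by definition, is tangent to $E(x)$, and satisfies the graph-transform invariance under $G=f$ on this slice, so it can be taken equal to $\{\underline{\omega_f}\}\times W^E(x)$ (both plaque families are obtained by the same fixed-point argument, so one can choose them consistently; any two choices differ by a measurable change that preserves absolute continuity of conditionals). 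By Lemma~\ref{Lem:dominated-unstable} and Lemma~\ref{Lem:dominated-unstable-extended} with the same constants $\ell,\lambda$, the $\mu$-measurable function $\delta(x)$ and the $\mu^G$-measurable function $\delta([\underline{\omega_f},x])$ may be taken equal, so unstable manifolds correspond under $\PP_M$.

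It then remains to match measurable partitions subordinate to $W^{E,u}$ and their conditional measures. Given a partition $\xi$ of $\Lambda$ that is $\mu$-subordinate to $W^{E,u}$, define $\xi^G$ on $\Omega^\ZZ\times\Lambda$ by $\xi^G([\underline\omega,x])=\{\underline\omega\}\times\xi(x)$; this is measurable and, on the support of $\mu^G$, gives elements $\{\underline{\omega_f}\}\times\xi(x)\subset W^E_{\delta([\underline{\omega_f},x])}([\underline{\omega_f},x])$, so it is $\mu^G$-subordinate. Conversely, any $\mu^G$-subordinate $\xi^G$ descends via $\PP_M$ to a $\mu$-subordinate partition of $\Lambda$. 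Because $\mu^G$ is the product $\delta_{\underline{\omega_f}}\times\mu$, the Rokhlin disintegration gives conditional measures $\mu^G_{\xi^G([\underline{\omega_f},x])}=\delta_{\underline{\omega_f}}\times\mu_{\xi(x)}$, while the induced Lebesgue measure on the plaque $\{\underline{\omega_f}\}\times W^E(x)$ is the push-forward of the Lebesgue measure on $W^E(x)$ under the identification. Thus $\mu_{\xi(x)}$ is absolutely continuous with respect to Lebesgue on $W^E(x)$ if and only if $\mu^G_{\xi^G([\underline{\omega_f},x])}$ is absolutely continuous with respect to Lebesgue on $W^E([\underline{\omega_f},x])$, proving the equivalence. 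The only point requiring a little care, which I expect to be the mildest obstacle, is the consistent choice of plaque families on the two sides so that the conditional and Lebesgue measures can be literally identified; as noted, this can be arranged directly or handled by invoking Lemma~\ref{Lem:measure-disintegration} to reduce absolute continuity to a Fubini-type bound that is manifestly preserved by the product structure of $\mu^G$.
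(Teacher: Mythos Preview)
Your proposal is correct and is precisely the ``direct'' argument the paper has in mind: the paper actually omits the proof entirely, stating only that it is direct. Your identification $\mu^G=\delta_{\underline{\omega_f}}\times\mu$ via Lemma~\ref{Lem:invariant-measure-G}, the transfer of Lyapunov exponents via Lemma~\ref{Lem:extend-bundle}, and the bijection between $\mu$-subordinate and $\mu^G$-subordinate partitions (with matching Rokhlin conditionals under the product structure) constitute exactly the routine verification the authors are suppressing.
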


Recall that
$$\Lambda^G_\ell(E,\alpha)=\{[\underline\omega,x]\in\Lambda^G:\prod_{i=0}^{n-1}\|DG^{-\ell}|_{E(G^{-i \ell}([\underline\omega,x]))}\|\le {\rm e}^{-\alpha \ell n},~\forall n\in\NN\}.$$
The main result in this Section is:
\begin{Theorem}\label{Thm:criterion-gibbs-E-extended}
Assume that $\eta$  is a $G$-invariant measure and is supported on a compact invariant set $\Lambda^G\subset \Omega^\ZZ\times M$ with a dominated splitting $TM|_{\Lambda^G}=E\oplus_\succ F$. Assume that $\{\eta_n\}$ is a sequence of ergodic Gibbs $E$-states with the following properties:
\begin{itemize}
\item $\lim_{n\to\infty}\eta_n=\eta$.
\item There is a constant $\alpha>0$ such that for any $n\in\NN$, the Lyapunov exponents of $\eta_n$ along $E$ are larger than $\alpha>0$.
\item For any $\varepsilon>0$, there is $\ell\in\NN$ such that for any $n$ large enough, one has
$$\eta_n(\Lambda_\ell^G(E,\alpha))\ge 1-\varepsilon.$$
\end{itemize}
Then $\eta$ is a Gibbs $E$-state. 

\end{Theorem}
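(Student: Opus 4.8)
The statement splits into two claims: that $\eta$-almost every point has all its Lyapunov exponents along $E$ positive, and that the conditional measures of $\eta$ along every measurable partition subordinate to $W^{E,u}$ are absolutely continuous with respect to Lebesgue. The first is quick. Since the dominated splitting $E\oplus_\succ F$ is continuous and $[\underline\omega,x]\mapsto\|DG^{-\ell}|_{E([\underline\omega,x])}\|$ is continuous on $\Lambda^G$, each set $\Lambda^G_\ell(E,\alpha)$ is closed; hence for $\ell=\ell(\varepsilon)$ as in the third hypothesis, $\eta(\Lambda^G_\ell(E,\alpha))\ge\limsup_n\eta_n(\Lambda^G_\ell(E,\alpha))\ge 1-\varepsilon$, and letting $\varepsilon\to 0$ gives $\eta\big(\bigcup_\ell\Lambda^G_\ell(E,\alpha)\big)=1$. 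If $[\underline\omega,x]\in\Lambda^G_\ell(E,\alpha)$, then by submultiplicativity $\|DG^{-\ell m}|_{E([\underline\omega,x])}\|\le\prod_{i=0}^{m-1}\|DG^{-\ell}|_{E(G^{-i\ell}([\underline\omega,x]))}\|\le {\rm e}^{-\alpha\ell m}$ for all $m$, so $\tfrac1N\log\|DG^{-N}|_E\|\le-\alpha$ along $N=\ell m$; by Oseledets' theorem this quantity converges, for $\eta$-a.e.\ point, to minus the smallest Lyapunov exponent of $DG|_E$ there, which is therefore $\ge\alpha>0$.

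The heart of the matter is the absolute continuity, which one would establish by verifying the criterion of Lemma~\ref{Lem:measure-disintegration} on foliated charts adapted to the Pesin blocks, and then passing to the weak limit $\eta_n\to\eta$. By Lemma~\ref{Lem:alpha-dominate} the splitting is $(1+\beta)$-dominated for some $\beta>0$, so by Theorem~\ref{Thm:plaque-family-extended} the plaques $W^E$ are $C^{1+\beta}$ and vary continuously in that topology, and $\log{\rm Jac}(DG|_E)$ is $\beta$-Hölder along them. Fixing $\ell$, Corollary~\ref{Cor:uniform-size-extended} and Lemma~\ref{Lem:dominated-unstable-extended} furnish $\delta=\delta(\ell,\alpha)>0$ and constants $C_0,\lambda_*$ depending only on $(\ell,\alpha)$ such that, for $[\underline\omega,x]\in\Lambda^G_\ell(E,\alpha)$, the plaque $W^E_\delta([\underline\omega,x])$ lies in the unstable manifold and its backward iterates contract at rate $C_0\lambda_*^m$. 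With $r\le\delta/4$ one covers a full $\eta$-measure part of $\bigcup_\ell\Lambda^G_\ell(E,\alpha)$ by countably many foliated charts $\Phi:\Gamma\times\DD^E(r)\to\Omega^\ZZ\times M$ whose plaques are sub-discs of these $W^E_\delta$'s, chosen generically so the boundaries of the sets entering the estimates are $\eta$-null, and sets $\Gamma'=\{p\in\Gamma:\Phi_p(\DD^E(r))\cap\Lambda^G_\ell(E,\alpha)\neq\emptyset\}$, which is closed. For $p\in\Gamma'$ the plaque $\Phi_p(\DD^E(r))$ is contained in some $W^E_\delta$ over a block point, hence is a genuine piece of unstable manifold enjoying the uniform backward contraction above; and (using $r\le\delta/4$) the partition induced over $\Gamma'$ is subordinate to $W^{E,u}$ with respect to each $\eta_n$ and to $\eta$.

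Since each $\eta_n$ is a Gibbs $E$-state, its conditionals along these plaques are absolutely continuous, and the classical bounded-distortion estimate — the conditional density at $z$ relative to $p$ being, up to a normalising constant, $\prod_{j\ge 1}{\rm Jac}(DG|_E)(G^{-j}p)/{\rm Jac}(DG|_E)(G^{-j}z)$, whose logarithm is summable by the $\beta$-Hölder bound and the geometric decay of the backward distances on the block — bounds these densities by a constant $C_1=C_1(\ell,\alpha)$ independent of $n$. This gives, for open $A\subset\DD^E(r)$ and open $\xi\subset\Gamma$, the inequality $\eta_n\big(\Phi(A\times\xi)\cap\pi^{-1}(\Gamma')\big)\le C_1\,\widehat{\eta_n}(\xi\cap\Gamma')\,{\rm Leb}(A)$. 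Because $\pi^{-1}(\Gamma')$ is closed with $\eta$-null boundary, $\eta_n|_{\pi^{-1}(\Gamma')}$ and its projection converge weakly to the corresponding objects for $\eta$, so the same inequality holds with $\eta$ in place of $\eta_n$, and Lemma~\ref{Lem:measure-disintegration} applied to $\eta|_{\pi^{-1}(\Gamma')}$ shows that the conditionals of $\eta$ along the plaques lying over $\Gamma'$ are absolutely continuous. As $\eta(\Lambda^G_\ell(E,\alpha))\ge 1-\varepsilon$ with $\varepsilon\to 0$, for $\eta$-a.e.\ $y$ there are an $\ell$ and a chart of the corresponding scale containing $y$ whose plaque through $y$ meets $\Lambda^G_\ell(E,\alpha)$, so the conditional of $\eta$ along that plaque is absolutely continuous; given any measurable partition $\zeta$ subordinate to $W^{E,u}$, refining it almost everywhere by such a plaque-partition keeps it subordinate and makes its conditionals absolutely continuous (they are conditionals of absolutely continuous plaque-conditionals), and the conditionals of $\eta$ along $\zeta$, being averages of these, are absolutely continuous. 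Together with the positivity of the exponents, $\eta$ is a Gibbs $E$-state.

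The main obstacle is exactly this passage from a local, block-by-block statement to a global one: the distortion constant $C_1(\ell,\alpha)$ deteriorates as $\ell\to\infty$, so there is no single absolutely continuous bound valid for $\eta$, and one must exhaust $\eta$ by the Pesin blocks $\Lambda^G_\ell(E,\alpha)$ — which is precisely why the uniform hypothesis $\eta_n(\Lambda^G_\ell(E,\alpha))\ge 1-\varepsilon$ is indispensable, since it is what forces both $\eta$ itself and the plaque structures of the $\eta_n$ to concentrate on the blocks — and then transfer absolute continuity plaque by plaque through the weak limit, discarding in each chart the plaques that miss the block. Arranging the charts and the restricted measures so that the weak limit can be taken cleanly, while keeping the discarded portion genuinely controlled, is where the real care is needed.
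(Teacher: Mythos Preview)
Your proposal is correct and follows essentially the same approach as the paper: establish $\eta(\bigcup_\ell\Lambda^G_\ell(E,\alpha))=1$ from closedness of the blocks, obtain a uniform density bound $C_1(\ell,\alpha)$ on the plaque-conditionals of each $\eta_n$ over foliated charts adapted to $\Lambda^G_\ell(E,\alpha)$ via the Hölder distortion estimate, pass this bound through the weak limit using Lemma~\ref{Lem:measure-disintegration}, and then exhaust by $\ell$. The only organizational differences are that the paper builds its charts (Lemma~\ref{Lem:contruct-foliated-box}) so that every plaque already passes through a block point---eliminating your auxiliary set $\Gamma'$---and packages the density bound and the local-to-global step into separate lemmas (Lemma~\ref{Lem:bound-density}, invoking the Liu--Qian density formula, and Lemma~\ref{Lem:criterion-disintegration-extended}) rather than arguing them inline as you do.
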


As a direct application of Theorem~\ref{Thm:criterion-gibbs-E-extended} in the uniform case, one has the following corollary:

\begin{Corollary}\label{Cor:gibbs-u-compact}
Assume that $\eta$  is a $G$-invariant measure and is supported on a compact invariant set $\Lambda^G\subset \Omega^\ZZ\times M$ with a dominated splitting $TM|_{\Lambda^G}=E^{uu}\oplus_\succ F$, where $E^{uu}$ is uniformly expanded by $DG$. If $\{\eta_n\}$ is a sequence of Gibbs $u$-states of $G$ and $\lim_{n\to\infty}\eta_n=\eta$, then $\eta$ is a Gibbs $u$-state.

\end{Corollary}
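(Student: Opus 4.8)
The plan is to obtain this as an immediate corollary of Theorem~\ref{Thm:criterion-gibbs-E-extended}, the point being that uniform expansion of $E^{uu}$ trivializes all three of its quantitative hypotheses. Let $C>0$ and $\lambda\in(0,1)$ be the constants witnessing the uniform expansion of $E^{uu}$ on $\Lambda^G$, and fix once and for all a number $\alpha$ with $0<\alpha<-\log\lambda$.

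First I would record two elementary consequences of uniform expansion. On the one hand, for every $G$-invariant measure supported on $\Lambda^G$ — in particular for each $\eta_n$ and for $\eta$ — every Lyapunov exponent along $E^{uu}$ is at least $-\log\lambda$: applying $\|DG^{-n}|_{E^{uu}(G^n([\underline\omega,x]))}\|\le C\lambda^n$ to $w=DG^{n}v$ with $v\in E^{uu}([\underline\omega,x])$ gives $\|v\|\le C\lambda^{n}\|DG^{n}v\|$, hence $\tfrac1n\log\|DG^{n}v\|\to\chi\ge-\log\lambda>\alpha$. In particular the exponents along $E^{uu}$ are positive, so a Gibbs $u$-state of $G$ is the same thing as a Gibbs $E^{uu}$-state in the sense of Definition~\ref{Def:subordinate-extended}, and each $\eta_n$ is a Gibbs $E^{uu}$-state whose exponents along $E^{uu}$ are bounded below by $\alpha$. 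On the other hand, for any $\ell\in\NN$ and any $[\underline\omega,x]\in\Lambda^G$ each factor satisfies $\|DG^{-\ell}|_{E^{uu}(G^{-i\ell}([\underline\omega,x]))}\|\le C\lambda^\ell$, so
$$\prod_{i=0}^{n-1}\|DG^{-\ell}|_{E^{uu}(G^{-i \ell}([\underline\omega,x]))}\|\le (C\lambda^{\ell})^{n}.$$
Since $\alpha<-\log\lambda$ we have $-\alpha-\log\lambda>0$, so we may choose $\ell_0\in\NN$ large enough that $C\lambda^{\ell_0}\le{\rm e}^{-\alpha\ell_0}$; then for every $\ell\ge\ell_0$ one has $\Lambda^G_\ell(E^{uu},\alpha)=\Lambda^G$.

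With these two observations in hand the three bullet hypotheses of Theorem~\ref{Thm:criterion-gibbs-E-extended} all hold for the sequence $\{\eta_n\}$ and the constant $\alpha$: the convergence $\eta_n\to\eta$ is given; the Lyapunov exponents of $\eta_n$ along $E^{uu}$ exceed $\alpha$ by the first observation; and for any $\varepsilon>0$ we may take $\ell=\ell_0$, for which $\eta_n(\Lambda^G_{\ell_0}(E^{uu},\alpha))=\eta_n(\Lambda^G)=1\ge1-\varepsilon$. Theorem~\ref{Thm:criterion-gibbs-E-extended} then gives that $\eta$ is a Gibbs $E^{uu}$-state, i.e.\ a Gibbs $u$-state, which is the claim. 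The only point requiring a little care — and the only place I anticipate any friction — is that Theorem~\ref{Thm:criterion-gibbs-E-extended} is phrased for sequences of \emph{ergodic} Gibbs $E$-states, whereas the $\eta_n$ here need not be ergodic. This is harmless: one may replace each $\eta_n$ by a weak-$*$ convergent selection of its ergodic components, which are themselves Gibbs $u$-states (the extended-system analogues of Proposition~\ref{Pro:property-u-state} and Corollary~\ref{Cor:ergodic-in-i}), and run a diagonal argument; alternatively one notes that the proof of Theorem~\ref{Thm:criterion-gibbs-E-extended} runs through the affine absolute-continuity criterion of Lemma~\ref{Lem:measure-disintegration} and, once the relevant Pesin block is the whole space, does not in fact use ergodicity of the approximating measures. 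Beyond this bookkeeping there is no genuine obstacle: all the analytic content is already inside Theorem~\ref{Thm:criterion-gibbs-E-extended}, and uniform hyperbolicity is exactly what reduces its hypotheses to triviality.
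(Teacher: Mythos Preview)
Your proposal is correct and follows essentially the same approach as the paper: both observe that uniform expansion yields a uniform $\alpha>0$ bounding the $E^{uu}$-exponents from below and an $\ell$ for which $\Lambda^G_\ell(E^{uu},\alpha)=\Lambda^G$, then invoke Theorem~\ref{Thm:criterion-gibbs-E-extended}. You are in fact more careful than the paper, which does not comment on the ergodicity hypothesis in Theorem~\ref{Thm:criterion-gibbs-E-extended}; your remark that this hypothesis is inessential once the Pesin block is the whole space (or can be bypassed via ergodic components) is well taken.
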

\begin{proof}
When $E^{uu}$ is uniformly expanded, then it is clear that there is $\alpha>0$ such that the Lyapunov exponents along $E^{uu}$ of any ergodic measure are larger than $\alpha$. Moreover, there is $\ell\in\NN$ such that $\Lambda^G=\Lambda_\ell^G(E^{uu},\alpha)$.
\end{proof}

Another consequence of Theorem~\ref{Thm:criterion-gibbs-E-extended} is the following deterministic version.

\begin{Corollary}\label{Cor:criterion-gibbs-E}
Assume that $f$ is a $C^2$ diffeomorphism, $\mu$  is an $f$-invariant measure and is supported on a compact invariant set $\Lambda\subset M$ with a dominated splitting $TM|_{\Lambda}=E\oplus_\succ F$. Assume that $\{\mu_n\}$ is a sequence of ergodic Gibbs $E$-states with the following properties:
\begin{itemize}
\item $\lim_{n\to\infty}\mu_n=\mu$.
\item There is a constant $\alpha>0$ such that for any $n\in\NN$, the Lyapunov exponents of $\mu_n$ along $E$ are larger than $\alpha>0$.
\item For any $\varepsilon>0$, there is $\ell\in\NN$ such that for any $n$ large enough, one has
$$\mu_n(\Lambda_\ell(E,\alpha))\ge 1-\varepsilon.$$
\end{itemize}
Then $\mu$ is a Gibbs $E$-state.

\end{Corollary}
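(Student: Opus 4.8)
The plan is to deduce Corollary~\ref{Cor:criterion-gibbs-E} directly from its extended-dynamical-systems version, Theorem~\ref{Thm:criterion-gibbs-E-extended}, by realising the deterministic system $(M,f)$ as an extended dynamical system over a trivial base. Concretely, first I would take $\Omega=\{\omega_f\}$ a single point and $\ell:\Omega\to{\rm Diff}^2(M)$ the map with $\ell(\omega_f)=f$; the associated extended dynamical system $G$ then acts on $\Omega^\ZZ\times M$, which is canonically identified with $M$, and under this identification $G=f$. Equivalently, one may embed $f$ into any extended dynamical system with $\ell(\omega_f)=f$, as in the proof of Corollary~\ref{Cor:one-diff-pesin-block}, and restrict attention to the fibre $\{\omega_f\}^\ZZ\times M$; in that case I would invoke Corollary~\ref{Cor:lifted-limit} to see that $\mu_n^G\to\mu^G$.

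Next I would check that each hypothesis of Corollary~\ref{Cor:criterion-gibbs-E} translates into the corresponding hypothesis of Theorem~\ref{Thm:criterion-gibbs-E-extended} for the lifted data. By Lemma~\ref{Lem:extend-bundle}, the dominated splitting $TM|_\Lambda=E\oplus_\succ F$ lifts to a dominated splitting of $TM|_{\Lambda^G}$ of the same type, and the Lyapunov exponents of $\mu_n^G$ and $\mu^G$ along $E$ agree with those of $\mu_n$ and $\mu$; in particular they remain $>\alpha$. By Lemma~\ref{Lem:correspondence}, each $\mu_n$ being an ergodic Gibbs $E$-state is equivalent to $\mu_n^G$ being an ergodic Gibbs $E$-state for $G$, and $\mu_n\to\mu$ lifts to $\mu_n^G\to\mu^G$. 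Finally, since $\Lambda^G_\ell(E,\alpha)$ is defined fibrewise and $G$ preserves the fibre $\{\omega_f\}^\ZZ\times M$, under the identification above $\Lambda^G_\ell(E,\alpha)$ coincides with $\Lambda_\ell(E,\alpha)$, so the Pesin-block estimate $\mu_n(\Lambda_\ell(E,\alpha))\ge 1-\varepsilon$ is exactly $\mu_n^G(\Lambda^G_\ell(E,\alpha))\ge 1-\varepsilon$.

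With all hypotheses verified, Theorem~\ref{Thm:criterion-gibbs-E-extended} gives that $\mu^G$ is a Gibbs $E$-state for $G$, and then Lemma~\ref{Lem:correspondence} gives that $\mu$ is a Gibbs $E$-state for $f$, which is the desired conclusion.

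There is no serious obstacle at the level of this corollary: the argument is a bookkeeping reduction to the extended setting, using that the one-point base case of the random framework is literally the deterministic case. The real content is hidden in Theorem~\ref{Thm:criterion-gibbs-E-extended} itself, whose proof must show that absolute continuity of the disintegrations along unstable plaques survives passage to the limit; the delicate point there — which the corollary simply inherits — is that the sizes of the unstable plaques must not degenerate, which is precisely why the uniform Pesin-block hypothesis $\mu_n(\Lambda_\ell(E,\alpha))\ge 1-\varepsilon$ (supplied in applications by Proposition~\ref{Pro:uniform-pesin-block} and Corollary~\ref{Cor:one-diff-pesin-block}) is indispensable, together with Corollary~\ref{Cor:uniform-size-extended}, which provides a uniform lower bound on plaque sizes over $\Lambda_\ell(E,\alpha)$.
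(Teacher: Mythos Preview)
Your proposal is correct and follows essentially the same route as the paper: lift everything to the extended dynamical system via $\eta_n=\mu_n^G$, $\eta=\mu^G$, invoke Corollary~\ref{Cor:lifted-limit}, Lemma~\ref{Lem:extend-bundle}, and Lemma~\ref{Lem:correspondence} to transfer the hypotheses, apply Theorem~\ref{Thm:criterion-gibbs-E-extended}, and use Lemma~\ref{Lem:correspondence} once more to descend. The only cosmetic difference is that the paper works with a general $\Omega$ containing $\omega_f$ and uses the inclusion $\Lambda_\ell^G(E,\alpha)\supset\{\omega_f\}^\ZZ\times\Lambda_\ell(E,\alpha)$ rather than the identification you get from taking $\Omega$ to be a point, but this changes nothing of substance.
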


\begin{proof}
Let $G$ be the extended dynamical system generated by $(\Omega,\ell)$. Assume that there is $\omega_f\in\Omega$ such that $\ell(\omega_f)=f$. Take $\eta_n=\mu_n^G$ for $n\in\NN$ and $\eta=\mu^G$. By Corollary~\ref{Cor:lifted-limit}, one has that $\lim_{n\to\infty}\eta_n=\eta$. By Lemma~\ref{Lem:extend-bundle}, one has that
\begin{itemize}
\item for any $n\in\NN$, $\eta_n$ and $\mu_n$ has same Lyapunov exponents. Hence the Lyapunov exponents of $\eta_n$ along $E$ are all larger than $\alpha$.

\end{itemize}
By Lemma~\ref{Lem:correspondence}, for any $n\in\NN$, $\eta_n$ is a Gibbs $E$-state for $G$ since $\mu_n$ is a Gibbs $E$-state for $f$. Since $\Lambda_\ell^G(E,\alpha)\supset\{\omega_f\}^\ZZ\times \Lambda_\ell(E,\alpha)$, one has that 
for any $\varepsilon>0$, there is $\ell\in\NN$ such that for any $n$ large enough, one has
$$\eta_n(\Lambda_\ell^G(E,\alpha))\ge 1-\varepsilon.$$

By Theorem~\ref{Thm:criterion-gibbs-E-extended}, $\eta=\mu^G$ is a Gibbs $E$-state. By applying Lemma~\ref{Lem:correspondence} again, one has that $\mu$ is a Gibbs $E$-state for $f$.
\end{proof}

We need the following result from Liu and Qian \cite[Chapter VI:~Proposition 2.2 and Corollary 8.1]{LiQ95}. We restate it as the following form.

\begin{Theorem}\label{Thm:liu-qian-density}
Assume that $\Lambda^G$ be a compact $G$-invariant set with a dominated splitting $TM|_{\Lambda^G}=E\oplus_\succ F$.
Let $\eta$ be a Gibbs $E$-state supported on $\Lambda^G$. Denote by
$$J^E([\underline\omega,x])=|{\rm Det}DG|_{E([\underline\omega,x])}|,~~~\forall [\underline\omega,x]\in\Lambda^G.$$

Then there exists the measurable partition $\xi$ that is $\eta$-{subordinate} to $W^{E,u}$. Moreover, for any such measurable partition $\xi$, for $\widehat\mu$-almost every $\xi([\underline\omega,x])$, one has 
$$
\frac{\rho([\underline{\omega},y])}{\rho([\underline{\omega},z])}=\prod_{j=1}^{+\infty}\frac{J^E(G^{-j}([\underline{\omega},z]))}{J^E(G^{-j}([\underline{\omega},y]))}, ~~\mu_{\xi([\underline\omega,x])}-\text{almost every}~ [\underline\omega,y],[\underline\omega,z] \in \xi([\underline{\omega},x]),
$$
where $\rho$ be the density of $\mu_\xi$ with respect to the Lebesgue measure on $\xi$.
\end{Theorem}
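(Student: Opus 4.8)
The plan is to prove the two assertions in turn: the existence of an $\eta$-subordinate measurable partition, and then the density formula, which I will reduce to the convergence of an infinite Jacobian product together with control of a remainder term.

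For existence I would use the classical Ledrappier--Strelcyn construction. By Lemma~\ref{Lem:dominated-unstable-extended} almost every point carries a local unstable manifold tangent to $E$, and on a Pesin block $\Lambda^G_\ell(E,\alpha)$ of positive $\eta$-measure these have uniform size by Corollary~\ref{Cor:uniform-size-extended}. Fixing a density point of such a block, I would take a small foliated neighborhood whose $E$-plaques are pieces of these unstable manifolds and let $\xi$ be the Rokhlin partition of that neighborhood into plaques (together with the trivial element outside); this $\xi$ is $\eta$-subordinate to $W^{E,u}$. Because the asserted identity is purely leafwise and the conditional measures of $\eta$ along the unstable plaques do not depend on the particular partition realizing them, it suffices to verify the formula for one such $\xi$; it then transfers to every $\eta$-subordinate partition.

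Write $J^E = |{\rm Det}\,DG|_E|$. The first step of the formula is a one-step cocycle relation. From the change-of-variables formula ${\rm Leb}_{W^E(G([\underline\omega,x]))}(G(A)) = \int_A J^E\,d{\rm Leb}_{W^E([\underline\omega,x])}$, the $G$-invariance of $\eta$, and the absolute continuity of the conditionals (item (2) of Definition~\ref{Def:subordinate-extended}), I would compare the disintegration of $\eta$ over $\xi$ with its disintegration over the coarser partition $G\xi$ to get, for a.e. plaque and a.e. $[\underline\omega,y],[\underline\omega,z]$ in it,
\[
\frac{\rho([\underline\omega,y])}{\rho([\underline\omega,z])} = \frac{\rho(G^{-1}([\underline\omega,y]))}{\rho(G^{-1}([\underline\omega,z]))}\cdot\frac{J^E(G^{-1}([\underline\omega,z]))}{J^E(G^{-1}([\underline\omega,y]))}.
\]
Iterating $n$ times yields
\[
\frac{\rho([\underline\omega,y])}{\rho([\underline\omega,z])} = \frac{\rho(G^{-n}([\underline\omega,y]))}{\rho(G^{-n}([\underline\omega,z]))}\cdot\prod_{j=1}^{n}\frac{J^E(G^{-j}([\underline\omega,z]))}{J^E(G^{-j}([\underline\omega,y]))},
\]
so the formula follows once the infinite product converges and the first factor on the right tends to $1$.

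Convergence of the product is routine: by Lemma~\ref{Lem:alpha-dominate} the splitting is $(1+\alpha_0)$-dominated for some $\alpha_0>0$, so by Theorem~\ref{Thm:plaque-family-extended} the plaques are $C^{1+\alpha_0}$ and $\log J^E$ is Hölder along unstable plaques; combined with the backward contraction $d(G^{-j}([\underline\omega,y]),G^{-j}([\underline\omega,z])) \le C\lambda_*^{\,j}\,d([\underline\omega,y],[\underline\omega,z])$ from Lemma~\ref{Lem:dominated-unstable-extended}, this makes $\sum_{j\ge1}\bigl|\log J^E(G^{-j}([\underline\omega,y]))-\log J^E(G^{-j}([\underline\omega,z]))\bigr|$ converge uniformly on each plaque, so the product converges uniformly to a continuous positive function. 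The main obstacle is showing $\rho(G^{-n}([\underline\omega,y]))/\rho(G^{-n}([\underline\omega,z])) \to 1$, since a priori one only knows the densities are measurable. My plan is a bounded-distortion argument: on a block of positive $\eta$-measure the leafwise densities can be normalized to lie in a fixed interval on plaques of uniform size (using that each conditional measure is a probability measure on a plaque of comparable Lebesgue measure, via Corollary~\ref{Cor:uniform-size-extended}); by Poincaré recurrence almost every point returns to this block along some $n_k\to\infty$, along which the first factor stays bounded, and since $G^{-n_k}([\underline\omega,y])$ and $G^{-n_k}([\underline\omega,z])$ are exponentially close, it must converge to $1$. As an alternative I would fix a base point $[\underline\omega,x]$ in the plaque, set $\bar\rho([\underline\omega,y]) = \prod_{j\ge1}J^E(G^{-j}([\underline\omega,x]))/J^E(G^{-j}([\underline\omega,y]))$, verify that the probability measure with leafwise density $\bar\rho$ has a $G$-invariant disintegration with absolutely continuous conditionals, and invoke uniqueness of the disintegration to identify it with the conditional measure of $\eta$; checking that invariance is then the technical heart of the argument. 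Either way, I expect this control of the regularity of the densities to be the decisive step.
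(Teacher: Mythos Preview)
The paper does not prove this theorem at all: it is stated as a quotation of Liu--Qian \cite[Chapter VI, Proposition 2.2 and Corollary 8.1]{LiQ95} and used as a black box. So there is no ``paper's own proof'' to compare against.

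That said, your outline is essentially the classical Ledrappier--Young/Liu--Qian argument, and the ingredients you invoke (existence of a subordinate partition via Pesin blocks, H\"older regularity of $\log J^E$ along plaques from $(1+\alpha)$-domination, exponential backward contraction on $W^{E,u}$) are exactly the ones the paper itself assembles later in Lemma~\ref{Lem:bound-density}. Two remarks on the execution. First, your one-step cocycle identity is not quite well posed as written: $\rho$ is the density of the conditional measure \emph{with respect to a fixed partition} $\xi$, while $G^{-1}([\underline\omega,y])$ lies in an element of $G^{-1}\xi$, not of $\xi$; the correct statement compares the conditionals of $\xi$ and $G^{-1}\xi$ via the transition Jacobian, and only after iterating and passing to the limit does one obtain a formula for $\rho$ on a single $\xi$-atom. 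Second, of your two strategies for the remainder $\rho(G^{-n}y)/\rho(G^{-n}z)\to 1$, the second (define $\bar\rho$ by the infinite product, check it yields a $G$-invariant system of conditionals, invoke uniqueness of disintegration) is the one actually carried out in \cite{LiQ95} and is the cleaner route; the first relies on a priori local boundedness of $\rho$ that you have not yet established and is circular without further work.
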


Based on Corollary~\ref{Cor:uniform-size-extended}, one can define the notion ``the disintegration of $\mu$ on $W^u_{loc}(\Lambda^G_\ell)$''. We first give some construction of the foliated chart. Recall that the plaque families are given by the map $\Theta$ as in Theorem~\ref{Thm:plaque-family-extended}.

\begin{Lemma}\label{Lem:contruct-foliated-box}\footnote{For a set $A\subset\RR^{\dim E}$ and $v\in\RR^{\dim E}$, define $A+v=\{a+v,a\in A\}$.}
Assume that $\Lambda^G$ is a compact $G$-invariant set with a dominated splitting $TM|_{\Lambda^G}=E\oplus_\succ F$. Given $\ell\in\NN$ and $\alpha>0$, there are $\delta=\delta(\ell,\alpha)>0$ and $\beta=\beta(\ell,\alpha)\in(0,\delta/4)$ such that for any $[\underline\omega,x]\in \Lambda^G_\ell(E,\alpha)$,  there is a continuous map $v:~\Lambda^G_\ell(E,\alpha)\to \DD^E(\delta/4)$ such that
$$\bigcup_{[\underline\omega',x']\in B([\underline\omega,x],\beta+\varepsilon)\cap \Lambda^G_\ell(E,\alpha)}\Theta([\underline\omega',x'])(\DD^E(\delta/2+\varepsilon)+v([\underline\omega',x']))$$
is the image of foliated chart $\Phi$ as in Definition~\ref{Def:foliated-chart} associated to compact set $\Gamma(\delta,\beta+\varepsilon,[\underline\omega,x])$ for any $\varepsilon$ small enough  with the following precise properties:
\begin{itemize}
\item $\Gamma(\delta,\beta+\varepsilon,[\underline\omega,x])$ is chosen as
$$\Gamma(\delta,\beta+\varepsilon,[\underline\omega,x])=(\Omega^\ZZ\times \PP_M(W^F_\delta([\underline\omega,x])))\bigcap \left(\bigcup_{[\underline\omega',x']\in B([\underline\omega,x],\beta+\varepsilon)\cap \Lambda^G_\ell(E,\alpha)}\{W^E_{\delta/2}([\underline\omega',x']\}\right).$$
\item For any $[\underline\omega',x']\in\Gamma$, there is $[\underline\omega^*,x^*]\in\Lambda^G_\ell(E,\alpha)$ such that the image of $\Phi_{[\underline\omega',x']}$ is contained in $W^{E}_{\delta}([\underline\omega^*,x^*])$.

\end{itemize}
\end{Lemma}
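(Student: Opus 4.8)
The plan is to carve, out of the unstable plaques through the Pesin‑block points lying near $[\underline\omega,x]$, a family of pairwise disjoint translated sub‑discs, and to index them by their (unique) intersection with the fixed transversal $\Omega^\ZZ\times\PP_M(W^F_\delta([\underline\omega,x]))$. First fix $\ell$ and $\alpha$ and record the uniform data: $\Lambda^G_\ell(E,\alpha)$ is compact; by Corollary~\ref{Cor:uniform-size-extended} there is $\delta_0=\delta_0(\ell,\alpha)>0$ with $W^E_{\delta_0}([\underline\omega',x'])$ contained in the unstable manifold of $[\underline\omega',x']$ for all $[\underline\omega',x']\in\Lambda^G_\ell(E,\alpha)$; and by Lemma~\ref{Lem:dominated-unstable-extended} (applied with $\lambda=\mathrm{e}^{-\alpha\ell}$) there are uniform constants $C=C(\ell,\alpha)>0$ and $\lambda_*=\lambda_*(\ell,\alpha)\in(0,1)$ with $d(G^{-n}([\underline\omega',y]),G^{-n}([\underline\omega',z]))\le C\lambda_*^n d([\underline\omega',y],[\underline\omega',z])$ whenever $[\underline\omega',y],[\underline\omega',z]\in W^E_{\delta_0}([\underline\omega',x'])$. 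By Theorem~\ref{Thm:plaque-family-extended} the $E$- and $F$-plaques vary continuously in the $C^1$-topology with their base points, and by domination $TW^E$ and $TW^F$ make a uniformly positive angle, so $W^F_\delta([\underline\omega,x])$ is transverse to the $E$-plaques once $\delta$ is small. I would then shrink $\delta\in(0,\delta_0]$ and pick $\beta\in(0,\delta/4)$ so small (using that angle bound and the $C^1$-continuity of the plaque families, via an implicit-function argument with uniform constants) that for every $[\underline\omega',x']\in B([\underline\omega,x],\beta+\varepsilon)\cap\Lambda^G_\ell(E,\alpha)$, with $\varepsilon>0$ sufficiently small, the plaque $W^E([\underline\omega',x'])$ meets the transversal in exactly one point $p([\underline\omega',x'])$, lying at a $\Theta([\underline\omega',x'])$-coordinate of norm $<\delta/4$.

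Next, set $v([\underline\omega',x']):=\Theta([\underline\omega',x'])^{-1}(p([\underline\omega',x']))\in\DD^E(\delta/4)$ for $[\underline\omega',x']$ in that ball and extend $v$ continuously to all of $\Lambda^G_\ell(E,\alpha)$ (Tietze); $v$ is continuous on the ball because the transverse intersection point $p([\underline\omega',x'])$ depends continuously on $[\underline\omega',x']$ and $\Theta$ is a continuous family of embeddings. Take $\Gamma=\Gamma(\delta,\beta+\varepsilon,[\underline\omega,x])$ exactly as in the statement; it is compact, being cut out on the (compact) transversal by the plaques over the compact set $\overline{B}([\underline\omega,x],\beta+\varepsilon)\cap\Lambda^G_\ell(E,\alpha)$. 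For $p\in\Gamma$ choose a witness $[\underline\omega',x']$ in the ball with $p\in W^E_{\delta/2}([\underline\omega',x'])$ and define $\Phi_p(t)=\Theta([\underline\omega',x'])\bigl((\delta/2+\varepsilon)\,t+v([\underline\omega',x'])\bigr)$ for $t\in\DD^E$. Then $\Phi_p$ is a diffeomorphism onto the translated sub‑disc $\Theta([\underline\omega',x'])(\DD^E(\delta/2+\varepsilon)+v([\underline\omega',x']))$ of the plaque $W^E([\underline\omega',x'])$, with $\Phi_p(0)=p$, which gives properties (1)--(2) of Definition~\ref{Def:foliated-chart}; property (3) follows from the $C^1$-continuity of $\Theta$ together with continuity of the unstable lamination on $\Lambda^G_\ell(E,\alpha)$, which makes the leaf through $p$, hence $\Phi_p$, vary continuously with $p$; and since $\|v([\underline\omega',x'])\|<\delta/4$ and $\varepsilon$ is small, the image of $\Phi_p$ stays inside $W^E_\delta([\underline\omega',x'])$, which is the last displayed bullet with $[\underline\omega^*,x^*]=[\underline\omega',x']$. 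By construction the union of the images $\Phi_p(\DD^E)$ over $p\in\Gamma$ is precisely the set displayed in the lemma.

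It remains to establish property (4), the pairwise disjointness of the sub‑discs, which is the crux. Let $p\ne q$ in $\Gamma$ sit on plaques with base points $[\underline\omega',x']$ and $[\underline\omega'',x'']$. If $\underline\omega'\ne\underline\omega''$ then $\Phi_p\subset\{\underline\omega'\}\times M$ and $\Phi_q\subset\{\underline\omega''\}\times M$ are disjoint, so assume $\underline\omega'=\underline\omega''=:\underline\omega^\sharp$ and suppose $z\in\Phi_p\cap\Phi_q$. Then $z$ and $x'$ both lie in $W^E_{\delta_0}([\underline\omega^\sharp,x'])$, and $z$ and $x''$ both lie in $W^E_{\delta_0}([\underline\omega^\sharp,x''])$, so applying the uniform backward contraction of Lemma~\ref{Lem:dominated-unstable-extended} to the pairs $(z,x')$ and $(z,x'')$ gives $d(G^{-n}([\underline\omega^\sharp,x']),G^{-n}([\underline\omega^\sharp,x'']))\to 0$; hence $x'$ and $x''$ lie on a single unstable leaf $L$, and both $W^E_{\delta/2}([\underline\omega^\sharp,x'])$ and $W^E_{\delta/2}([\underline\omega^\sharp,x''])$ are open pieces of $L$. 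Then $p$ and $q$ both lie on $L$ and on the transversal $\Omega^\ZZ\times\PP_M(W^F_\delta([\underline\omega,x]))$; by the transversality arranged in the first step, $L$ meets the transversal in a single point, so $p=q$ — a contradiction. Thus $\Phi$ is a foliated chart. The main obstacle throughout is uniformity: all of $\delta$, $\beta$, the unstable‑plaque sizes, the backward‑contraction rate, the transversality angle, and the modulus of continuity of the unstable lamination must be controlled by $\ell$ and $\alpha$ alone, so that one pair $(\delta,\beta)$ works simultaneously for every choice of the central point $[\underline\omega,x]$.
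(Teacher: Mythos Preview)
Your proposal is correct and follows essentially the same approach as the paper: both use Corollary~\ref{Cor:uniform-size-extended} to get a uniform $\delta$ on the Pesin block, choose $\beta$ small enough that each nearby $E$-plaque meets the $F$-transversal $\Omega^{\ZZ}\times\PP_M(W^F_\delta([\underline\omega,x]))$ in a unique point, define $v$ as the $\Theta$-coordinate of that intersection, and set $\Phi_p$ via the translated parametrization $\Theta([\underline\omega',x'])(\,\cdot\,+v)$. Your write-up is in fact more thorough than the paper's, which omits the verification of pairwise disjointness (property (4) of Definition~\ref{Def:foliated-chart}); your argument for this via the uniform backward contraction of Lemma~\ref{Lem:dominated-unstable-extended} and uniqueness of the transverse intersection is the natural one.
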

\begin{proof}
By Corollary~\ref{Cor:uniform-size-extended}, there is $\delta=\delta(\ell,\alpha)>0$ such that for any point $[\underline\omega,x]\in\Lambda_\ell^G$, $W^{E}_\delta([\underline\omega,x])$ is contained in the unstable manifold of $[\underline\omega,x]$.

Choose $\beta>0$ that is much smaller than $\delta$, one has that for any $[\underline\omega',x']$ in the $\beta$-neighborhood of $[\underline\omega,x]$, $W^E_\delta([\underline\omega',x'])$ intersects $\{\underline\omega'\}\times \PP_M(W^F_\delta([\underline\omega,x]))$ transversely. The intersection point is denoted by $[\underline\omega',y]$. Take $\Gamma(\delta,\beta,[\underline\omega,x])$ to be the union of this kind of points.

\smallskip

The plaque family theorem (Theorem~\ref{Thm:plaque-family-extended}) in fact gives the foliated chart $\Phi$. More precisely, for the map $\Theta:~\Lambda^G\to {\rm Emb}^{r}(\DD^E,~\Omega^\ZZ\times M)$ as given in Theorem~\ref{Thm:plaque-family-extended}, one has that $W^E_\delta([\underline\omega',x'])=\Theta([\underline\omega',x'])(\DD^E(\delta))$. For $\beta>0$ small enough, one has that $[\underline\omega',y]$ is close to the center of $W^E_\delta([\underline\omega',x'])$. Assume that $[\underline\omega',y]=\Theta([\underline\omega',x'])(v([\underline\omega',y])))$ for some $v([\underline\omega',y])\in\DD^E$ close to $0$.

\medskip

Now one takes $\Phi([\underline\omega',y])(\DD^E(\delta/2))=\Theta([\underline\omega',x'])(\DD^E(\delta/2)+v([\underline\omega',y]))$.

Note that one can modify a little bit the size of the plaques and the neighborhood such that after the modification, it is still a foliated chart. Thus we introduce the small auxiliary constant $\varepsilon>0$.
\end{proof}
%
%
\begin{Definition}\label{Def:foliated-box-extended-pesin-block}
Assume that $\Lambda^G$ is a compact $G$-invariant set with a dominated splitting $TM|_{\Lambda^G}=E\oplus_\succ F$. Let $\mu$ be a $G$-invariant measure supported on $\Lambda^G$. Given $\ell\in\NN$ and $\alpha>0$, we say that \emph{the disintegration of $\mu$ on $W^u_{loc}(\Lambda^G_\ell(E,\alpha))$ is absolutely continuous w.r.t. Leb}
if for $\mu$-almost every $[\underline\omega,x]\in\Lambda^G_\ell(E,\alpha)$, for any foliated box $\Phi$ associated to $\Gamma(\delta,\beta,[\underline\omega,x])$ as in Lemma~\ref{Lem:contruct-foliated-box}, the conditional measures of $\mu|_{\Phi}$ along the canonical partition are absolutely continuous with respect to the Lebesgue measures along the elements of the partition.

\end{Definition}

\begin{Lemma}\label{Lem:bound-density}

Assume that $\Lambda^G$ is a compact $G$-invariant set with a dominated splitting $TM|_{\Lambda^G}=E\oplus_\succ F$. Given $\ell\in\NN$ and $\alpha>0$, there is $L=L(\ell,\alpha)>0$ such that for any Gibbs $E$-state $\mu$, for $\mu$-almost every point $[\underline\omega,x]\in\Lambda^G_\ell(E,\alpha)$, for the foliated chart $\Phi$ constructed as in Lemma~\ref{Lem:contruct-foliated-box}, for the measurable partition $\xi$ induced by the foliated chart $\Phi$, one has that
$$\frac{\rho([\underline\omega,y])}{\rho([\underline\omega,z])}\le L,~~~\mu_{\xi([\underline\omega,x])}-\text{almost every}~ [\underline\omega,y],[\underline\omega,z] \in \xi([\underline{\omega},x]),$$
where $\rho$ is the density of $\mu_\xi$ with respect to the Lebesgue measure on $\xi$.
\end{Lemma}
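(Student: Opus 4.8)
The plan is to read the density $\rho$ off the Liu--Qian formula of Theorem~\ref{Thm:liu-qian-density} and to bound the resulting infinite product by pairing the uniform exponential backward contraction on the Pesin block with the H\"older regularity of the Jacobian $J^E$ along unstable plaques.

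First I would fix $\ell$ and $\alpha$ and collect the uniform constants. By Lemma~\ref{Lem:alpha-dominate} the dominated splitting $E\oplus_\succ F$ is $(1+\gamma)$-dominated for some $\gamma\in(0,1)$, so by Theorem~\ref{Thm:plaque-family-extended} the plaques $W^E$ are $C^{1+\gamma}$ submanifolds varying continuously in the $C^{1+\gamma}$ topology; together with the uniform bound on the $C^2$ norms of the $f_\omega$ this makes $\log J^E$ a $\gamma$-H\"older function along each plaque $W^E$, with a H\"older constant $K_0$ uniform over $\Lambda^G$, and the same uniform constant controls $\log J^E$ along the backward iterates $G^{-j}(W^E_\delta([\underline\omega,x^*]))$ (contained in the unstable manifolds) of plaques centered at points $[\underline\omega,x^*]\in\Lambda^G_\ell(E,\alpha)$, since these discs retain a uniformly bounded $C^{1+\gamma}$ geometry. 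Applying Lemma~\ref{Lem:dominated-unstable-extended} with $\lambda=e^{-\alpha\ell}$ furnishes $\delta=\delta(\ell,\alpha)>0$ (the constant already appearing in Corollary~\ref{Cor:uniform-size-extended} and Lemma~\ref{Lem:contruct-foliated-box}), together with $C=C(\ell,\alpha)>0$ and $\lambda_*=\lambda_*(\ell,\alpha)\in(0,1)$, such that $d(G^{-j}(p),G^{-j}(q))\le C\lambda_*^{\,j}d(p,q)$ for all $j\ge0$ whenever $[\underline\omega,x^*]\in\Lambda^G_\ell(E,\alpha)$ and $p,q\in W^E_\delta([\underline\omega,x^*])$.

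Next, for $\mu$-almost every $[\underline\omega,x]\in\Lambda^G_\ell(E,\alpha)$, the element $\xi([\underline\omega,x])$ of the partition induced by the foliated chart $\Phi$ of Lemma~\ref{Lem:contruct-foliated-box} is an open subset of a plaque $W^E_\delta([\underline\omega,x^*])$ with $[\underline\omega,x^*]\in\Lambda^G_\ell(E,\alpha)$, and by Corollary~\ref{Cor:uniform-size-extended} this plaque lies in the unstable manifold of $[\underline\omega,x^*]$; hence $\xi$, extended to a global measurable partition $\mu$-subordinate to $W^{E,u}$, satisfies the hypotheses of Theorem~\ref{Thm:liu-qian-density}, and for $\mu_{\xi([\underline\omega,x])}$-almost every $[\underline\omega,y],[\underline\omega,z]\in\xi([\underline\omega,x])$ we obtain
$$\log\frac{\rho([\underline\omega,y])}{\rho([\underline\omega,z])}=\sum_{j=1}^{\infty}\big(\log J^E(G^{-j}([\underline\omega,z]))-\log J^E(G^{-j}([\underline\omega,y]))\big).$$
For each $j\ge1$ the points $G^{-j}([\underline\omega,y])$ and $G^{-j}([\underline\omega,z])$ lie on the common disc $G^{-j}(W^E_\delta([\underline\omega,x^*]))$, so the uniform H\"older bound for $\log J^E$ combined with the contraction estimate (and $d([\underline\omega,y],[\underline\omega,z])\le2\delta$) bounds the $j$-th summand in absolute value by $K_0(2C\delta)^{\gamma}\lambda_*^{\gamma j}$. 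Summing the geometric series gives $|\log(\rho([\underline\omega,y])/\rho([\underline\omega,z]))|\le K_0(2C\delta)^{\gamma}\lambda_*^{\gamma}/(1-\lambda_*^{\gamma})=:\log L$, with $L=L(\ell,\alpha)$ depending only on $\ell$ and $\alpha$, which is the asserted bound.

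The only genuinely nontrivial step is the second paragraph above: verifying that $\log J^E$ is H\"older along the unstable plaques and their backward iterates with a constant uniform over $\Lambda^G_\ell(E,\alpha)$, and matching this H\"older exponent against the uniform exponential contraction rate of Lemma~\ref{Lem:dominated-unstable-extended}, so that the telescoping series converges with a bound independent of the Gibbs $E$-state $\mu$. Everything else is a direct application of Lemma~\ref{Lem:contruct-foliated-box} and of the density formula of Theorem~\ref{Thm:liu-qian-density}.
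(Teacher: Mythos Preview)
Your proposal is correct and follows essentially the same route as the paper: invoke Lemma~\ref{Lem:alpha-dominate} and Theorem~\ref{Thm:plaque-family-extended} to get uniform H\"older regularity of $\log J^E$ along the plaques, apply the Liu--Qian density formula (Theorem~\ref{Thm:liu-qian-density}), and combine the H\"older bound with the uniform backward contraction from Lemma~\ref{Lem:dominated-unstable-extended} to sum the resulting geometric series. Your write-up is in fact slightly more explicit than the paper's in tracking why the H\"older estimate persists along the backward iterates and in invoking Lemma~\ref{Lem:contruct-foliated-box} to place $\xi([\underline\omega,x])$ inside $W^E_\delta([\underline\omega,x^*])$ for some $[\underline\omega,x^*]\in\Lambda^G_\ell(E,\alpha)$, but the argument is the same.
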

\begin{proof}

This uses the bundles are H\"older and the density estimation before. By Lemma~\ref{Lem:alpha-dominate}, one knows there is $\alpha_H>0$ such that $TM|_{\Lambda^G}=E\oplus_\succ F$ is in fact a $(1+\alpha_H)$-dominated splitting. By Theorem~\ref{Thm:plaque-family-extended}, the tangent spaces of the plaques are uniformly H\"older with exponent $\alpha_H$. Consequently, there is a constant $C_H>0$ such that $\log J^E$ is $(C_H,\alpha_H)$-H\"older along any plaque.

By Theorem~\ref{Thm:liu-qian-density}, the density function $\rho$ of disintegration with respect to the measurable partition induced by the foliated chart $\Phi$ has the following property: for $\widehat\mu$-almost every $[\underline\omega,x]\in\Gamma$, for $\mu_{[\underline\omega,x]}$-almost every $[\underline\omega,y],~[\underline\omega,z]\in \Phi_{[\underline\omega,x]}$, we have
$$\frac{\rho([\underline\omega,y])}{\rho([\underline\omega,z])}=\prod_{j=0}^{+\infty}\frac{J^E(G^{-j}([\underline{\omega},z]))}{J^E(G^{-j}([\underline{\omega},y]))}$$

By Lemma~\ref{Lem:dominated-unstable-extended}, one has the constant $C>0$ and $\lambda_*$ depending on $\ell$ and $\alpha$ such that for any $[\underline\omega,x]\in \Lambda^G_\ell(E,\alpha)$, for any $[\underline\omega,y],~[\underline\omega,z]\in W^E_\delta([\underline\omega,x])$,
$$d(G^{-n}([\underline\omega,y]),G^{-n}([\underline\omega,z]))\le C\lambda_*^n d([\underline\omega,y],[\underline\omega,z]).$$

Since the plaques are uniformly H\"older by Theorem~\ref{Thm:plaque-family-extended},  we have that
\begin{eqnarray*}
\prod_{j=0}^{+\infty}\frac{J^E(G^{-j}([\underline{\omega},z]))}{J^E(G^{-j}([\underline{\omega},y]))}&\le& \exp\{C_H\sum_{n=0}^\infty d(G^{-j}([\underline{\omega},z]),G^{-j}([\underline{\omega},y]))^{\alpha_H}\}\\
&\le& \exp\{C_H \sum_{n=0}^\infty C^{\alpha_H}(\lambda_*^{\alpha_H})^n\},
\end{eqnarray*}
It suffices to take
$$L=\exp\{C_H \sum_{n=0}^\infty C^{\alpha_H}(\lambda_*^{\alpha_H})^n\}.$$ 
\end{proof}

To verify an invariant measure $\mu$ is a Gibbs $E$-states, it suffices to verify this fact for an increasing sequence of Pesin blocks. The following Lemma~\ref{Lem:criterion-disintegration-extended} is folklore. 
\begin{Lemma}\label{Lem:criterion-disintegration-extended}
Assume that $\Lambda^G$ is a compact $G$-invariant set with a dominated splitting $TM|_{\Lambda^G}=E\oplus_\succ F$. If a $G$-invariant measure $\mu$ has the following properties:
\begin{itemize}

\item $\lim_{\ell\to\infty}\mu(\Lambda^G_\ell(E,\alpha))=1$,
\item The disintegration of $\mu$ on $W^u_{loc}(\Lambda^G_\ell(E,\alpha))$ is absolutely continuous w.r.t. Leb,

\end{itemize}
Then $\mu$ is a Gibbs $E$-state.

\end{Lemma}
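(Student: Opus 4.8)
The plan is to verify directly, in turn, the two defining conditions of a Gibbs $E$-state from Definition~\ref{Def:subordinate-extended}: that $\mu$-a.e.\ point has all Lyapunov exponents along $E$ positive, and that the conditional measures of $\mu$ are absolutely continuous with respect to Lebesgue along \emph{every} measurable partition $\mu$-subordinate to $W^{E,u}$.

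First I would dispose of the exponents, which follow for free from the first hypothesis. For $\mu$-a.e.\ $p=[\underline\omega,x]$ there is $\ell$ with $p\in\Lambda^G_\ell(E,\alpha)$; submultiplicativity of operator norms along the $DG$-invariant bundle $E$ turns the defining inequality into $\|DG^{-\ell n}|_{E(p)}\|\le {\rm e}^{-\alpha\ell n}$ for every $n$. Since $\mu$ is $G$-invariant and $\log\|DG^{\pm1}\|$ is bounded, Oseledets' theorem gives that the Lyapunov exponents along $E$ exist $\mu$-a.e., and the bound above forces each of them to be $\ge\alpha>0$. In particular Lemma~\ref{Lem:dominated-unstable-extended} applies and measurable partitions subordinate to $W^{E,u}$ exist.

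For absolute continuity, fix such a partition $\xi$. Since $\mu(\bigcup_\ell\Lambda^G_\ell(E,\alpha))=1$, for $\mu$-a.e.\ $p$ the set $\bigcup_\ell\Lambda^G_\ell(E,\alpha)$ is $\mu_{\xi(p)}$-conull, so it is enough to prove, for each fixed $\ell$, that $\mu_{\xi(p)}|_{\Lambda^G_\ell(E,\alpha)}\ll{\rm Leb}$ for $\mu$-a.e.\ $p\in\Lambda^G_\ell(E,\alpha)$. Fix $\ell$, take $\delta,\beta$ as in Lemma~\ref{Lem:contruct-foliated-box}, and cover the compact set $\Lambda^G_\ell(E,\alpha)$ (closed since $E$ is continuous), up to a $\mu$-null set, by finitely many foliated charts $\Phi^{(1)},\dots,\Phi^{(N)}$ of the type of Lemma~\ref{Lem:contruct-foliated-box} to which the second hypothesis (Definition~\ref{Def:foliated-box-extended-pesin-block}) applies; thus the conditionals of $\mu|_{\Phi^{(j)}}$ along the chart partition $\zeta^{(j)}=\{\Phi^{(j)}_r\}$ are absolutely continuous. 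Now fix one chart $\Phi=\Phi^{(j)}$, $\zeta=\zeta^{(j)}$. The key observation is that, inside $\Phi$, the partition $\xi$ automatically refines $\zeta$: by Lemma~\ref{Lem:contruct-foliated-box} and Corollary~\ref{Cor:uniform-size-extended} each chart-plaque is a piece of an unstable manifold and the transversal meets each unstable manifold only once, so $W^{E,u}(q)\cap\Phi=\Phi_{\pi(q)}=\zeta(q)$, whence $\xi(q)\cap\Phi\subseteq\zeta(q)$ because $\xi(q)$ is an open subset of $W^{E,u}(q)$. By the transitivity of disintegration (\cite{Rok67}, \cite[Section C.6]{BDV05}), for $\mu|_\Phi$-a.e.\ $q$ the conditional $(\mu|_\Phi)_{\xi(q)\cap\Phi}$ is obtained by disintegrating $(\mu|_\Phi)_{\zeta(q)}$ along the trace of $\xi$ on $\zeta(q)$; but that trace partition has pairwise disjoint \emph{open} atoms inside the second-countable plaque $\zeta(q)$, so it has at most countably many atoms and each of those conditionals is a normalized restriction of the absolutely continuous measure $(\mu|_\Phi)_{\zeta(q)}$, hence absolutely continuous. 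Translating back, $\mu_{\xi(q)}|_{\Phi}\ll{\rm Leb}$ for $\mu$-a.e.\ $q\in\Phi$. Taking the union over $j=1,\dots,N$ and then over $\ell$ yields $\mu_{\xi(p)}\ll{\rm Leb}$ for $\mu$-a.e.\ $p$, and together with the exponent estimate this shows $\mu$ is a Gibbs $E$-state.

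The main obstacle is exactly this transfer step — passing from ``the conditionals are absolutely continuous for the prescribed foliated charts over Pesin blocks'' to ``for every subordinate measurable partition''. What makes it work is that subordinate partitions have $\dim E$-dimensional, open atoms inside the unstable plaques; hence inside a foliated chart any subordinate partition refines the chart partition only up to an at most countable splitting of each plaque, and the relevant conditional measures then differ merely by restriction, which preserves absolute continuity with respect to the fixed leafwise Lebesgue measure. The remaining points — that $\Lambda^G_\ell(E,\alpha)$ is genuinely compact, and that the finitely-many-charts / countably-many-levels bookkeeping can be carried out simultaneously on a single conull set of $p$'s — are routine.
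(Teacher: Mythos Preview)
The paper does not prove this lemma; it simply declares it ``folklore'' and moves on. Your argument is a reasonable and essentially correct way to fill this in: the exponent part is immediate from the definition of the Pesin blocks, and the absolute continuity part is the right reduction --- cover each $\Lambda^G_\ell(E,\alpha)$ by finitely many foliated charts of Lemma~\ref{Lem:contruct-foliated-box}, use the hypothesis there, and transfer to an arbitrary subordinate partition by transitivity of disintegration.

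There is one imprecision worth flagging. You assert that inside a chart $\Phi$ the partition $\xi$ refines the chart partition $\zeta$, justifying this by ``$W^{E,u}(q)\cap\Phi=\zeta(q)$''. This need not be literally true: the atom $\xi(q)\subset W^E_{\delta(q)}(q)$ can have diameter of order~$1$ (the plaque scale of Theorem~\ref{Thm:plaque-family-extended}), whereas the chart plaques have size $\delta(\ell,\alpha)/2$, which may be much smaller; a long piece of unstable manifold can exit $\Phi$ and re-enter through another plaque, so $\xi(q)\cap\Phi$ may meet several $\zeta$-atoms. The fix is painless: work with the common refinement $\xi\vee\zeta$ on $\Phi$. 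This refines $\zeta$ by open sets (hence countably on each plaque, exactly as you argue), so its conditionals are normalized restrictions of the absolutely continuous $\zeta$-conditionals; and it also refines the trace of $\xi$ on $\Phi$, so the $\xi$-conditionals on $\Phi$ are countable convex combinations of absolutely continuous measures, hence absolutely continuous. Alternatively, you can sidestep the issue entirely by invoking \cite[Chapter~IV, Remark~2.1]{LiQ95} (already used in the proof of Proposition~\ref{cuproperty}): it suffices to exhibit \emph{one} subordinate partition with absolutely continuous conditionals, and the foliated charts over the Pesin blocks assemble into such a partition with atoms small enough that the refinement claim does hold.
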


\begin{proof}[Proof of Theorem~\ref{Thm:criterion-gibbs-E-extended}]
The strategy is to apply Lemma~\ref{Lem:criterion-disintegration-extended} to conclude. Now we prove that $\lim_{\ell\to\infty}\eta(\Lambda_\ell^G(E,\alpha))=1$. By the assumption, for any $\varepsilon>0$, there is $\ell\in\NN$ such that for all $n$ large enough, one has that $\eta_n(\Lambda_\ell^G(E,\alpha))\ge 1-\varepsilon$. Since $\Lambda_\ell^G(E,\alpha)$ is a compact set, one has that $\eta(\Lambda_\ell^G(E,\alpha))\ge \limsup_{n\to\infty}\eta_n(\Lambda_\ell^G(E,\alpha))>1-\varepsilon$. By the arbitrariness of $\varepsilon$, one has that $\lim_{\ell\to\infty}\eta(\Lambda_\ell^G(E,\alpha))=1$.

\bigskip

\begin{Claim}
There are finitely many foliated charts $\{\Phi^i\}_{i=1}^n$ associated to $\{\Gamma(\delta,\beta,[\underline\omega^i,x^i])\}$ as in Lemma~\ref{Lem:contruct-foliated-box} having the following properties:
\begin{itemize}

\item For each $1\le i\le n$, one has that 
$$\eta(\Phi^i)>0,~~~\eta(\partial\Phi^i)=0.$$

\item $\eta(\Lambda^G_\ell(E,\alpha)\setminus\cup_{1\le i\le n}\Phi^i)=0$.
\end{itemize}

\end{Claim}
\begin{proof}[Proof of the Claim]
For any point $[\underline\omega,x]\in\Lambda^G_\ell(E,\alpha)$ contained in the support of $\mu$, one can construct a foliation chart $\Phi$ associated to $\Gamma(\delta,\beta+\varepsilon,[\underline\omega,x])$. One can modify $\varepsilon$ a little bit such that $\eta(\partial\Phi)=0$. Since $\Lambda^G_\ell(E,\alpha)$ is compact, one can find finitely many $\{\Phi^i\}$ whose interiors cover the intersection of $\Lambda^G_\ell(E,\alpha)$ and the support of $\eta$.
\end{proof}
%
%
%
%
%

Now for each $\Phi\in\{\Phi^1,\Phi^2,\cdots,\Phi_n\}$, since $\eta(\partial\Phi)=0$, one has that $\lim_{n\to\infty}\eta_n(\Phi^i)=\eta(\Phi^i)>0$. Moreover, since $\eta(\partial\Phi)=0$, one has that $\eta_n|_{\Phi}\to\eta|_{\Phi}$ in the weak-* topology.

\bigskip

\begin{Claim} 
For any open set $\gamma\subset\Gamma$ whose boundary has zero $\widehat\eta$-measure
, one has that $$\limsup_{n\to\infty}\widehat\eta_n(\gamma)\le\widehat\eta(\gamma).$$
\end{Claim}
\begin{proof}[Proof of the Claim]
Since the boundary of $\gamma$ has zero $\widehat\eta$-measure, one has that
$$\widehat\eta(\gamma)=\widehat\eta(\overline\gamma)=\eta(\Phi(\overline\gamma\times \DD^E)).$$
Since $\overline\gamma\times \DD^E$ is a compact set, one has that
$$\eta(\Phi(\overline\gamma\times \DD^E))\ge\limsup_{n\to\infty}\eta_n(\Phi(\overline\gamma\times \DD^E))=\limsup_{n\to\infty}\widehat\eta_n(\overline\gamma)\ge\limsup_{n\to\infty}\widehat\eta_n(\gamma).$$
Thus one can conclude.
\end{proof}

Choose an open set $\gamma\subset\Gamma$ satisfying $\widehat\eta(\partial\gamma)=0$. For any open set $A\subset \DD^E$, one has that 
$$\eta(\Phi(\gamma\times A))\le\liminf_{n\to\infty}\eta_n(\Phi(\gamma\times A)).$$

By Theorem~\ref{Lem:bound-density}, one has that there is a constant $L$ depending on $\ell$, $\alpha$, but independent of $n$ such that for any $n\in\NN$, one has that
$$\eta_n(\Phi(\gamma\times A))\le L.\widehat\eta_n(\gamma). {\rm Leb}(A)$$
Consequently, by the above claim, one has that
$$\eta(\Phi(\gamma\times A))\le L.\widehat\eta(\gamma). {\rm Leb}(A)$$

By Lemma~\ref{Lem:measure-disintegration}, the disintegration of $\mu$ for this foliated chart is absolutely continuous with respect to the Lebesgue measure.

Since $\lim_{\ell\to\infty}\eta(\Lambda_\ell^G(E,\alpha))=1$, by Lemma~\ref{Lem:criterion-disintegration-extended}, one has that $\eta$ is a Gibbs $E$-state.
\end{proof}

\subsection{The applications of Theorem~\ref{Thm:criterion-gibbs-E-extended}: the proofs of Theorem~\ref{Thm:criterion-in-i} and Theorem~\ref{Thm:random-limit-G}}\label{Sec:prove-the-left}

We give the proof of Theorem~\ref{Thm:random-limit-G}.
\begin{proof}[Proof of Theorem~\ref{Thm:random-limit-G}]
Note that $\mu$ is a randomly ergodic limit. Assume that $\mu=\lim_{n\to\infty}\mu_n$, where $\mu_n$ is an ergodic stationary measure of a random dynamical system $(G,\nu_n)$, where $\{(G,\nu_n)\}_{n\in\NN}$ is a regular random perturbation of $f$.

By Lemma~\ref{Lem:invariant-measure-G}, for the extended dynamical system $G$, one has that $\lim_{n\to\infty}\mu_n^G=\mu^G$. 

Note also that $\mu_n^G$ are Gibbs $u$-states as in the proof of \cite[Proposition 5]{CoY05}.

\begin{Claim}
If any ergodic component of $\mu$ is not an SRB, then there is $i$ such that any ergodic component $\nu$ of $\mu$, one has that $\lambda_{i+1}^c(\nu)\ge 0$ but there is an ergodic component $\nu_-$ of $\mu$ such that $\lambda_{i+2}^c(\nu_-)<0$. 
\end{Claim}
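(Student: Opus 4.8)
The plan is to read the index $i$ off the Lyapunov spectra of the ergodic components of $\mu$. The domination orders the center exponents, Lemma~\ref{Lem:component-srb} pins down the bottom of their positive part, and $i$ is then one less than the smallest ``number of nonnegative center exponents'' occurring among the ergodic components. For the setup, note first that each $\mu_n^G$ is a Gibbs $u$-state and the set of Gibbs $u$-states is closed (Proposition~\ref{Pro:property-u-state}, via Lemma~\ref{Lem:correspondence}), so $\mu\in\cG_0$, and by Proposition~\ref{cuproperty} so is $\mu$-a.e.\ ergodic component $\nu$. For any ergodic $\nu$ supported on $\Lambda$, the domination $E^u\oplus_\succ E_1^c\oplus_\succ\cdots\oplus_\succ E_k^c\oplus_\succ E^s$ forces every exponent along $E^u$ to be positive, $\lambda_1^c(\nu)\ge\lambda_2^c(\nu)\ge\cdots\ge\lambda_k^c(\nu)$, and every exponent along $E^s$ to be negative; hence $\{\,j:\lambda_j^c(\nu)\ge0\,\}$ is an initial segment $\{1,\dots,\iota(\nu)\}$ with $\iota(\nu)\in\{0,1,\dots,k\}$.

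Next I would bound $\iota(\nu)$ below, uniformly in $\nu$. Put $i_0=I(\mu)$. By Lemma~\ref{Lem:component-srb}(1) applied with $i=i_0$, no ergodic component $\nu$ of $\mu$ satisfies $\lambda_{i_0+1}^c(\nu)\le0$ (such a $\nu$ would be an SRB measure, contrary to the hypothesis); the same lemma with $i=k$ — the bundle below $E_k^c$ being $E^s$, along which every exponent is $\le 0$ — shows that $i_0=k$ would make every ergodic component an SRB measure, so in fact $i_0\le k-1$. Combining this with the fact that $\mu\in\cG_{i_0}$ (so $\lambda_j^c(\nu)>0$ for $1\le j\le i_0$ at $\mu$-a.e.\ component, by Definition~\ref{Def:Gibbscu}(1)) we get $\lambda_j^c(\nu)>0$ for $1\le j\le i_0+1$, hence $\iota(\nu)\ge i_0+1\ge1$, for $\mu$-a.e.\ ergodic component $\nu$.

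Finally I would set $i:=\big(\min_\nu\iota(\nu)\big)-1$, the minimum being taken over ergodic components of $\mu$ — a finite set of integers, so attained. By the previous step $i\ge i_0\ge0$, and $i\le k-1$ since each $\iota(\nu)\le k$. By definition of $i$ every component satisfies $\iota(\nu)\ge i+1$, i.e.\ $\lambda_{i+1}^c(\nu)\ge0$, which is the first assertion; and any component $\nu_-$ realizing the minimum has $\iota(\nu_-)=i+1$, so $\lambda_{i+2}^c(\nu_-)<0$, which is the second assertion — where, in the case $i=k-1$, one reads $\lambda_{k+1}^c$ as the (negative) exponent along the uniformly contracted bundle $E^s$.

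The main obstacle is precisely the borderline case $i=k-1$, where one needs a genuinely negative exponent ``at position $k+1$'': this is the exponent along $E^s$, so the argument closes cleanly as long as $E^s$ is non-trivial, which holds in the situations where Theorem~\ref{maintheorem} is applied. To cover a possibly trivial $E^s$ one would instead note that $i=k-1$ then means every center exponent of every ergodic component is $\ge0$, and produce an SRB ergodic component directly — via Lemma~\ref{Lem:component-srb}(1) if some $\lambda_{i_0+1}^c(\nu)=0$, or via Theorem~\ref{Thm:criterion-in-i} if all of them are strictly positive with positive infimum over components — the only surviving possibility being $\mu\in\cG_{i_0}^0$, which is already the alternative conclusion built into the surrounding Theorem~\ref{Thm:random-limit-G}.
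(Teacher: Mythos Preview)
Your proof is correct and yields the same index $i$ as the paper, but the paper's route is considerably more direct: it simply shows that $\mu$ (hence every ergodic component $\nu$) is a Gibbs $u$-state, so $\lambda_1^c(\nu)<0$ would make $\nu$ an SRB measure by Lemma~\ref{Lem:component-srb} with $i=0$; this places $0$ in the set $\{j:\lambda_{j+1}^c(\nu)\ge0\text{ for every ergodic component }\nu\text{ of }\mu\}$, and $i$ is declared to be its maximum. Your construction via $\iota(\nu)$ and $i=\min_\nu\iota(\nu)-1$ recovers exactly this maximum, but the detour through $i_0=I(\mu)$ is unnecessary --- you only need $\mu\in\cG_0$, which you have already established, and invoking the full disintegration index buys nothing beyond the $i=0$ case of Lemma~\ref{Lem:component-srb}. (Your appeal to ``Lemma~\ref{Lem:component-srb} with $i=k$'' is also a stretch of the lemma as stated, though the underlying Ledrappier--Young argument does go through.)

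On the other hand, you correctly flag the borderline case $i=k-1$, where $\lambda_{i+2}^c$ has no literal meaning and one must either read it as the (negative) exponent along $E^s$ or argue separately. The paper's proof passes over this point in silence, so your closing paragraph is a genuine clarification.
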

\begin{proof}[Proof of the Claim]
We have that $\mu$ is a Gibbs $u$-state by Corollary~\ref{Cor:gibbs-u-compact}. See also \cite[Proposition 5]{CoY05}. Thus, any ergodic component $\nu$ of $\mu$ is also a Gibbs $u$-state by Proposition~\ref{Pro:property-u-state}. If $\lambda_1^c(\nu)<0$ for some ergodic component $\nu$ of $\mu$, then $\nu$ is an SRB measure by Lemma~\ref{Lem:component-srb}. This gives a contradiction. Thus $\lambda_1^c(\nu)\ge 0$ for any ergodic component $\nu$ of $\mu$. The maximal element of
$$\{j:~\lambda_{j+1}^c(\nu)\ge 0~\textrm{for any ergodic component}~\nu~\textrm{of }\mu\}$$
satisfies the property as in the Claim.
\end{proof}

Now one can assume that any ergodic component of $\mu$ is not an SRB measure.

\smallskip

By the above claim, one has that $\lambda_{i+1}^c(\nu)\ge 0$ for any ergodic component $\nu$ of $\mu$. Thus there is $\alpha>0$ (associated to the constant of the dominated splitting) such that $\lambda_i^c(\nu)>\alpha>0$ for any ergodic component $\nu$ of $\mu$. Thus, the same holds for $\mu^G$.

\smallskip

Take $E=E^u\oplus_\succ E_1^c\oplus_\succ\cdots\oplus_\succ E_i^c$. By Proposition~\ref{Pro:uniform-pesin-block}, one has for any $\varepsilon>0$ there is $\ell=\ell(\varepsilon)\in\NN$ such that $\liminf_{n\to\infty}\mu_n^G(\Lambda_\ell^G(E,\alpha))>1-\varepsilon$. Now one can apply Theorem~\ref{Thm:criterion-gibbs-E-extended} to conclude that $\mu^G$ is Gibbs $E$-state, hence so is $\mu$ by Lemma~\ref{Lem:correspondence}. Thus we have proved that $\mu\in\cG_i$.

\smallskip

There are several cases:

\begin{enumerate}

\item\label{i.zero} $\lambda_{i+1}^c(\nu_0)=0$ for some ergodic component $\nu_0$ of $\mu$.

\item\label{i.uniform} There is $\alpha>0$ such that $\lambda_{i+1}^c(\nu)>\alpha>0$ for any ergodic component $\nu$ of $\mu$.

\item\label{i.limit} $\lambda^{c}_{i+1}(\nu)>0$ for any ergodic component $\nu$ of $\mu$, but there is a sequence of ergodic components $\{\nu_n\}$ of $\mu$ such that $\lim_{n\to\infty}\lambda^c_{i+1}(\nu_n)=0$.

\end{enumerate}

In Case~\ref{i.zero}, by Lemma~\ref{Lem:component-srb}, one knows that $\nu_0$ is an SRB measure. This contradicts to the fact that we have assumed that no ergodic component of $\nu$ of $\mu$ is an SRB measure. Thus Case~\ref{i.zero} is impossible.

\smallskip

In Case~\ref{i.uniform}, by following the arguments above, one knows that $\mu\in\cG_{i+1}$. For completeness, we repeat the proof. Take $E'=E^u\oplus_\succ E_1^c\oplus_\succ\cdots\oplus_\succ E_i^c\oplus_{\succ}E_{i+1}^c$. By Proposition~\ref{Pro:uniform-pesin-block}, one has for any $\varepsilon>0$ there is $\ell=\ell(\varepsilon)\in\NN$ such that $\liminf_{n\to\infty}\mu_n^G(\Lambda_\ell^G(E',\alpha))>1-\varepsilon$. Now one can apply Theorem~\ref{Thm:criterion-gibbs-E-extended} to conclude that $\mu^G$ is Gibbs $E'$-state, hence so is $\mu$ by Lemma~\ref{Lem:correspondence}. Thus we have proved that $\mu\in\cG_{i+1}$. But there is an ergodic component $\nu_-$ of $\mu$ such that $\lambda_{i+2}^c(\nu_-)<0$, one has that $\nu_-$ is an SRB measure by Lemma~\ref{Lem:component-srb}. Thus Case~\ref{i.uniform} is impossible.

\smallskip

 In Case~\ref{i.limit}, one knows that $\mu\in\cG_i^0$ by definition. Thus one can conclude the theorem.
\end{proof}

We give the proof of Theorem~\ref{Thm:criterion-in-i}.

\begin{proof}[Proof of Theorem~\ref{Thm:criterion-in-i}]
Take $E=E^u\oplus_\succ E_i^c\oplus_\succ \cdots\oplus_\succ E_i^c$. By Corollary~\ref{Cor:one-diff-pesin-block}, for any $\varepsilon>0$, there is $\ell=\ell(\varepsilon)\in\NN$ such that
$$\liminf_{n\to\infty}\mu_n(\Lambda_\ell(E_j^c,\alpha))>1-\varepsilon.$$
Then one can apply Corollary~\ref{Cor:criterion-gibbs-E} directly to conclude.
\end{proof}

\appendix

\section{The absolute continuity of invariant manifolds}\label{App:invariant-manifold}

Let $W$ be an embedded manifold of $M$. A foliation $\cF$ of $W$ is \emph{absolutely coninuous} if for any two cross section $\Sigma_1$ and $\Sigma_2$ in $W$ that are close and transverse to the foliation $\cF$ in $W$, the holonomy map $h:\Sigma_1\to\Sigma_2$ defined by the foliation $\cF$ has the following property: $h_*({\rm Leb}_{\Sigma_1})$ is absolutely continuous with respect to ${\rm Leb}_{\Sigma_2}$.

A fundamental property of an absolutely continuous foliation is the following (one can see \cite[Lemma 3.4]{AVW15} for the proof):
\begin{Lemma}\label{Lem:absolutely-cont}
Assume that $W$ is an embedded sub-manifold of $M$ and $\cF$ is an absolutely continuous foliation of $W$. Then the conditional measures of the Lebesgue measure of $W$ with respect to the measurable partition associated to $\cF$ are absolutely continuous with respect to the Lebesgue measures of the leaves of $\cF$.

\end{Lemma}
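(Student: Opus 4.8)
The plan is to localize the problem to a single foliated chart, straighten the foliation by a change of coordinates which is smooth along the plaques, and recognize the transverse part of that change of coordinates as a holonomy map; the absolute continuity hypothesis then reduces the statement to Fubini's theorem. By the standard locality and transitivity properties of conditional measures, and by covering $W$ with a countable foliated atlas, it suffices to prove the following local version: if $B\subset W$ is a foliated box for $\cF$, then the conditional measures of ${\rm Leb}_W|_B$ along the partition of $B$ into plaques are absolutely continuous with respect to the Lebesgue measures of the plaques.

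So fix such a box $B$, write $d=\dim W$ and $k=\dim\cF$, and denote the plaques by $\{P_z\}$. First I would pick a smooth chart $\psi\colon B\to\DD^k\times\DD^{d-k}$ of the manifold $W$, so that $\psi_*({\rm Leb}_W|_B)=\rho\,dy\,dw$ with $\rho$ smooth and positive; shrinking $B$, one may assume that every $\psi(P_z)$ is the graph $\{(y,g_z(y)):y\in\DD^k\}$ of a smooth map $g_z$ with $g_z(0)=z$, and that every affine slice $\Sigma^{(y)}:=\psi^{-1}(\{y\}\times\DD^{d-k})$ is a smooth cross-section transverse to $\cF$ (transversality holds at $y=0$ and is an open condition); set $\Sigma:=\Sigma^{(0)}$. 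Define $\Theta(y,z)=(y,g_z(y))$. Because the plaques are pairwise disjoint and vary continuously, $\Theta$ is a homeomorphism onto its image, which we take to be $\psi(B)$ after one more shrinking; it restricts, for each fixed $z$, to a smooth diffeomorphism onto $\psi(P_z)$, and it carries the trivial foliation $\{z=\text{const}\}$ onto $\{\psi(P_z)\}$.

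The heart of the argument is the computation of $(\Theta^{-1}\circ\psi)_*({\rm Leb}_W|_B)$. For each fixed $y$, the map $z\mapsto g_z(y)$, read in the coordinate $\psi$, is precisely the $\cF$-holonomy $G_y\colon\Sigma\to\Sigma^{(y)}$; hence, applying the absolute continuity of $\cF$ to the holonomy $\Sigma^{(y)}\to\Sigma$, one obtains $(G_y^{-1})_*{\rm Leb}\ll{\rm Leb}$ in the coordinate $z$, say with density $\kappa_y\in L^1_{\ge0}$. Performing the substitution $w=g_z(y)$ in the transverse integral and using Fubini then gives
\[
(\Theta^{-1}\circ\psi)_*({\rm Leb}_W|_B)\;=\;\rho\big(y,g_z(y)\big)\,\kappa_y(z)\,dy\,dz,
\]
which is absolutely continuous with respect to $dy\,dz$ (the density lies in $L^1$ with total mass ${\rm Leb}_W(B)$, and its joint measurability follows from the measurable dependence of the Lebesgue decomposition on parameters). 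Disintegrating this measure along $\{z=\text{const}\}$ is now Fubini: for almost every $z_0$ the conditional measure on $\{z=z_0\}$ is a positive multiple of $\rho(y,g_{z_0}(y))\,\kappa_y(z_0)\,dy$, hence absolutely continuous with respect to $dy$. Pushing this back through the smooth diffeomorphisms $\Theta(\cdot,z_0)$ and $\psi$ — which transport the partition and its conditional measures by uniqueness of disintegration, and preserve Lebesgue measure classes — gives that the conditional measures of ${\rm Leb}_W|_B$ along $\{P_z\}$ are absolutely continuous with respect to ${\rm Leb}_{P_z}$, which is what was needed.

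I expect the main obstacle to be conceptual rather than computational: an abstract foliation supplies a foliated chart that is smooth only along the plaques and merely continuous in the transverse direction, so $\Theta$ cannot be differentiated in $z$ and a naive Fubini is simply unavailable — this is the ``Fubini's nightmare'' phenomenon. What rescues the argument is that the transverse Jacobian of $\Theta$ is a holonomy map, and absolute continuity of $\cF$ is exactly the hypothesis that replaces the missing Jacobian by an honest $L^1$ density $\kappa_y$. Once this is in hand, the remaining ingredients — the reduction to boxes, the parametrized measurability of the $\kappa_y$, and the functoriality of disintegration under Borel isomorphisms — are routine.
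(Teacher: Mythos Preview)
Your argument is correct and is the standard one: straighten the foliation in a smooth chart of $W$, recognize the transverse component of the straightening as a holonomy, replace the missing transverse Jacobian by the $L^1$ density supplied by the absolute-continuity hypothesis, and then disintegrate via Fubini. The only point requiring care is the joint measurability of $(y,z)\mapsto\kappa_y(z)$, which you correctly flag; one clean way to dispatch it is to bypass the densities and show directly that $(\Theta^{-1}\circ\psi)_*({\rm Leb}_W|_B)\ll dy\,dz$ by checking that $\Theta$ maps $dy\,dz$--null sets to $dy\,dw$--null sets (slice with Fubini and use that each $G_y$ sends null sets to null sets, which is exactly absolute continuity of the holonomy $\Sigma\to\Sigma^{(y)}$).

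As for comparison with the paper: there is nothing to compare. The paper does not prove this lemma but simply cites \cite[Lemma 3.4]{AVW15}; your write-up is essentially a self-contained version of that cited argument.
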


About the plaque families, one has the following result (Lemma~\ref{Lem:absC}) on the absolute continuity. Recall that 

\begin{Lemma}\label{Lem:absC}
Assume that $f$ is a $C^2$ diffeomorphism and assume that $\Lambda$ is a compact $f$-invariant set with a dominated splitting $TM|_\Lambda=\Delta_1\oplus_\succ \Delta_2\oplus_\succ \Delta_3$. Given $\ell\in\NN$ and $\alpha>0$, there is $\delta=\delta(\ell,\alpha)$ such that for any point $x\in\Lambda_\ell(\Delta_2,\alpha)$, i.e., 
$$\prod_{i=0}^{n-1}\|Df^{-\ell}|_{\Delta_2(f^{-i \ell}(x))}\|\le {\rm e}^{-\alpha \ell n},~\forall n\in\NN$$
the foliation
$$\{W^{\Delta_1}_\delta(y):~y\in W^{\Delta_1\oplus\Delta_2}(x)\}$$
is an absolutely continuous foliation of $W^{\Delta_1\oplus\Delta_2}(x)$.
\end{Lemma}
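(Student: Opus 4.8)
The plan is to recognise the foliation in the statement as the strong unstable sub-foliation inside the local unstable manifold of $x$, and to establish its absolute continuity by the classical backward-iteration and bounded-distortion argument (as in Brin--Pesin and Pugh--Shub for the uniformly partially hyperbolic case), carried out on the Pesin block $\Lambda_\ell(\Delta_2,\alpha)$ so that every constant — in particular $\delta$ — depends only on $(\ell,\alpha)$. \emph{Step 1 (reduction).} Because $\Delta_1\oplus_\succ\Delta_2$, the defining inequality of $x\in\Lambda_\ell(\Delta_2,\alpha)$ upgrades to exponential decay of $\prod_{i=0}^{n-1}\|Df^{-\ell}|_{\Delta_1\oplus\Delta_2(f^{-i\ell}x)}\|$ along the backward orbit of $x$, at a rate depending only on $(\ell,\alpha)$, and moreover $Df^{-n}$ contracts the $\Delta_1$-direction strictly faster than the $\Delta_2$-direction, with a definite gap given by the domination constant. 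Hence, by Lemma~\ref{Lem:dominated-unstable} together with the uniform-size estimate (the deterministic case of Corollary~\ref{Cor:uniform-size-extended}), there is $\delta=\delta(\ell,\alpha)>0$ such that $W^{\Delta_1\oplus\Delta_2}_\delta(x)\subset W^u_{loc}(x)$ and $f^{-1}$ restricts to a $C^{1+\alpha}$ partially hyperbolic contraction of $W^{\Delta_1\oplus\Delta_2}_\delta(x)$ into $W^{\Delta_1\oplus\Delta_2}_\delta(f^{-1}x)$, whose invariant splitting extends $\Delta_1\oplus_\succ\Delta_2$ and whose strong stable foliation is precisely $\{W^{\Delta_1}_\delta(y):y\in W^{\Delta_1\oplus\Delta_2}_\delta(x)\}$ — for $y\in\Lambda$ this is the $\Delta_1$-plaque family, and for the remaining $y$ it is the strong stable manifold of this $C^{1+\alpha}$ contraction (Theorem~\ref{Thm:plaque-family}, Theorem~\ref{Thm:plaque-family-extended}). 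The $C^{1+\alpha}$-regularity used here is available because $f\in C^2$ and, by Lemma~\ref{Lem:alpha-dominate}, the splitting is $(1+\alpha)$-dominated, so by Theorem~\ref{Thm:plaque-family-extended} the plaques $W^{\Delta_1\oplus\Delta_2}$ and $W^{\Delta_1}$ are $C^{1+\alpha}$ and vary $C^{1+\alpha}$-continuously; in particular $z\mapsto\det(Df\,|\,T_zW^{\Delta_1\oplus\Delta_2})$ is $\alpha$-Hölder along each plaque.

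\emph{Step 2 (holonomy Jacobian).} Fix two nearby $C^1$ cross-sections $\Sigma_1,\Sigma_2\subset W^{\Delta_1\oplus\Delta_2}_\delta(x)$ transverse to the foliation; the transversality is uniform because the angle between $\Delta_1$ and $\Delta_2$ is bounded below on $\Lambda$, hence on the plaques. Let $h:\Sigma_1\to\Sigma_2$ be the holonomy. For each $n$, $f^{-n}$ conjugates $h$ to the holonomy $h^{(n)}=f^{-n}\circ h\circ f^{n}$ between $f^{-n}\Sigma_1$ and $f^{-n}\Sigma_2$, and the chain rule gives, for every $p\in\Sigma_1$,
$$ {\rm Jac}(h)(p)={\rm Jac}(h^{(n)})(f^{-n}p)\cdot\prod_{m=1}^{n}\frac{\det\!\big(Df\,|\,T_{f^{-m}h(p)}f^{-m}\Sigma_2\big)}{\det\!\big(Df\,|\,T_{f^{-m}p}f^{-m}\Sigma_1\big)}. $$
Two uniform facts drive the estimate. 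First, $f^{-m}p$ and $f^{-m}h(p)$ lie on a common leaf $W^{\Delta_1}$, with $d(f^{-m}p,f^{-m}h(p))\le C\lambda_*^{m}$ for $C,\lambda_*\in(0,1)$ depending only on $(\ell,\alpha)$ (Lemma~\ref{Lem:dominated-unstable}), and the tangent spaces $T_{f^{-m}p}f^{-m}\Sigma_i$ converge to $\Delta_2$ at the exponential rate of the domination constant. Second, $\det(Df\,|\,TW^{\Delta_1\oplus\Delta_2})$ is $\alpha$-Hölder along plaques (Step 1). Combining, the $m$-th factor is $1+O((\lambda_*^{\alpha})^{m})+O(\lambda^{m})$, so the infinite product converges and its partial products are pinched between constants depending only on $(\ell,\alpha)$; the same geometry forces the iterated cross-sections to become exponentially $C^1$-close while the leaves vary $C^{1+\alpha}$-continuously, which — by the usual small-scale / $\lambda$-lemma argument — yields ${\rm Jac}(h^{(n)})\to1$. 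Letting $n\to\infty$, ${\rm Jac}(h)$ is bounded above and below by a constant depending only on $(\ell,\alpha)$, so $h_*{\rm Leb}_{\Sigma_1}$ and ${\rm Leb}_{\Sigma_2}$ are mutually absolutely continuous; i.e. the foliation is absolutely continuous for the $\delta=\delta(\ell,\alpha)$ produced in Step 1 (shrunk if necessary), which is the assertion.

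\emph{Main obstacle.} The real work is the bookkeeping of \emph{uniformity}: $\delta$, the backward-contraction rates, the domination gap, the Hölder constant of the log-Jacobian, and the final bound on ${\rm Jac}(h)$ must all be extracted from the single quantitative hypothesis $x\in\Lambda_\ell(\Delta_2,\alpha)$ and the fixed domination constants, independently of $x$ and of the chosen cross-sections. The two technically delicate points are (i) defining, and controlling the $C^{1+\alpha}$-dependence of, the leaves $W^{\Delta_1}_\delta(y)$ through the non-recurrent points $y\in W^{\Delta_1\oplus\Delta_2}_\delta(x)\setminus\Lambda$ — handled via the strong stable manifold / plaque family theorem for the restricted contraction — and (ii) the step ${\rm Jac}(h^{(n)})\to1$, which is the one place where one genuinely uses that this is a \emph{dynamically defined} foliation with $C^{1+\alpha}$ leaves rather than an arbitrary continuous foliation with $C^1$ leaves.
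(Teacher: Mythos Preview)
Your proposal is correct and follows essentially the same route as the paper: reduce to the observation that $W^{\Delta_1}_\delta$ is the strong-unstable sub-foliation inside the local unstable manifold $W^{\Delta_1\oplus\Delta_2}_\delta(x)$ (using the domination to upgrade the $\Delta_2$-decay to $\Delta_1\oplus\Delta_2$-decay and Lemma~\ref{Lem:dominated-unstable} to produce a uniform $\delta(\ell,\alpha)$), then invoke the classical absolute-continuity argument. The only difference is one of detail: the paper's proof is a brief sketch that cites \cite[Chapter~11]{BaP02} for the holonomy-Jacobian estimate, whereas you actually outline that argument (bounded distortion via H\"older regularity of the Jacobian along $C^{1+\alpha}$ plaques, exponential convergence of iterated transversals to $\Delta_2$, and ${\rm Jac}(h^{(n)})\to 1$).
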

\begin{proof}
We give a sketch of the proof. By relaxing the constants, for any point $x\in\Lambda_\ell(\Delta_2,\alpha)$, one has that
$$\prod_{i=0}^{n-1}\|Df^{-\ell}|_{\Delta_1\oplus_\succ\Delta_2(f^{-i \ell}(x))}\|\le {\rm e}^{-\alpha \ell n},~\forall n\in\NN$$
Thus, by Lemma~\ref{Lem:dominated-unstable}, there is $\delta=\delta(\ell,\alpha)$ such that $W^{\Delta_1\oplus\Delta_2}_\delta(x)$ is contained in the (exponentially) unstable manifold of $x$.

Thus, by reducing $\delta$ if necessary, for any point $y\in W^{\Delta_1\oplus\Delta_2}_\delta(x)$, $W^{\Delta_1}_\delta(y)$ is the stronger unstable manifold in $W^{\Delta_1\oplus\Delta_2}_\delta(x)$. The absolutely continuity follows from a similar argument in \cite[Chapter 11]{BaP02}.
\end{proof}

Now we can give the proof of Proposition~\ref{Proposition:several-G}.
\begin{proof}[Proof of Proposition~\ref{Proposition:several-G}]
It suffices to prove that for any $0\le i\le k-1$, one has that $\cG_i\supset\cG_{i+1}$. We set $E=E^{uu}\oplus_\succ E_1^c\oplus_\succ\cdots\oplus_\succ E_{i+1}^c$ and $\Delta=E^{uu}\oplus_\succ E_1^c\oplus_\succ\cdots\oplus_\succ E_{i}^c$. For any measure $\mu\in\cG_{i+1}$, one knows that
\begin{enumerate}
\item\label{i.positive-lya} $\mu$-almost every point has its Lyapunov exponents along $E$ are positive.

\item\label{i.disintegrationdd}The conditional measures of $\mu$ along $W^{E,u}$ are absolutely continuous w.r.t. Lebesgue.

\end{enumerate}
By Item~\ref{i.positive-lya}, there are $\ell\in\NN$ and $\alpha>0$ such that $x\in \Lambda_\ell(\alpha,E^c_{i+1})$. By Lemma~\ref{Lem:absC}, the foliation
$$\{W^{\Delta}_\delta(y):~y\in W^{E}(x)\}$$
is an absolutely continuous foliation of $W^{E}(x)$. Thus from Lemma~\ref{Lem:absolutely-cont}, the conditional measures of the Lebesgue measure on $W^E(x)$ along the foliation $\{W^{\Delta}_\delta(y):~y\in W^{E}(x)\}$ are Lebesgue measures. By Item~\ref{i.disintegrationdd} and the transitivity of conditional measures, one can conclude.
\end{proof}

\section{The proof of the Pliss-like lemma}\label{Sec:pliss-like}

\begin{proof}[Proof of Lemma~\ref{Lem:pliss-like}]
For any given $\varepsilon>0$, take
$$
0<\rho< \min\left\{1,\frac{(\gamma_2-\gamma_1)}{2(2C-\gamma_1)}, \frac{\gamma_2-\gamma_1}{C-\gamma_1}\varepsilon\right\}.
$$
The subset $\JJ\subset\NN$ is defined by
$$\JJ=\{j\in\NN:~\sum_{i=0}^{n-1}a_{i+j}\le n\gamma_2,~~~\forall n\in\NN\}.$$ 
We are going to prove that $\limsup_{n\to\infty}\frac{1}{n}\#(\JJ\cap[1,n])\ge 1-\varepsilon$. Fix $\gamma=(\gamma_1+\gamma_2)/2$.

\begin{Claim}
For any $L$ large enough, one has that
$$\sum_{i=1}^L a_i\le L\gamma.$$

\end{Claim}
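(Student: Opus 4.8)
The plan is to split the partial sum $\sum_{i=1}^{L}a_i$ according to membership in the dense set $\LL$, bound the block of indices lying in $\LL$ term-by-term by $\gamma_1$ and the remaining block by $C$, and then check that the resulting estimate stays below $L\gamma$ once $L$ is large, using the explicit smallness imposed on $\rho$.

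Concretely, I would first use the density hypothesis $\liminf_{n\to+\infty}\frac1n\#\{[0,n-1]\cap\LL\}>1-\rho$ to fix $N$ such that $\#\{[0,L-1]\cap\LL\}>(1-\rho)L$ for every $L\ge N$. Passing from the window $[0,L-1]$ to $[1,L]$ changes the count by at most one, so $\#\{[1,L]\setminus\LL\}<\rho L+1$ for $L\ge N$. Since $a_i\le\gamma_1$ whenever $i\in\LL$ and $a_i\le|a_i|\le C$ always, and since $C-\gamma_1>0$ (as $\gamma_1<\gamma_2<C$), this gives
$$\sum_{i=1}^{L}a_i\le\gamma_1\,\#\{[1,L]\cap\LL\}+C\,\#\{[1,L]\setminus\LL\}=\gamma_1 L+(C-\gamma_1)\,\#\{[1,L]\setminus\LL\}\le\gamma_1 L+(C-\gamma_1)(\rho L+1).$$

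It then remains to check that the last expression is at most $L\gamma=\gamma_1 L+\tfrac12(\gamma_2-\gamma_1)L$, i.e. that $(C-\gamma_1)\rho+(C-\gamma_1)/L\le\tfrac12(\gamma_2-\gamma_1)$. From $\rho<\frac{\gamma_2-\gamma_1}{2(2C-\gamma_1)}$ one gets $(C-\gamma_1)\rho<\tfrac12(\gamma_2-\gamma_1)\cdot\frac{C-\gamma_1}{2C-\gamma_1}$, so the difference $\tfrac12(\gamma_2-\gamma_1)-(C-\gamma_1)\rho$ exceeds the positive quantity $\tfrac12(\gamma_2-\gamma_1)\cdot\frac{C}{2C-\gamma_1}$; choosing $L$ large enough that $(C-\gamma_1)/L$ is below this quantity finishes the claim.

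The only point needing care — the nearest thing to an obstacle here — is the bookkeeping: handling the off-by-one between the index windows $[0,n-1]$ and $[1,n]$, and confirming that the prescribed bound on $\rho$ really leaves room to absorb the $O(1/L)$ error term. No analytic input beyond $|a_n|\le C$ and the density of $\LL$ is used.
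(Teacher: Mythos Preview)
Your proposal is correct and follows essentially the same approach as the paper: split $\sum_{i=1}^L a_i$ according to membership in $\LL$, bound the $\LL$-terms by $\gamma_1$ and the rest by $C$, then use the density hypothesis together with the smallness condition $\rho<\frac{\gamma_2-\gamma_1}{2(2C-\gamma_1)}$ to conclude. The only cosmetic difference is bookkeeping for the $O(1)$ error: the paper absorbs it via the condition $\rho L>1$, while you absorb $(C-\gamma_1)/L$ directly by taking $L$ large; both are fine.
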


\begin{proof}
Choose a large integer $L\in\LL$ such that $\rho L>1$. By the property of $\LL$, there are integers $\GG=\{n_1,n_2,\cdots,n_k\}\subset [1,L]$ such that $\#\GG\ge (1-\rho)L$ and $a_{n_i}<\gamma_1$. Thus, one has that
$$\sum_{i=1}^L a_i=\sum_{m\in \GG}a_m+\sum_{m\in[1,L]\setminus \GG}a_m\le \gamma_1(1-\rho)L+C(\rho L+1) \le ((2C-\gamma_1)\rho+\gamma_1)L\le \gamma L$$
when $\rho<(\gamma_2-\gamma_1)/2(2C-\gamma_1)$.
\end{proof}
From the above Claim, by the usual Pliss Lemma as in \cite{pliss}, one knows that $\JJ$ is a non-empty set with infinite cardinality.
%

To conclude, it suffices to prove that for some large $J\in\JJ$, one has that $\JJ\cap[1,J]\ge (1-\varepsilon)J$. We will prove by contradiction and assume that $\JJ\cap[1,J]< (1-\varepsilon)J$ for any large $J$. $[1,J]\setminus\JJ$ can be split into finitely many intervals $\{I_\alpha=[c_\alpha,d_\alpha)\}_{\alpha\in\cA}$ such that 
\begin{itemize}
\item $\sum_{m\in [c_\alpha,d_\alpha)}a_m\ge (d_\alpha-c_\alpha)\gamma_2$ for any $\alpha\in\cA$.
\item $\sum_{\alpha\in\cA}(d_\alpha-c_\alpha)\ge \varepsilon J$.

\end{itemize}
Set $\BB=\cup_{\alpha\in\cA}I_\alpha$. Since $\liminf_{n\to +\infty}\frac{1}{n}\#\{[0,n-1]\cap \LL\}>1-\rho$, for $J$ large enough, one has that $\# (\LL\cap [1,J])\ge (1-\rho)J$.

\begin{Claim} One has the following estimate:
$$\#(\BB\setminus\LL)\ge \frac{\gamma_2-\gamma_1}{C-\gamma_1}\#(\BB)\ge  \frac{\gamma_2-\gamma_1}{C-\gamma_1}\varepsilon J.$$
\end{Claim}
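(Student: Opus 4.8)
The plan is to argue interval by interval on the decomposition $\BB=\bigcup_{\alpha\in\cA}I_\alpha$, $I_\alpha=[c_\alpha,d_\alpha)$, exploiting the defining property $\sum_{m\in I_\alpha}a_m\ge(d_\alpha-c_\alpha)\gamma_2$ of these intervals together with the pointwise bounds $a_m\le\gamma_1$ for $m\in\LL$ and $a_m\le|a_m|\le C$ for every $m$. Note first that the hypotheses of the lemma give $\gamma_1<\gamma_2\le\max\{0,\gamma_2\}<C$, so $\gamma_2-\gamma_1>0$ and $C-\gamma_1>0$; in particular every fraction written below is well defined and positive, and dividing by $C-\gamma_1$ is legitimate.

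Fix $\alpha\in\cA$, write $n_\alpha=d_\alpha-c_\alpha=\#I_\alpha$ and $p_\alpha=\#(I_\alpha\setminus\LL)$, so that $\#(I_\alpha\cap\LL)=n_\alpha-p_\alpha$. Splitting the sum over $I_\alpha$ according to membership in $\LL$ and using the two pointwise bounds gives
$$n_\alpha\gamma_2\le\sum_{m\in I_\alpha}a_m\le\gamma_1(n_\alpha-p_\alpha)+Cp_\alpha=\gamma_1 n_\alpha+(C-\gamma_1)p_\alpha,$$
hence $(\gamma_2-\gamma_1)n_\alpha\le(C-\gamma_1)p_\alpha$, i.e. $p_\alpha\ge\frac{\gamma_2-\gamma_1}{C-\gamma_1}\,n_\alpha$. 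Since the intervals $I_\alpha$ are pairwise disjoint, summing over $\alpha\in\cA$ and using $\#\BB=\sum_\alpha n_\alpha$ and $\#(\BB\setminus\LL)=\sum_\alpha p_\alpha$ yields the first inequality $\#(\BB\setminus\LL)\ge\frac{\gamma_2-\gamma_1}{C-\gamma_1}\#\BB$. The second inequality is then immediate from the bound $\#\BB=\sum_{\alpha\in\cA}(d_\alpha-c_\alpha)\ge\varepsilon J$ recorded just before the statement of the claim.

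I expect no genuine obstacle here: the argument is a one-line averaging estimate, and the only points requiring attention are the bookkeeping of which $a_m$ obeys which bound (the $\gamma_1$ bound only on $\LL$, the $C$ bound everywhere) and checking $C-\gamma_1>0$ before dividing. This estimate will then be combined with $\#(\BB\setminus\LL)\le\#([1,J]\setminus\LL)\le\rho J$ for $J$ large — valid because $\LL$ has lower density strictly above $1-\rho$ — together with the choice $\rho<\frac{\gamma_2-\gamma_1}{C-\gamma_1}\varepsilon$ fixed at the start of the proof, to obtain $\rho J\ge\#(\BB\setminus\LL)\ge\frac{\gamma_2-\gamma_1}{C-\gamma_1}\varepsilon J>\rho J$, the contradiction that completes the proof of the lemma.
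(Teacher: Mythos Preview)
Your argument is correct and essentially identical to the paper's: the paper performs the same averaging estimate directly over all of $\BB$ at once (using $\sum_{i\in\BB}a_i\ge(\#\BB)\gamma_2$ and $\sum_{i\in\BB}a_i\le(\#\BB)\gamma_1+(\#(\BB\setminus\LL))(C-\gamma_1)$), while you do it interval by interval and then sum; the ingredients and logic are the same.
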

\begin{proof}
We have the following two estimates:
\begin{itemize}
\item $\sum_{i\in\BB}a_i>(\#\BB)\gamma_2$.

\item $\sum_{i\in\BB}a_i\le \sum_{i\in\BB\cap \LL}a_i+\sum_{i\in \BB\setminus\LL}a_i\le (\#(\BB\cap\LL))\gamma_1+(\#(\BB\setminus\LL))C=(\#\BB)\gamma_1+(\#(\BB\setminus\LL))(C-\gamma_1).$

\end{itemize}
By combining the above two inequalities one obtains that 
$\#(\BB\setminus\LL)\ge \frac{\gamma_2-\gamma_1}{C-\gamma_1}\#(\BB)$.
The last inequality follows from $\#\BB\ge \varepsilon J$.
\end{proof}

Consequently, we have that
\begin{eqnarray*}
\rho J&\ge&\#([1,J]\setminus\LL)\ge \#(\BB\setminus\LL)\\
&\ge&\frac{\gamma_2-\gamma_1}{C-\gamma_1}\#(\BB)\ge  \frac{\gamma_2-\gamma_1}{C-\gamma_1}\varepsilon J.
\end{eqnarray*}
This gives a contradiction since $\rho< {(\gamma_2-\gamma_1)}\varepsilon/{(C-\gamma_1)}$.
\end{proof}

\vskip 5pt

\noindent Yongluo Cao

\noindent School of Mathematical Sciences, Shanghai key Labaratory of PMMP

\noindent East China Normal University, Shanghai, 200062, P.R. China

\noindent School of Mathematical Sciences, Center for Dynamical Systems and Differential Equations

\noindent Soochow University, Suzhou, 215006, P.R. China

\noindent ylcao@suda.edu.cn

\vskip 5pt

\noindent Zeya Mi

\noindent College of Mathematics and Statistics

\noindent Nanjing University of Information Science and Technology, Nanjing 210044, China

\noindent mizeya@163.com

\vskip 5pt

\noindent Dawei Yang

\noindent School of Mathematical Sciences, Center for Dynamical Systems and Differential Equations

\noindent Soochow University, Suzhou, 215006, P.R. China

\noindent yangdw1981@gmail.com, yangdw@suda.edu.cn

\end{document}